\newtheorem{theorem}{Theorem}[section]
\newtheorem{lemma}[theorem]{Lemma}
\newtheorem{corollary}[theorem]{Corollary}
\newtheorem{proposition}[theorem]{Proposition}
\theoremstyle{definition}
\newtheorem{definition}[theorem]{Definition}
\newtheorem{example}[theorem]{Example}
\newtheorem{remark}[theorem]{Remark}
\keywords{group graded ring, epsilon-strongly graded ring, chain conditions, Leavitt path algebra, partial crossed product}
\subjclass[2010]{16P20,16P40,16W50,16S35}
\author{Daniel Lännström}
\address{Department of Mathematics and Natural Sciences,
Blekinge Institute of Technology,
SE-37179 Karlskrona, Sweden}
\email{{\scriptsize daniel.lannstrom@bth.se}}
\date{\today}
\title[Chain conditions for epsilon-strongly graded rings]{Chain conditions for epsilon-strongly graded rings with applications to leavitt path algebras}
\begin{document}


\begin{abstract}
Let $G$ be a group with neutral element $e$ and let $S=\bigoplus_{g \in G}S_g$ be a $G$-graded ring. A necessary condition for $S$ to be noetherian is that the principal component $S_e$ is noetherian. The following partial converse is well-known: If $S$ is strongly-graded and $G$ is a polycyclic-by-finite group, then $S_e$ being noetherian implies that $S$ is noetherian. We will generalize the noetherianity result to the recently introduced class of epsilon-strongly graded rings. We will also provide results on the artinianity of epsilon-strongly graded rings.

As our main application we obtain characterizations of noetherian and artinian Leavitt path algebras with coefficients in a general unital ring. This extends a recent characterization by Steinberg for Leavitt path algebras with coefficients in a commutative unital ring and previous characterizations by Abrams, Aranda Pino and Siles Molina for Leavitt path algebras with coefficients in a field. Secondly, we obtain characterizations of noetherian and artinian unital partial crossed products. 

\end{abstract}

\maketitle

\pagestyle{headings}

\section{Introduction}
\label{sec:1}

Let $R$ be an associative ring equipped with a multiplicative identity $1 \ne 0$ and let $G$ be a group with neutral element $e$. It is straightforward to show that if the group ring $R[G]$ is right (left) noetherian, then $R$ is right (left)  noetherian and $G$ is a right (left) noetherian group, i.e. satisfies the ascending chain condition on right (left) subgroups. In 1954, Hall \cite{hall1954finiteness} proved that if $G$ is polycyclic-by-finite and $R$ is right (left) noetherian, then $R[G]$ is right (left) noetherian. Motivated by this partial result, Bovdi \cite[Sect. 1.148]{filippov1993dniester} asked for a complete converse: If $R$ and $G$ are right (left) noetherian, is $R[G]$ right (left) noetherian? This question was settled in 1989 when Ivanov \cite{ivanov1989group} gave an example of a right noetherian group $G$ such that the group ring $R[G]$ is not right noetherian for any right noetherian base ring $R$, proving that the converse does not hold for general noetherian groups. On the other hand, it is not known if polycylic-by-finite groups is the largest class for which the converse holds. Determining for which groups $G$, noetherianity of $R$ is a sufficient condition for the group ring $R[G]$ to be noetherian is still an open question (see \cite[Sect. 2]{brown2014hopf}). For artinianity, a more definitive answer was obtained by Connell \cite{connell1963group} in 1963. Namely, $R[G]$ is left (right) artinian if and only if $R$ is left (right) artinian and $G$ is finite.

Let $\alpha \colon G \to \text{Aut}(R)$ be a group homomorphism. Recall that the \textit{skew group ring} $R \star_\alpha G$ has the same additive structure as $R[G]$ but the multiplication is skewed by the group action. More precisely, the multiplication of monomials is defined by $(a e_g)(b e_h) = a \alpha_g(b) e_{gh}$. It was proved by Park \cite{park1979artinian} that $R \star_\alpha G$ is left (right) artinian if and only if $R$ is left (right) artinian and $G$ is finite. Skew group rings are examples of so-called algebraic crossed products. Unfortunately, the characterization by Park does not generalize to general crossed products. Passman \cite{passman1970radicals} has given examples of artinian twisted group rings by an infinite $p$-group showing that $G$ being finite is not necessary for a crossed product to be artinian. 

A generalization of crossed products is the notion of a strongly graded ring. Recall that a ring $S$ is \emph{graded} by the group $G$ if $S=\bigoplus_{g \in G} S_g$ for additive subsets $S_g$ of $S$ and $S_g S_h \subseteq S_{gh}$ for all $g,h \in G$. If $S_g S_h = S_{gh}$ for all $g,h \in G$, then $S$ is \textit{strongly graded} by $G$. There is a rigid relation between the \textit{principal component} $R = S_e$ and the whole ring $S$. In particular, it turns out that the characterization of noetherianity by Hall \cite{hall1954finiteness}, generalizes to strongly graded rings. Namely, let $S=\bigoplus_{g \in G} S_g$ be a strongly graded ring where $G$ is a polycyclic-by-finite group and let $R=S_e$ be the principal component. Then $S$ is left (right) noetherian if and only if $R$ is left (right) noetherian. This can be proved using Dade's Theorem (see \cite[Thm. 5.4.8]{nastasescu2004methods}) or with a Hilbert Basis Theorem argument (see \cite[Prop. 2.5]{bell1987localization}). For artinian strongly graded rings, even though Passman's example makes a full generalization impossible, Saorín \cite{saorin1992descending} has obtained some results similar to those of Connell and Park, but with heavy conditions on the principal component. 

\smallskip

Loosely speaking, all of these classical objects: group rings, skew group rings and crossed products are derived from some kind of action of a group $G$ acting on a ring $R$. A recent development has been to consider the partial analogues of these objects, coming from a partial action of $G$ on $R$. We will elaborate more on these objects later in the introduction. 

\begin{figure}[ht]
\begin{tabular}{ c | c }
Classical objects & Generalizations \\
\hline
actions &  partial actions \\
skew group rings & partial skew group rings \\
crossed products & unital partial crossed products  \\
strongly graded rings & epsilon-strongly graded rings 
\end{tabular}
\caption{Classical objects and their partial analogues considered in this paper.}
\label{fig:1}
\end{figure}
The class of \emph{epsilon-strongly} graded rings was introduced by Nystedt, Öinert and Pinedo in \cite{nystedt2016epsilon}. For our purposes, this class is the correct partial analogue of strongly graded rings. We aim to investigate noetherianity and artinianity on the right-hand side in Figure \ref{fig:1}. In this article, we will prove the following characterization of noetherianity.

\begin{theorem}
Let $G$ be a polycyclic-by-finite group and let $S$ be an epsilon-strongly $G$-graded ring. Then $S$ is left (right) noetherian if and only if $R=S_e$ is left (right) noetherian.\label{thm:n}
\end{theorem}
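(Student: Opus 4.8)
The plan is to treat the two implications separately, with the forward (necessity) direction being routine and the converse the substantial part. For necessity, suppose $S$ is left noetherian and show $R=S_e$ is left noetherian by embedding the lattice of left ideals of $R$ into that of $S$. Given a left ideal $I \subseteq R$, the left ideal $SI \subseteq S$ is graded with $(SI)_g = S_g I$, since $S_g I \subseteq S_g S_e \subseteq S_g$; in particular $(SI)_e = S_e I = I$. Thus $I \mapsto SI$ is injective and strictly order-preserving, so a strictly ascending chain in $R$ lifts to one in $S$, and the ACC in $S$ forces the ACC in $R$ (the right-handed statement being symmetric). This half uses neither the epsilon-strong hypothesis nor that $G$ is polycyclic-by-finite.

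For the converse I would induct along a subnormal series $1 = G_0 \triangleleft G_1 \triangleleft \cdots \triangleleft G_n = G$ witnessing that $G$ is polycyclic-by-finite, so that each factor $G_i/G_{i-1}$ is finite or infinite cyclic. Writing $S^{(i)} = \bigoplus_{g \in G_i} S_g$, each $S^{(i)}$ is a $G_i$-graded subring with principal component $R$, and I would prove by induction on $i$ that $R$ left noetherian implies $S^{(i)}$ left noetherian, the case $i=n$ being the theorem. The inductive step views $S^{(i)}$ as graded by $G_i/G_{i-1}$ with principal component the (by induction already noetherian) ring $S^{(i-1)}$, thereby reducing everything to two base cases stated relatively: \textbf{(A)} $S$ epsilon-strongly graded by a finite group, and \textbf{(B)} $S$ epsilon-strongly graded by $\mathbb{Z}$, each with left noetherian principal component.

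A preliminary structural fact underlies both base cases: for an epsilon-strongly graded ring each component $S_g$ is finitely generated projective as a left (and right) $R$-module. Indeed, writing the defining idempotent as $\epsilon_{g^{-1}} = \sum_j c_j d_j$ with $c_j \in S_{g^{-1}}$, $d_j \in S_g$, the identity $x = x\epsilon_{g^{-1}} = \sum_j (x c_j) d_j$ with $x c_j \in S_g S_{g^{-1}} \subseteq R$ exhibits $S_g$ as a direct summand of a finite-rank free left $R$-module. Base case \textbf{(A)} is then immediate: when $G$ is finite, $S = \bigoplus_{g \in G} S_g$ is a finite direct sum of noetherian left $R$-modules, hence a noetherian left $R$-module, and since every left ideal of $S$ is a left $R$-submodule, $S$ satisfies the ACC on left ideals.

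Base case \textbf{(B)} is where I expect the genuine difficulty, and where I would run a Hilbert basis theorem argument adapting \cite[Prop.~2.5]{bell1987localization} to the partial setting. The essential obstruction is that epsilon-strong grading only gives $S_m S_n \subseteq S_{m+n}$, in fact $S_m S_n = \epsilon_m S_{m+n}$ rather than equality, so $S$ need not be generated over $R$ by $S_1$ and $S_{-1}$ and its components carry genuinely new generators in arbitrarily high degree, tracked by the central idempotents $\epsilon_n$. I would analyze a left ideal $L \subseteq S$ through its leading-coefficient modules $L_n = \{\, a_n : a = \sum_{k \le n} a_k \in L \,\} \subseteq S_n$, together with their mirror images in lowest degree, show these are $R$-submodules of the noetherian modules $S_n$, and use the $\epsilon_n$ to organize them into finitely many stabilizing chains, so that noetherianity of $R$ bounds the data needed to generate $L$. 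The same idempotent bookkeeping must also legitimize the inductive reduction, namely that the induced $G_i/G_{i-1}$-grading on $S^{(i)}$ is again epsilon-strong over $S^{(i-1)}$; here I would exploit that $S^{(i-1)}$ is \emph{already} noetherian, so that the ideal generated by the idempotents $\{\epsilon_g : g \in C\}$ attached to a coset $C$ is finitely generated and hence cut out by a single central idempotent $\epsilon_C$. Verifying this compatibility, rather than the Hilbert basis estimate itself, is the step I anticipate will be the most delicate.
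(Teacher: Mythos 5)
Your overall architecture coincides with the paper's: necessity via the graded structure (your $I \mapsto SI$ argument, with $(SI)_e = I$, is a correct proof of the standard fact the paper quotes as Proposition \ref{prop:going_down1}), and sufficiency by induction along the subnormal series, resting on the fact that each $S_g$ is finitely generated projective over $R$ (your dual-basis argument is exactly \cite[Prop.~7]{nystedt2016epsilon}), with the finite-group case handled as in Proposition \ref{prop:finite}. However, your base case \textbf{(B)} --- the heart of the theorem --- is left as an unexecuted sketch, and as described it does not work. You propose to track the top-degree leading-coefficient modules $L_n \subseteq S_n$ and to ``use the $\epsilon_n$ to organize them into finitely many stabilizing chains,'' but the $L_n$ live in \emph{different} ambient modules as $n$ varies, so they never form an ascending chain inside a single noetherian module. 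In Bell's strongly graded argument this is repaired by passing to the ideals $S_{-n}L_n$ of $R$, which are nested because $S_{-(n+1)}S_1 = S_{-n}$; in the epsilon-strong setting one only has $S_{-(n+1)}S_1 = \epsilon_{-(n+1)}S_{-n}$, so nestedness --- the engine of the whole Hilbert-basis argument --- fails, which is precisely the obstruction you yourself identify. Naming the obstruction is not the same as overcoming it, and nothing in your sketch does.

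The paper overcomes it with a different bookkeeping. For a right ideal $I$ it defines $\text{Id}_n(I) \subseteq R$ as the set of degree-zero coefficients of elements of $I$ of \emph{length} at most $n$ whose top degree is $0$; these are honestly nested one-sided ideals of the fixed ring $R$ (Lemma \ref{lem:2}(a)). The epsilon-structure enters in proving that these ideals determine $I$ (Lemma \ref{lem:2}(b)): for a minimal-length $x = \sum_{i=a}^{b} s_i \in I\setminus J$, either $xS_{-b}\not\subseteq J$, which yields an element of $I \setminus J$ of the same minimal length normalized to top degree $0$, or else $x\epsilon_{-b} \in xS_{-b}S_b \subseteq J$ and $x - x\epsilon_{-b} = \sum_{i=a}^{b-1} s_i(1-\epsilon_{-b})$ has strictly smaller length, forcing $x - x\epsilon_{-b} \in J$ and hence $x \in J$, a contradiction. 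A diagonal argument over the chains $\text{Id}_k(I_k)$ then proves Proposition \ref{prop:case2}. Some such idea is indispensable for your proposal to become a proof. A secondary correction: in the inductive reduction you invoke noetherianity of $S^{(i-1)}$ to collapse the ideal generated by $\{\epsilon_g : g \in C\}$ to a single idempotent, but the $\epsilon_g$ are central in $R$, not in $S^{(i-1)}$, so that collapse is not justified there; the argument does work for the ideal $\sum_{g \in C}\epsilon_g R$ of $R$ using that $R$ is noetherian, which is exactly the hypothesis of Proposition \ref{prop:induced_epsilon}. Contrary to your expectation, this reduction step is the routine one; the genuinely delicate step is the $\mathbb{Z}$ case.
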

For artinianity, we obtain the following result:
\begin{theorem}
Let $G$ be a torsion-free group and let $S=\bigoplus_{g \in G} S_g$ be an epsilon-strongly $G$-graded ring. Then $S$ is left (right) artinian if and only if $R=S_e$ is left (right) artinian and $S_g = \{ 0 \}$ for all but finitely many $g \in G$. 
\label{thm:a}
\end{theorem}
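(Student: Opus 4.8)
The plan is to treat the two implications separately, with essentially all of the work concentrated in the forward direction. For the easy direction ($\Leftarrow$), I would use the structural fact (due to Nystedt--Öinert--Pinedo) that in an epsilon-strongly graded ring each component $S_g$ is a finitely generated projective left $R$-module: writing $\epsilon_g=\sum_i a_ib_i$ with $a_i\in S_g$, $b_i\in S_{g^{-1}}$, the identity $s=\epsilon_g s=\sum_i a_i(b_is)$ for $s\in S_g$ exhibits $\{a_i\}$ as a finite generating set with coordinate maps $s\mapsto b_is\in S_{g^{-1}}S_g\subseteq R$. If $S_g=\{0\}$ for all but finitely many $g$, then $S=\bigoplus_g S_g$ is a finite direct sum of finitely generated $R$-modules, hence finitely generated as a left $R$-module, and when $R$ is left artinian this makes $S$ an artinian left $R$-module; since every left ideal of $S$ is in particular a left $R$-submodule, $S$ satisfies the descending chain condition and is left artinian. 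For the necessity of $R$ being left artinian I would invoke the standard graded argument (the ``necessary condition'' of the abstract): for a left ideal $I$ of $R$ the left ideal $SI$ is homogeneous with $e$-component $S_eI=I$, so $SI\cap R=I$, and $I\mapsto SI$ is an order-preserving injection from left ideals of $R$ into left ideals of $S$; thus a strictly descending chain in $R$ produces one in $S$. The right-handed statements are symmetric.

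The heart of the theorem is showing that a left artinian $S$ has finite support, and here I would argue by contradiction. The key leverage is that a left artinian $R$ has only \emph{finitely many} central idempotents (they correspond bijectively to the central idempotents of the semisimple ring $R/J(R)$, central idempotents lifting uniquely modulo the nil ideal $J(R)$). Since each $\epsilon_g$ is a central idempotent of $R$ and $g\in\mathrm{Supp}(S)\iff\epsilon_g\ne 0$, if the support were infinite then, refining by the finitely many values of the pair $(\epsilon_g,\epsilon_{g^{-1}})$, there would be an infinite $F\subseteq\mathrm{Supp}(S)$ on which these idempotents are constant. Exploiting the relations $S_gS_{g^{-1}}=\epsilon_gR$ and $\epsilon_gS_g=S_g=S_g\epsilon_{g^{-1}}$, I would pass to a suitable corner ring $eSe$ assembled from these fixed idempotents; corners of left artinian rings are left artinian, and the point of the construction is that $eSe$ becomes \emph{strongly} graded along $F$, in the sense that its components indexed by $F$ are nonzero and invertible over the new identity component. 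Setting $H=\langle F\rangle$, an infinite torsion-free subgroup of $G$, the subring $\bigoplus_{h\in H}(eSe)_h$ is then strongly $H$-graded.

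The final step localizes to a single infinite-order element. Choosing $g\in F$ of infinite order (available precisely because $G$ is torsion-free), the subring $A=\bigoplus_{n\in\mathbb Z}(eSe)_{g^n}$ is strongly $\mathbb Z$-graded over the left artinian ring $A_0=(eSe)_e$; and because the strongly graded ring $eSe$ is projective, hence faithfully flat, as a right $A$-module (it decomposes into $\langle g\rangle$-cosets, each an invertible $A$-module), the injection $I\mapsto (eSe)I$ transports the descending chain condition down to $A$, so $A$ is left artinian. But a strongly $\mathbb Z$-graded ring is a skew Laurent ring $A_0[x,x^{-1};\sigma]$, and reducing modulo a $\sigma$-invariant ideal (using that $J(A_0)$ is $\sigma$-stable and nilpotent and that $A_0/J(A_0)$ is semisimple) one reaches, up to Morita equivalence, a skew Laurent ring over a division ring. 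Such a ring is a domain in which $x-1$ is a nonzero non-unit, hence not left artinian; as it is a quotient of the artinian ring $A$, this is a contradiction. Therefore $\mathrm{Supp}(S)$ is finite.

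I expect the main obstacle to be the middle reduction: converting ``infinitely many components sharing a common pair of idempotents'' into an honestly strongly graded, still artinian, corner over an infinite torsion-free group. The left/right asymmetry of $\epsilon_g$ and $\epsilon_{g^{-1}}$ acting on the bimodule $S_g$ makes the choice of corner delicate, and verifying that the strongly graded subring inherits artinianity (through projectivity over coset-subrings) is where the strong-grading machinery really does the work. Torsion-freeness enters twice in an essential way: to guarantee an element of infinite order, so that the resulting $\mathbb Z$-grading is genuinely infinite, and to exclude the Passman-type artinian twisted group rings over infinite torsion groups which show that the hypothesis cannot be dropped.
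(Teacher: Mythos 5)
Your treatment of the two easy parts is correct and essentially identical to the paper's: sufficiency via finite generation of each $S_g$ as a left $R$-module (the paper's Propositions \ref{prop:2} and \ref{prop:suff}) and necessity of $R$ being artinian via the order-preserving injection $I \mapsto SI$ with $SI \cap R = I$ (the paper's Proposition \ref{prop:artinian}). But the heart of your argument --- deducing finite support from artinianity --- contains a genuine gap, and it is exactly the point where the paper proceeds differently: it does not prove this implication from the epsilon-strong structure at all, but invokes a general theorem (Theorem \ref{thm:big}, i.e.\ \cite[Thm. 9.6.1]{nastasescu2004methods}) valid for \emph{arbitrary} $G$-graded rings over torsion-free groups, whose proof rests on Bergman's observation that the Jacobson radical of a $\mathbb{Z}$-graded ring is a graded ideal.

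The gap is this: your pigeonhole step produces an infinite set $F \subseteq \mathrm{Supp}(S)$ on which $(\epsilon_g,\epsilon_{g^{-1}})$ is constant, but $F$ is not a subgroup, and the support of an epsilon-strongly graded ring is \emph{not closed under multiplication}. Hence there is no reason why $(eSe)_{g^n}$ should be nonzero for $n \geq 2$, let alone why $A=\bigoplus_{n}(eSe)_{g^n}$ should be strongly $\mathbb{Z}$-graded: the equality $(eSe)_{g^n}(eSe)_{g^{-n}}=(eSe)_e$ is precisely what fails, so the contradiction with artinianity never materializes. A concrete illustration: the Leavitt path algebra of the graph with a single edge $f\colon v\to w$ is epsilon-strongly $\mathbb{Z}$-graded with $S_1=Rf\neq \{0\}$ and $\epsilon_1=ff^*\neq 0$, yet $S_2=S_1S_1=\{0\}$ because $ff=f(wv)f=0$; so a nonvanishing epsilon in degree $1$ says nothing about degree $2$. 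This is the essential difference between epsilon-strong and strong gradings, and it is why the hard direction cannot be localized to a cyclic subgroup in the way you propose. There is also a secondary error: a strongly $\mathbb{Z}$-graded ring need not be a skew Laurent ring $A_0[x,x^{-1};\sigma]$; that requires a unit in the degree-one component (i.e.\ a crossed product), and invertible bimodules need not be free --- consider $\bigoplus_{n\in\mathbb{Z}} I^n$ for a non-principal invertible ideal $I$ of a Dedekind domain. Over an artinian base this particular point can be repaired, but it is not automatic, whereas the first gap is structural: to close it you would need the actual content of \cite[Thm. 9.6.1]{nastasescu2004methods}, which handles arbitrary gradings precisely because no strong-grading structure along subgroups is available.
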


\smallskip

Partial actions were first introduced by Exel \cite{exel1994circle} in the early 1990s to study $C^*$-algebras. For a survey of the history of partial actions, see \cite{dokuchaev2011partial,batista2016partial}. Later, partial actions on a ring $R$ were considered by Dokuchaev and Exel in \cite{dokuchaev2005associativity} to construct partial skew group rings as analogues of the classical skew group rings. The partial skew group ring of an arbitrary partial action is not necessarily associative (see \cite[Expl. 3.5]{dokuchaev2005associativity}).  Analogously, twisted partial actions were considered in \cite{dokuchaev2008crossed} and shown to give rise to partial crossed products. 

Among the partial crossed products originally considered by Dokuchaev and Exel, the subclass coming from so-called unital twisted partial actions (see Section 5 for a definition) was considerd in \cite{bagio2010crossed} and shown to be especially well-behaved. For instance, these crossed products are always associative and unital algebras (see Section 5). In the sequel, this type of crossed products will simply be called \emph{unital partial crossed products}.

\smallskip

Studying partial skew group rings, Carvalho, Cortes and Ferrero \cite{carvalho2011partial} obtained the following generalization from the classical setting. 
\begin{theorem}(\cite[Cor. 3.4]{carvalho2011partial})
Let $\alpha$ be a unital partial action of a polycyclic-by-finite group $G$ on a ring $R$. If $R$ is right (left) noetherian, then the partial skew group ring $R \star_\alpha G$ is right (left) noetherian.
\label{thm:partial_skew_groups_noeth}
\end{theorem}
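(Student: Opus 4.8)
The plan is to induct on the length $n$ of a subnormal series $1 = G_0 \triangleleft G_1 \triangleleft \cdots \triangleleft G_n = G$ in which every factor $G_{i+1}/G_i$ is either infinite cyclic or finite; such a series exists precisely because $G$ is polycyclic-by-finite. Throughout, recall that for a unital partial action the partial skew group ring $S := R \star_\alpha G = \bigoplus_{g \in G} D_g\delta_g$ is an associative unital ring with $1_S = 1_R\delta_e$, it is $G$-graded with principal component $S_e = R\delta_e \cong R$, and for every subgroup $H \le G$ the restriction $\alpha|_H$ is again a unital partial action with $R \star_{\alpha|_H} H = \bigoplus_{h \in H} D_h\delta_h$ a subring of $S$. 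The induction reduces the theorem to two base cases, $G$ finite and $G \cong \mathbb{Z}$, which I treat first.

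First I would dispose of the case where $G$ is finite. Here $S = \bigoplus_{g\in G} D_g\delta_g$ is a finite direct sum, and each summand $D_g\delta_g$ is a right $R$-submodule of $S$: the multiplication rule gives $(a\delta_g)(r\delta_e) = \alpha_g(\alpha_{g^{-1}}(a)r)\delta_g \in D_g\delta_g$ for $a \in D_g$, $r \in R$, and through the isomorphism $\alpha_{g^{-1}} \colon D_g \to D_{g^{-1}}$ this module is isomorphic to the ideal $D_{g^{-1}}$ with its standard right $R$-module structure. Since $R$ is right noetherian, each $D_{g^{-1}}$ is a noetherian right $R$-module, hence so is the finite direct sum $S$. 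As $R$ sits inside $S$ as a unital subring, every right ideal of $S$ is in particular a right $R$-submodule, so $S$ satisfies the ascending chain condition on right ideals. The left-handed statement is symmetric.

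The essential case is $G \cong \mathbb{Z} = \langle t\rangle$, where I would run a skew Hilbert basis argument adapted to the partial grading. Writing $\delta := \delta_t$, consider the partial skew polynomial subring $P = \bigoplus_{n \ge 0} D_n\delta_n \subseteq S$. Given a right ideal $I \le P$, for each $m \ge 0$ let $\Lambda_m$ be the set of leading coefficients of the elements of $I$ of degree $\le m$, together with $0$; using the idempotents $1_n$ that cut out the ideals $D_n$ one checks that each $\Lambda_m$ is a right ideal of $R$ and that $\Lambda_0 \subseteq \Lambda_1 \subseteq \cdots$, so by noetherianity of $R$ this chain stabilizes and each $\Lambda_m$ is finitely generated; the standard bookkeeping then produces a finite generating set for $I$, proving $P$ right noetherian. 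Finally one passes from $P$ to $S$. Note that $\delta$ is \emph{not} invertible in $S$ (its partial nature makes $\delta_t\delta_{t^{-1}}$ merely the idempotent $1_t$), so $S$ is not a localization of $P$ and one cannot simply invoke a skew-Laurent argument; instead I would combine the leading-coefficient analysis in positive degrees with the symmetric analysis of the negative-degree partial skew polynomial ring $\bigoplus_{n \le 0} D_n\delta_n$, filtering by both top and bottom degree. I would keep globalization in reserve as an alternative, but the enveloping ring of a partial $\mathbb{Z}$-action is an infinite sum of copies of ideals of $R$ and need not be noetherian, so corner-ring arguments do not apply directly; this is why the direct Hilbert-basis approach seems necessary.

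For the inductive step, suppose the result is known for all unital partial skew group rings over groups admitting a shorter series, and let $N = G_{n-1} \triangleleft G$ with $G/N$ either infinite cyclic or finite. The key is a transitivity (tower) isomorphism realizing $S = R \star_\alpha G$ as a unital partial crossed product of $G/N$ over the subring $A := R \star_{\alpha|_N} N$: one collects the homogeneous components over a coset $gN$ into a single $A$-bimodule, obtaining a $G/N$-grading of $S$ whose principal component is $A$ and which is of partial-crossed-product type. By the induction hypothesis $A$ is right noetherian, and since $G/N$ is cyclic or finite the base cases — applied with $A$ in place of $R$ — show that $S$ is right noetherian, completing the induction. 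I expect this tower construction to be the main obstacle: unlike global actions, partial actions do not descend to quotient groups in an obvious way, so producing the induced (possibly twisted) partial action of $G/N$ on $A$, and verifying that it returns $S$, requires a careful choice of coset representatives and a check of the crossed-product axioms. Correspondingly, the two base cases should be established in the slightly broader generality of unital partial crossed products (equivalently, $\mathbb{Z}$- or finitely-graded epsilon-strongly graded rings) so that the induction closes up.
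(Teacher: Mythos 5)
Your overall skeleton --- regard $R \star_\alpha G$ as a unital $G$-graded ring with principal component $R$, then induct on a subnormal series whose factors are finite or infinite cyclic, treating those two cases as the base of the induction --- is exactly the skeleton of the paper's argument: the paper deduces this statement from Theorem \ref{thm:n} via Corollary \ref{thm:crossed_noeth}, after observing that a unital partial action with trivial twist $w_{g,h}=1_g 1_{gh}$ makes $R \star_\alpha G$ a unital partial crossed product, hence epsilon-strongly graded. Your finite-group case is correct. But both load-bearing technical steps have genuine gaps. For $G \cong \mathbb{Z}$: you prove the positive part $P=\bigoplus_{n \ge 0} D_n \delta_n$ is right noetherian and correctly note that $\delta$ is not invertible, so no localization argument passes this to $S$; but your substitute (``filtering by both top and bottom degree'') is a gesture, not a proof. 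The paper instead runs the leading-coefficient analysis directly on right ideals $I$ of the whole ring $S$: $\text{Id}_n(I)$ collects the degree-zero coefficients of elements of $I$ supported in degrees $-n+1, \dots, 0$, and the crux (Lemma \ref{lem:2}) is that a minimal-length element $x \in I \setminus J$ with top degree $b$ can be normalized to have top degree $0$. If $x S_{-b} \not\subseteq J$ this is immediate; in the problematic case $x S_{-b} \subseteq J$ one uses the local unit $\epsilon_{-b} \in S_{-b}S_b$ (for the partial skew group ring, $\epsilon_{-b} = 1_{-b}\delta_e$): since the top component satisfies $s_b \epsilon_{-b} = s_b$, the element $x - x\epsilon_{-b} \in I$ has strictly smaller length, hence lies in $J$ by minimality, forcing $x \in J$, a contradiction. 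This use of the idempotents $1_n$ as a surrogate for invertibility of $\delta$ is precisely the idea missing from your sketch; without it the passage from $P$ (or from the two halves) to $S$ is unproven.

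Second, your inductive step rests on a tower isomorphism exhibiting $S$ as a unital partial crossed product of $G/N$ over $A = R \star_{\alpha|_N} N$, which you flag as delicate but expect to follow from a careful choice of coset representatives. It does not: the induced coset grading of a partial crossed product need not be of crossed-product type, and your parenthetical that unital partial crossed products are ``equivalently'' epsilon-strongly graded rings is false --- epsilon-strong gradings are strictly more general. What is actually true, and what the paper uses, is the weaker statement that the induced $G/N$-grading is epsilon-strong (Proposition \ref{prop:induced_epsilon}, quoted from \cite{induced2018}), \emph{and even this requires the hypothesis that the principal component is noetherian}; it can fail for general epsilon-strongly graded rings. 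So the induction must be closed in the class of epsilon-strongly graded rings with noetherian principal component, with both base cases established at that level of generality. Your final sentence points in this direction, but the enabling result for the inductive step is neither stated correctly nor proved in your proposal, and it is not a routine verification.
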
 

Using the techniques in this paper, we will extend their result to unital partial crossed products (see Corollary \ref{thm:crossed_noeth}).  


\subsection{Leavitt path algebras}

The Leavitt path algebra is an algebra attached to a directed graph $E$. Surprisingly, the Leavitt path algebra of a finite graph is naturally epsilon-strongly $\mathbb{Z}$-graded (see \cite{nystedt2017epsilon}). Leavitt path algebras were first introduced in \cite{abrams2005leavitt} as an algebraic analogue of graph $C^*$-algebras. The monograph \cite{abrams2017leavitt} by Abrams, Ara and Siles Molina is a comprehensive general reference. The main focus of the research has been Leavitt path algebras with coefficients in a field. In \cite{tomforde2009leavitt}, Leavitt path algebras over a unital commutative ring were considered. They have been further studied in \cite{larki2015ideal} and \cite{katsov2017simpleness}. In this paper we will follow \cite{hazrat2013graded}, \cite{nystedt2017epsilon} and consider Leavitt path algebras with coefficients in an arbitrary unital ring.  

In the case of Leavitt path algebras with coefficients in a field, there has been much research connecting properties of the underlying graph with algebraic properties of the corresponding algebra. One particular question is how finiteness conditions on the algebra relates to finiteness conditions on the graph. In \cite{abrams2008locally} it is shown that the Leavitt path algebra of a finite graph $E$ is noetherian if and only if the graph $E$ does not have any cycles with exits. Furthermore, it was proven in \cite{ABRAMS2007753} that the Leavitt path algebra is artinian if and only if the graph is finite acyclic. Another proof of this result is given in \cite{NYSTEDT2018433}. 

Noetherianity and artianity of Leavitt path algebras with coefficients in a commutative ring has been characterized by Steinberg \cite{steinberg2018chain}. Using a different technique, we will extend his characterization to Leavitt path algebras with cofficients in a general unital ring. This also generalizes previous results by Abrams, Aranda Pino and Siles Molina, cf. \cite{abrams2008locally,abrams2010chain,ABRAMS2007753,abrams2017leavitt}.

\begin{theorem}
Let $E$ be a directed graph and let $R$ be a unital ring. Consider the Leavitt path algebra $L_R(E)$ with coefficients in $R$. The following assertions hold:
\begin{enumerate}[(a)]
\begin{item}
$L_R(E)$ is a left (right) noetherian unital ring if and only if $E$ is finite and satisfies Condition (NE) and $R$ is left (right) noetherian.
\end{item}
\begin{item}
$L_R(E)$ is a left (right) artinian unital ring if and only if $E$ is finite acyclic and $R$ is left (right) artinian.
\end{item}
\begin{item}
If $L_R(E)$ is a semisimple unital ring, then $E$ is finite acyclic and $R$ is semisimple. Conversely, if $R$ is semisimple with $n \cdot 1_R$ invertible for every integer $n \ne 0$ and $E$ is finite acyclic, then $L_R(E)$ is a semisimple unital ring. 
\end{item}
\end{enumerate}
\label{thm:s}
\end{theorem}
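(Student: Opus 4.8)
The plan is to reduce Theorem~\ref{thm:s} to the general graded results, Theorems~\ref{thm:n} and~\ref{thm:a}, via the epsilon-strong $\mathbb{Z}$-grading on $L_R(E)$ established in \cite{nystedt2017epsilon}. The crucial observation is that this grading exists precisely when $E$ is finite, so finiteness of the graph must be handled as a separate, preliminary necessary condition before the graded machinery applies. First I would argue that if $L_R(E)$ is left (right) noetherian or artinian, then $E$ must be finite: an infinite vertex set would produce an infinite collection of orthogonal idempotents (the vertex idempotents $v$), yielding a strictly ascending chain of left ideals $\bigoplus_{i=1}^n L_R(E)v_i$, and an infinite edge set emanating from or terminating at a single vertex would similarly violate the chain conditions. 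This step is essentially a direct idempotent-counting argument and I expect it to be routine, though one must be careful that $L_R(E)$ need not be unital when $E$ is infinite, so the chains should be phrased in terms of the local units.

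Once $E$ is known to be finite, $L_R(E)$ is unital and epsilon-strongly $\mathbb{Z}$-graded with principal component $L_R(E)_0$. The group $\mathbb{Z}$ is both polycyclic-by-finite and torsion-free, so Theorems~\ref{thm:n} and~\ref{thm:a} both apply directly. For part~(a), Theorem~\ref{thm:n} tells us that $L_R(E)$ is left (right) noetherian if and only if the principal component $L_R(E)_0$ is. The remaining work is to identify $L_R(E)_0$ concretely and to show that, for a finite graph, $L_R(E)_0$ is noetherian if and only if $R$ is noetherian and $E$ satisfies Condition~(NE) (no cycles with exits). I would compute $L_R(E)_0$ as a direct limit (or, for a finite graph, a suitable finite direct sum) of matrix algebras over $R$ indexed by paths, and then translate the chain condition on this component into the graph-theoretic Condition~(NE). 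The presence of a cycle with an exit is what produces an infinite ascending chain, mirroring the field-coefficient arguments of \cite{abrams2008locally}; since matrix rings over $R$ inherit noetherianity from $R$, the base ring condition comes along for free.

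For part~(b), Theorem~\ref{thm:a} says that for the torsion-free group $\mathbb{Z}$, $L_R(E)$ is left (right) artinian if and only if $L_R(E)_0$ is artinian \emph{and} only finitely many homogeneous components are nonzero. The second condition, $S_g = \{0\}$ for all but finitely many $g$, is the decisive graph-theoretic constraint: I expect to show that $L_R(E)_n \ne 0$ for infinitely many $n \in \mathbb{Z}$ exactly when $E$ contains a cycle, so the finiteness of the support of the grading is equivalent to $E$ being acyclic. Combined with the finiteness of $E$ already established, this yields ``$E$ finite acyclic.'' Simultaneously, when $E$ is finite acyclic the principal component $L_R(E)_0$ becomes a finite direct sum of matrix algebras over $R$, which is artinian if and only if $R$ is artinian; this supplies the base-ring condition and closes the equivalence.

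For part~(c), semisimplicity implies both left and right artinian, so part~(b) forces $E$ finite acyclic and $R$ semisimple (a semisimple quotient or corner of $L_R(E)$ carries the semisimplicity down to $R$). For the converse, with $E$ finite acyclic the algebra $L_R(E)$ is isomorphic to a finite direct sum of matrix algebras over $R$, and semisimplicity of such a direct sum follows from semisimplicity of $R$ provided the relevant integers $n \cdot 1_R$ are invertible, which is exactly the hypothesis imposed; the invertibility condition is needed to ensure the Leavitt relations do not introduce nilpotents or obstruct the Wedderburn decomposition. The main obstacle throughout will be the explicit identification of the principal component $L_R(E)_0$ for a finite graph and the faithful translation of its chain conditions into Condition~(NE) and acyclicity; this is where the graph combinatorics genuinely enters, the graded theorems having already absorbed all the difficulty of relating $S$ to $S_e$.
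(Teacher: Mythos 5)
For parts (a) and (b) your plan is correct and is essentially the paper's own proof. The paper likewise extracts finiteness of $E$ and Condition (NE) from orthogonal-idempotent counting (Lemma \ref{lem:e_finite}, Proposition \ref{prop:idem_exist}), identifies $(L_R(E))_0$ with a finite direct product of matrix rings over $R$ via the filtration $D_n$ (Proposition \ref{prop:fin_gen}, Theorem \ref{thm:dn}, Corollary \ref{cor:ne_art}), and then feeds the canonical epsilon-strong $\mathbb{Z}$-grading into Theorems \ref{thm:n} and \ref{thm:a} (Corollary \ref{cor:lpa_noeth}, Proposition \ref{prop:lpa_left}); your ``infinite support iff $E$ has a cycle'' step is just a rephrasing of the paper's infinite-path argument, and your forward direction of (c) (descending semisimplicity to $R$ via a corner at a sink rather than via the principal component and Morita equivalence) is a valid minor variant.

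The gap is in the converse of (c). You assert that for $E$ finite acyclic, $L_R(E)$ is isomorphic to a finite direct sum of matrix algebras over $R$. Nothing in the paper provides this (Theorem \ref{thm:dn} only describes the pieces $D_n$ of the degree-zero component, not the whole algebra), and you do not prove it; it is the entire content of the converse. Worse, your stated role for the hypothesis that $n \cdot 1_R$ be invertible --- that it is ``needed to ensure the Leavitt relations do not introduce nilpotents or obstruct the Wedderburn decomposition'' --- is fabricated: if your decomposition claim held (it is in fact true over any unital $R$, by the usual CK2-expansion into paths ending at sinks), then semisimplicity of $L_R(E)$ would follow from semisimplicity of $R$ with \emph{no} invertibility hypothesis at all, consistent with the paper's remark that the hypothesis is not necessary. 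The paper needs the hypothesis because it takes a genuinely different route: it lifts semisimplicity from $(L_R(E))_0$ to $L_R(E)$ using Theorem \ref{thm:semisimple}, whose hypothesis is that the trace $\text{tr}_\gamma(\epsilon_0) = \sum_i \epsilon_i$ be invertible in $(L_R(E))_0$, and verifying that invertibility is Lemma \ref{lem:invertible} --- a long explicit computation with the idempotents $\epsilon_i$, and the one place where invertibility of $n \cdot 1_R$ is actually used. So the single substantial technical ingredient of the paper's proof of (c) has no counterpart in your proposal: to make your route rigorous you must prove the matrix decomposition of $L_R(E)$ over a general unital ring (in which case you would in fact obtain a stronger statement than Theorem \ref{thm:s}(c)), or else follow the paper through Lemma \ref{lem:invertible} and Theorem \ref{thm:semisimple}.
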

\begin{remark}
Steinberg \cite{steinberg2018chain} proves a complete characterization of semisimple Leavitt path algebras with coefficients in a commutative ring. Our assumption in Theorem \ref{thm:s}(c) that $n \cdot 1_R$ is invertible for every $n \ne 0$ is not a necessary condition. 
\end{remark}

Most of these previous studies do not consider the additional structure on the Leavitt path algebra coming from the grading. Here we will follow the approach first taken by Hazrat \cite{hazrat2013graded} and focus on the graded structure. The key point realized by Nystedt and Öinert \cite{nystedt2017epsilon} is that the Leavitt path algebra of a finite graph has a canonical epsilon-strong $\mathbb{Z}$-grading. Since $\mathbb{Z}$ is both torsion-free and polycyclic-by-finite, we can apply our general theorems for epsilon-strongly graded rings.

\smallskip

In Section \ref{sec:2}, we give miscellaneous preliminaries that are needed later on.


In Section \ref{sec:3}, we prove our main results: Theorem \ref{thm:n} and Theorem \ref{thm:a}. In the first part of the section, Theorem \ref{thm:n} is proved by considering the special cases: finite $G$ and infinitely cyclic $G$. Furthermore, Theorem \ref{thm:a} is essentially a consequence of Bergman's famous observation that the Jacobson radical of a $\mathbb{Z}$-graded ring is a graded ideal. 

In Section \ref{sec:4}, we apply the investigation of chain conditions on epsilon-strongly graded rings to Leavitt path algebras with coefficients in a unital ring. Indeed, Corollary \ref{cor:lpa_noeth}, Proposition \ref{prop:lpa_left} and Corollary \ref{cor:full_semi}  generalize previous characterizations of noetherian, artinian and semisimple Leavitt path algebras of a finite graph with coefficients in a field, cf. \cite{abrams2008locally,abrams2010chain,ABRAMS2007753,abrams2017leavitt}. Finally, we prove Theorem \ref{thm:s}, generalizing the characterization given by Steinberg \cite{steinberg2018chain}.

In Section \ref{sec:5}, we apply Theorem \ref{thm:n} and Theorem \ref{thm:a} to unital partial crossed products and obtain characterizations of noetherian and artinian unital partial crossed products: Corollary \ref{thm:crossed_noeth} and Corollary \ref{thm:crossed_artinian}. We will show that Corollary \ref{thm:crossed_noeth} generalizes Theorem \ref{thm:partial_skew_groups_noeth} (cf. \cite[Cor. 3.4]{carvalho2011partial}).

\section{Preliminaries}
\label{sec:2}

In the following we fix a group $G$ with neutral element $e$. Our rings, unless otherwise stated, will be associative and equipped with a multiplicative identity $1 \ne 0$. A ring $S$ is called \emph{$G$-graded} or \emph{graded of type $G$}, if there exists a family $\{ S_g \}_{g \in G}$ of additive subgroups of $S$ such that (i) $S=\bigoplus_{g \in G} S_g$ and (ii) $S_g S_h \subseteq S_{gh}$ for all $g,h \in G$. The additive subgroups $S_g$ are called \textit{homogeneous components} and the elements of $S$ contained in some $S_g$, are called the \textit{homogeneous elements}. The homogeneous component $S_e$ is called the \textit{principal component} of $S$ and will usually be denoted by $R$. The \textit{support} of a $G$-grading $\text{Supp}(S)$ is the set of $g \in G$ such that $S_g \ne \{ 0 \}$. An element in $ x \in S$ decomposes uniquely as a finite sum $x=\sum s_g$ where the $s_g$'s are homogeneous elements. The \emph{support} of an element $x \in S$, denoted by $\text{Supp}(x)$, is the finite set such that $g \in \text{Supp}(x) \iff s_g \ne 0$ in the decomposition of $x$. 

\smallskip

The following definition was introduced \cite[Def. 4.5]{clark2018generalized} in the study of Steinberg algebras.

\begin{definition}
A $G$-graded ring $S=\bigoplus_{g \in G}S_g$ is called \textit{symmetrically graded} if,
\begin{equation}
S_g S_{g^{-1}} S_g  = S_g, \hspace{5em} \forall {g \in G}.
\label{eq:1}
\end{equation}
\label{def:symmetric}
\end{definition}
We point out that strong gradings are symmetric. Interestingly, the Leavitt path algebra of any graph is symmetrically graded (see \cite[Prop. 3.2]{nystedt2017epsilon}). An easy example of a grading that is not symmetric is the standard $\mathbb{Z}$-grading on the polynomial ring $R[X]$, i.e. $S_n = \{ 0 \}$ for $n<0$ and $S_n=R X^n$ for $n \geq 0$. 

\begin{remark}
Let $S$ be a symmetrically graded ring. If $S_g S_{g^{-1}} = \{ 0 \}$ for some $g \in G$, then $ \{0 \} = S_{g^{-1}} S_g S_{g^{-1}} = S_g S_{g^{-1}} S_g,$ hence by (\ref{eq:1}), we get that $S_g = S_{g^{-1}} = \{ 0 \}$. Thus, $g \in \text{Supp}(S) \iff g^{-1} \in \text{Supp}(S).$ 
\end{remark}

\begin{example}
Let $R$ be a ring. An easy check shows that $R[X^2, X^{-2}]= \bigoplus_{n \in \mathbb{Z}} S_n$ where $S_{2k+1}=\{ 0 \}$ and $S_{2k} = R X^{2k}$ is a symmetrical grading. Since $S_{2k+1} S_{-2k-1} = \{ 0 \} \ne R$, the grading is not strong. 
\label{ex:sym}
\end{example}

\smallskip

If $S$ is a $G$-graded ring with principal component $R$, then $S_g S_{g^{-1}}$ is an $R$-ideal for each $g \in G$. The following definition was introduced by Nystedt, Öinert and Pinedo.

\begin{definition}
(\cite[Def. 4 and Prop. 7]{nystedt2016epsilon})
A $G$-graded ring $S=\bigoplus_{g \in G} S_g$ with principal component $R=S_e$ is called \textit{epsilon-strongly graded} if, 
\begin{enumerate}[(a)]
\begin{item}
the grading is symmetrically graded, and,
\end{item}
\begin{item}
$S_g S_{g^{-1}}$ is a unital $R$-ideal for each $g \in G$. 
\end{item}
\end{enumerate}
\label{def:epsilon}
\end{definition}

It is straightforward to see that the ring in Example \ref{ex:sym} is epsilon-strongly graded. We point out that Definition \ref{def:epsilon} is equivalent to the following conditions: (i) $S$ being $G$-graded and (ii) for each $g \in G$, there exists some $\epsilon_g \in S_g S_{g^{-1}}$ such that for all $x \in S_g$,  $ x= \epsilon_g x = x \epsilon_{g^{-1}}$ (see \cite[Prop. 7]{nystedt2016epsilon}).

\bigskip

We can restrict an epsilon-strong grading to a subgroup of the grading group. This process gives us both a new grading and a new ring. The proof of the following proposition is straightforward and left to the reader.

\begin{proposition}
Let $S=\bigoplus_{g \in G} S_g$ be an epsilon-strongly $G$-graded ring with principal component $R=S_e$. Let $H$ be a subgroup of $G$. Then $S(H)=\bigoplus_{g \in H} S_g$ is an epsilon-strongly $H$-graded ring with principal component $S=S_e$.
\label{prop:half}
\end{proposition}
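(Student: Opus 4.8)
The plan is to verify, in turn, that $S(H)=\bigoplus_{g \in H} S_g$, regarded as a subset of $S$ equipped with the inherited addition and multiplication, is (i) a unital subring of $S$, (ii) an $H$-graded ring, and (iii) epsilon-strongly graded. Throughout, the point to keep in view is that $H$ being a \emph{subgroup} — closed under products and inverses and containing $e$ — is exactly what forces every ingredient to remain inside $S(H)$.

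First I would check that $S(H)$ is a subring of $S$ containing the identity. Closure under addition and subtraction is immediate, since $S(H)$ is a direct sum of the additive subgroups $S_g$, $g \in H$. For closure under multiplication, observe that for $g,h \in H$ we have $S_g S_h \subseteq S_{gh}$, and $gh \in H$ because $H$ is a subgroup; hence products of elements of $S(H)$ again lie in $S(H)$. For the identity, recall the standard fact that in any unital $G$-graded ring one has $1_S \in S_e$: writing $1_S=\sum_g u_g$ and comparing homogeneous components of $u_g = 1_S u_g$ forces $u_g=0$ for $g \ne e$. Since $e \in H$, we get $1_S \in S_e \subseteq S(H)$, and as $1_S$ is the identity of the ambient ring it is a fortiori a two-sided identity for the subring $S(H)$. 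Thus $S(H)$ is a unital ring with $1_{S(H)} = 1_S = 1_R$.

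Next, the decomposition $S(H)=\bigoplus_{g \in H} S_g$ exhibits $S(H)$ as $H$-graded: the direct-sum property and the inclusions $S_g S_h \subseteq S_{gh}$ (now for $g,h \in H$) are inherited directly from the $G$-grading of $S$, and the principal component of this $H$-grading is $S(H)_e = S_e = R$, since $e \in H$. Finally, to see that the $H$-grading is epsilon-strong I would invoke the characterization recorded after Definition \ref{def:epsilon}: it suffices to produce, for each $g \in H$, an element $\epsilon_g \in S_g S_{g^{-1}}$ with $x=\epsilon_g x = x \epsilon_{g^{-1}}$ for all $x \in S_g$ (see \cite[Prop.~7]{nystedt2016epsilon}). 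These data are already supplied by the epsilon-strong $G$-grading of $S$, which furnishes such an $\epsilon_g$ for \emph{every} $g \in G$, in particular for every $g \in H$. Since $H$ is a subgroup, $g^{-1}\in H$ whenever $g\in H$, so both $S_g$ and $S_{g^{-1}}$ are summands of $S(H)$; moreover $S_g S_{g^{-1}}$ is computed by the same multiplication whether inside $S$ or inside $S(H)$, so $\epsilon_g \in S_g S_{g^{-1}} \subseteq S(H)$ and the identities $x=\epsilon_g x = x \epsilon_{g^{-1}}$ hold verbatim. Hence $S(H)$ is epsilon-strongly $H$-graded.

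There is no genuine obstacle here — which is precisely why the statement can be left to the reader. The only points requiring a moment's care are that $S(H)$ inherits a multiplicative identity (relying on $1_S \in S_e$ together with $e \in H$) and that the relevant products and $\epsilon$-elements stay inside $S(H)$; both hinge solely on $H$ being closed under multiplication and inversion. One could equally well bypass the $\epsilon_g$-characterization and verify Definition \ref{def:epsilon} directly, checking that the symmetry relation $S_g S_{g^{-1}} S_g = S_g$ and the unital-$R$-ideal property of $S_g S_{g^{-1}}$ for $g \in H$ are literally the same statements as in $S$, restricted to indices lying in $H$.
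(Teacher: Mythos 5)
Your proposal is correct: the paper itself gives no argument here (it explicitly declares the proof ``straightforward and left to the reader''), and your verification is exactly the routine check that is intended — closure of $H$ under products and inverses keeps the grading, the identity $1_S\in S_e$, and the elements $\epsilon_g\in S_gS_{g^{-1}}$ all inside $S(H)$, where the relevant products are computed the same way as in $S$. The only loose point is cosmetic: comparing components of $u_g=1_S u_g$ gives $u_hu_g=0$ for $h\ne e$, and one still needs $u_h=u_h1_S=\sum_g u_hu_g=0$ to conclude $u_h=0$ for $h\ne e$; this is the standard argument and does not affect correctness.
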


On the other hand, if $N$ is a normal subgroup of $G$, then there is a way to assign a new, induced $G/N$-grading to the same ring $S$. Indeed, let $S=\bigoplus_{g \in G}S_g $ be a $G$-graded ring with principal component $R$. The induced $G/N$-grading is given by $S=\bigoplus_{C \in G/N} S_C$ where $S_C=\bigoplus_{g \in C} S_g$. In particular, the new principal component is $S_{[e]}=\bigoplus_{g \in N} S_g = S(N)$.  One can show that the induced $G/N$-grading is strong if the $G$-grading of $S$ is strong (see \cite[Prop. 1.2.2]{nastasescu2004methods}). 

In the proof of Theorem \ref{thm:n} we will need the following proposition for the reduction step. 

\begin{proposition}(\cite[Prop. 5.4]{induced2018}, cf. \cite[Prop. 2.3]{bell1987localization})
Let $S$ be an epsilon-strongly $G$-graded ring with principal component $R=S_e$. If $R$ is left or right noetherian, then for any normal subgroup $N$ of $G$, the induced $G/N$-grading of $S$ is epsilon-strong.
\label{prop:induced_epsilon}
\end{proposition}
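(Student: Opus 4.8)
The plan is to verify the reformulation of epsilon-strongness recorded just after Definition \ref{def:epsilon}, namely \cite[Prop. 7]{nystedt2016epsilon}. Since the induced decomposition $S=\bigoplus_{C \in G/N} S_C$ is a genuine $G/N$-grading, it suffices to produce, for each coset $C \in G/N$, an element $\epsilon_C \in S_C S_{C^{-1}}$ such that $x = \epsilon_C x = x \epsilon_{C^{-1}}$ for every $x \in S_C$.

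First I would recall the homogeneous local units of the given $G$-grading. For each $g \in G$ the ideal $S_g S_{g^{-1}}$ is a unital $R$-ideal; write $\epsilon_g$ for its identity. Being the identity of a two-sided ideal of $R$, the element $\epsilon_g$ is a central idempotent of $R$, and $S_g S_{g^{-1}} = R\epsilon_g$. From $x = \epsilon_g x = x\epsilon_{g^{-1}}$ for $x \in S_g$ I retain the essential fact that $\epsilon_g$ is a two-sided homogeneous unit for $S_g$.

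Next comes the step that invokes the hypothesis. Fix $C \in G/N$ and consider the ideal $J_C = \sum_{g \in C} R\epsilon_g$ of $R$. This is the ascending union of the ideals $\sum_{g \in F} R\epsilon_g$ taken over finite subsets $F \subseteq C$, so the assumption that $R$ is left or right noetherian forces the chain to stabilize, giving $J_C = \sum_{g \in F_0} R\epsilon_g$ for some finite $F_0 \subseteq C$. Because the $\epsilon_g$ are central idempotents, I may set $\epsilon_C = 1 - \prod_{g \in F_0}(1 - \epsilon_g)$; this is a central idempotent which is the identity element of $J_C$, whence $\epsilon_C \epsilon_g = \epsilon_g \epsilon_C = \epsilon_g$ for every $g \in C$ (each such $\epsilon_g$ lies in $J_C$). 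Moreover $\epsilon_C \in J_C = \sum_{g \in F_0} S_g S_{g^{-1}} \subseteq S_C S_{C^{-1}}$, as required by the reformulation.

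Finally I would check the defining identities. Given $x \in S_C$, decompose $x = \sum_g x_g$ with $x_g \in S_g$ and $g \in C$; then $\epsilon_C x_g = \epsilon_C \epsilon_g x_g = \epsilon_g x_g = x_g$, and summing yields $\epsilon_C x = x$. Constructing $\epsilon_{C^{-1}}$ in the same fashion from the coset $C^{-1}$ and combining $x_g = x_g \epsilon_{g^{-1}}$ with $\epsilon_{g^{-1}}\epsilon_{C^{-1}} = \epsilon_{g^{-1}}$ gives $x\epsilon_{C^{-1}} = x$. This establishes the condition of \cite[Prop. 7]{nystedt2016epsilon} and hence the epsilon-strongness of the induced grading. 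The one place where the argument can fail without the chain condition is the construction of $\epsilon_C$: an infinite family of central idempotents need not admit a single idempotent serving as the identity of the ideal they generate, so the noetherian reduction to the finite set $F_0$ is the crux of the proof.
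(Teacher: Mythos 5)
Your proof is correct, and it takes a genuinely different route from the one the paper relies on. The paper's argument (developed in the cited companion work \cite{induced2018}, and sketched in the source of this paper) is global: it first checks that the induced $G/N$-grading of a symmetrically graded ring is always symmetric, then shows that the joins of the idempotents $\{\epsilon_g \mid g \in C\}$ form a set of commuting local units for $S_C S_{C^{-1}}$, and finally feeds in noetherianity through block decomposability: a one-sided noetherian ring has $1$ as a finite sum of centrally primitive idempotents, hence only finitely many central idempotents, so the set of all $\epsilon_g$ is finite and each local-unit set has a greatest element, which is the desired identity of $S_C S_{C^{-1}}$. Your argument is local and more elementary: for a fixed coset $C$ you apply the chain condition only to the ideal $J_C=\sum_{g\in C}R\epsilon_g$ to extract a finite subset $F_0$, form the join $\epsilon_C = 1-\prod_{g\in F_0}(1-\epsilon_g)$ of commuting central idempotents, and verify the characterization of epsilon-strongness from \cite[Prop. 7]{nystedt2016epsilon} directly; this also makes a separate symmetry check unnecessary, since that characterization subsumes it. The paper's route buys more structural information --- the grading is ``epsilon-finite'' (finitely many epsilons in total), and one obtains an if-and-only-if criterion for the induced grading to be epsilon-strong with no hypothesis on $R$ --- while your route is shorter, avoids the local-unit formalism and the block-decomposition theorem, and uses the hypothesis in a visibly weaker, per-coset form (only finite generation of each $J_C$ is needed). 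Your closing remark also correctly identifies the crux: without the chain condition the family $\{\epsilon_g \mid g \in C\}$ need not be dominated by a single idempotent, and the conclusion can indeed fail.
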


\bigskip

Finally, we recall the following definition. 

\begin{definition}
A group $G$ is \textit{polycyclic-by-finite} if it has a subnormal series, 
\begin{equation*}
1 = G_0 \lhd G_1 \lhd \dots G_{n-1} \lhd G_n \lhd G_{n+1}=G,
\end{equation*}
such that $G/G_n$ is finite and $G_{i+1}/G_i, 1 \leq i \leq n-1$ is a cyclic group. In other words, a polycyclic-by-finite is a group containing a polycyclic subgroup with finite index.
\label{def:1}
\end{definition}

For example, the groups $\mathbb{Z}^r$ for $r > 0$ are polycyclic-by-finite.

\section{Noetherian and artinian epsilon-strongly graded rings}
\label{sec:3}
Recall that a ring is called \emph{left (right) noetherian} if every ascending sequence of left (right) ideals of the ring stabilizes. A left (right) $R$-module $_R M$ ($M_R$) is \emph{left (right) noetherian} if it satisfies the ascending chain condition on left (right) $R$-submodules. A ring $R$ is left (right) noetherian if and only if $R$ is left (right) noetherian as a left (right) module over itself. Another well-known characterization is that a ring $R$ is left (right) noetherian if and only if every finitely generated left (right) $R$-module is left (right) noetherian. Artinian rings and modules have similar characterizations but instead with respect to the descending chain condition.

\subsection{The Ascending Chain Condition}

\smallskip

We first recall the following well-known result (see e.g. \cite[Prop. 5.4.2]{nastasescu2004methods}).


\begin{proposition}
Let $G$ be an arbitrary group and let $S=\bigoplus_{g \in G} S_g$ be a $G$-graded ring with principal component $R$. If $S$ is left (right) noetherian, then $R$ is left (right) noetherian. \label{prop:going_down1}
\end{proposition}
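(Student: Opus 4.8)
The plan is to exploit the canonical projection onto the principal component together with an extension--contraction argument for left ideals. I will treat the left case; the right case is entirely symmetric, replacing $S I_n$ by $I_n S$ throughout. Two structural facts set things up. Recall that in any $G$-graded ring the multiplicative identity lies in the principal component, so $1 \in R$ and $R$ is unital with the same identity as $S$. Moreover, the projection $\pi_e \colon S \to R$ sending $x = \sum_{g} s_g$ to its degree-$e$ component $s_e$ is additive, and since $S_e S_e \subseteq S_e$ and $S_e S_g \subseteq S_g$, it is a homomorphism of $R$-bimodules; that is, $\pi_e(r x r') = r\, \pi_e(x)\, r'$ for all $r, r' \in R$.

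With this in place, let $I_1 \subseteq I_2 \subseteq \cdots$ be an ascending chain of left ideals of $R$. For each $n$, form the left ideal $S I_n$ of $S$ generated by $I_n$. These satisfy $S I_1 \subseteq S I_2 \subseteq \cdots$, and since $S$ is left noetherian this chain stabilizes, say $S I_n = S I_N$ for all $n \geq N$. The key claim is that the original chain can be recovered via $\pi_e(S I_n) = I_n$. To see the inclusion $\pi_e(S I_n) \subseteq I_n$, write a typical element of $S I_n$ as a finite sum $\sum_k s^{(k)} a^{(k)}$ with $s^{(k)} \in S$ and $a^{(k)} \in I_n \subseteq S_e$; decomposing each $s^{(k)}$ into homogeneous components, the degree-$e$ part of this element lies in $S_e I_n = R I_n \subseteq I_n$. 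The reverse inclusion $I_n \subseteq \pi_e(S I_n)$ is immediate, since $I_n \subseteq S I_n$ (because $1 \in S$) and $\pi_e$ restricts to the identity on $R \supseteq I_n$. Applying $\pi_e$ to the stabilized chain then yields $I_n = \pi_e(S I_n) = \pi_e(S I_N) = I_N$ for all $n \geq N$, so the chain of left ideals of $R$ stabilizes and $R$ is left noetherian.

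The only delicate point is the identity $\pi_e(S I_n) = I_n$, and within it specifically the inclusion $\pi_e(S I_n) \subseteq I_n$: this is precisely where the grading is used, since it forces that anything landing in degree $e$ after multiplying $I_n$ on the left can only come from the degree-$e$ part of the left factor acting on $I_n \subseteq R$. Everything else---stabilization of the chain $\{S I_n\}$ upstairs and the trivial reverse inclusion---is formal, and I expect no hypothesis on $G$ to be required.
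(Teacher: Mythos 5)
Your proof is correct: the reduction $\pi_e(SI_n)=I_n$, with the containment $\pi_e(SI_n)\subseteq I_n$ forced by the grading ($S_gS_e\subseteq S_g$) and the reverse containment coming from $1\in S$, is exactly the standard extension--contraction argument, and the right-handed case is indeed symmetric. Note that the paper itself offers no proof of this proposition—it recalls it as well-known, citing \cite[Prop.~5.4.2]{nastasescu2004methods}—so your argument is a correct self-contained substitute for that citation and coincides with the textbook proof.
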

The converse of Proposition \ref{prop:going_down1} does not hold in general as the following example shows.

\begin{example}
(\cite[Expl.  1.1.22]{hazrat2016graded})
Let $S$ be the ring of Laurent polynomials (over some field $K$) in infinitely many variables $X_1, X_2, \dots$. Then, $S$ is strongly graded by extending the standard one variable $\mathbb{Z}$-grading to an infinite variable $\prod_{\mathbb{N}} \mathbb{Z}$-grading. More precisely, an element ${g \in \prod_{\mathbb{N}} \mathbb{Z}}$ can be identified with a sequence where $g(i)$ is the degree in the variable $X_i$. 

Notice that $S$ is not noetherian but $R=S_e=K$ is noetherian since it is a field. 
\end{example}

In the case of epsilon-strongly graded rings there is a converse of Proposition \ref{prop:going_down1} for a certain class of groups. For this purpose, we will generalize the proof for strongly graded rings given in \cite{bell1987localization}. Some changes are needed, but otherwise the proof is similar and uses a version of Hilbert's Basis Theorem. 

\smallskip

To establish the general result we will first consider the special cases of $G$ finite and $G = \mathbb{Z}$. Then, we reduce the general case to these two special cases. 

\smallskip

We recall that if $M$ is a finitely generated projective left $R$-module, then $\text{Hom}_R(M,R)$ is a finitely generated projective right $R$-module. 

\begin{proposition}
Let $G$ be an arbitrary group and let $S=\bigoplus_{g \in G} S_g$ be an epsilon-strongly $G$-graded ring. If ${\text{Supp}(S) < \infty}$, then $S$ is finitely generated as both a left and right $R$-module.
\label{prop:2}
\end{proposition}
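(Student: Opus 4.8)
The plan is to reduce the statement to a single homogeneous component and then reassemble generators using finiteness of the support. Since $\text{Supp}(S)$ is finite, $S = \bigoplus_{g \in \text{Supp}(S)} S_g$ is a \emph{finite} direct sum of $R$-submodules, so it suffices to show that each $S_g$ is finitely generated as a left (resp.\ right) $R$-module; a finite union of generating sets for the individual components then generates $S$ over $R$. Thus the whole problem collapses to a fixed $g \in \text{Supp}(S)$, which I attack via the epsilon structure.

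First I would fix $g$ and record that, by the very definition of the product of homogeneous components, the element $\epsilon_g \in S_g S_{g^{-1}}$ is a \emph{finite} sum $\epsilon_g = \sum_{i=1}^{n} a_i b_i$ with $a_i \in S_g$ and $b_i \in S_{g^{-1}}$. Using the characterization $x = \epsilon_g x$ valid for all $x \in S_g$ (the second description of epsilon-strong gradings recalled after Definition \ref{def:epsilon}), I compute
\[
x = \epsilon_g x = \sum_{i=1}^{n} a_i (b_i x).
\]
The crucial point is that each coefficient satisfies $b_i x \in S_{g^{-1}} S_g \subseteq S_{g^{-1} g} = S_e = R$, so this exhibits $x$ as a \emph{right} $R$-linear combination of the fixed finite set $\{a_1, \dots, a_n\}$. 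Hence $S_g = \sum_i a_i R$ is finitely generated as a right $R$-module.

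For the left-module statement I would run the mirror-image argument with $\epsilon_{g^{-1}} \in S_{g^{-1}} S_g$ and the dual identity $x = x \epsilon_{g^{-1}}$: writing $\epsilon_{g^{-1}} = \sum_j c_j d_j$ with $c_j \in S_{g^{-1}}$ and $d_j \in S_g$, I obtain $x = \sum_j (x c_j) d_j$ with $x c_j \in S_g S_{g^{-1}} \subseteq R$, so $\{d_j\}$ is a finite left-generating set. I would also observe that the coordinate maps $x \mapsto x c_j$ are left $R$-linear functionals on $S_g$, so the same computation is precisely a dual-basis expansion realizing $S_g$ as a finitely generated \emph{projective} left $R$-module; the preceding recollection that $\text{Hom}_R(M,R)$ is finitely generated projective over the opposite side can then transport projectivity to the right if desired, but for the present statement finite generation is all that is required.

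I do not expect a genuine obstacle. The only points demanding care are (i) that $\epsilon_g$ is honestly a \emph{finite} sum of elementary products $a_i b_i$, which is automatic from how $S_g S_{g^{-1}}$ is defined, and (ii) keeping the sidedness straight, so that contracting with $b_i$ (resp.\ $c_j$) on the correct side deposits the coefficients into $R = S_e$ rather than into another component. Finiteness of the support enters only at the very end, to pass from the finitely generated components to $S$ itself.
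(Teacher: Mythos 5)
Your proof is correct, and it differs from the paper's in one meaningful way: where you prove finite generation of each component $S_g$ by hand, the paper simply cites \cite[Prop.~7]{nystedt2016epsilon}, which states that $S_g$ is a finitely generated projective left $R$-module isomorphic to $\mathrm{Hom}_R(S_{g^{-1}},R)$ as a right $R$-module, and then concludes exactly as you do from finiteness of the support. Your componentwise argument --- writing $\epsilon_g = \sum_i a_i b_i$ as a finite sum with $a_i \in S_g$, $b_i \in S_{g^{-1}}$, and expanding $x = \epsilon_g x = \sum_i a_i (b_i x)$ with $b_i x \in S_{g^{-1}}S_g \subseteq R$, and the mirror expansion $x = x\epsilon_{g^{-1}} = \sum_j (x c_j) d_j$ for the left side --- is precisely the dual-basis computation that underlies the cited result, and your side-checks (which product lands in $R = S_e$, and which epsilon acts as identity on which side) are all correct. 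What your route buys is a self-contained, elementary proof that needs only the characterization of epsilon-strong gradings recalled after Definition~\ref{def:epsilon}; what the paper's citation buys is brevity plus the stronger facts (projectivity and the $\mathrm{Hom}$-duality between $S_g$ and $S_{g^{-1}}$), which your closing remark correctly observes can also be extracted from your expansion via the dual basis lemma, though none of that extra strength is needed for the proposition.
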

\begin{proof}
By \cite[Prop. 7]{nystedt2016epsilon}, for each $g \in G$, $S_g$ is a finitely generated projective left $R$-module isomorphic to $\text{Hom}_R(S_{g^{-1}}, R)$ as a right $R$-module. In particular, $S_g$ is finitely generated as both a left and right $R$-module. Hence, $S=\bigoplus_{g \in G} S_g$ is finitely generated since the support is finite. 
\end{proof}

\begin{proposition}
Let $G$ be a finite group and let $S=\bigoplus_{g \in G}S_g$ be an epsilon-strongly $G$-graded ring with principal component $R$. If $R$ is left (right) noetherian, then $S$ is left (right) noetherian. 
\label{prop:finite}
\end{proposition}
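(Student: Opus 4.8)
The plan is to reduce the ring-theoretic chain condition on $S$ to a purely module-theoretic chain condition over the noetherian base ring $R$, exploiting the fact that a finite grading group forces the support to be finite.

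First I would observe that since $G$ is finite we trivially have $\text{Supp}(S) \subseteq G$ finite, so that Proposition \ref{prop:2} applies and yields that $S$ is finitely generated as a left (right) $R$-module. This is exactly where the epsilon-strong hypothesis is used: it guarantees via \cite[Prop. 7]{nystedt2016epsilon} that each homogeneous component $S_g$ is a finitely generated projective $R$-module, and finiteness of the support then upgrades this to finite generation of the whole ring $S$ over $R$.

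Next I would invoke the standard fact, recalled at the start of this section, that a finitely generated module over a left (right) noetherian ring is a noetherian module. Since $R$ is assumed left (right) noetherian and $S$ is finitely generated as a left (right) $R$-module, it follows that $S$ is a noetherian left (right) $R$-module; that is, $S$ satisfies the ascending chain condition on its left (right) $R$-submodules.

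Finally, the observation that closes the argument is that every left (right) ideal $I$ of $S$ is in particular a left (right) $R$-submodule of $S$, since $R = S_e \subseteq S$ forces $RI \subseteq SI \subseteq I$. Hence the ascending chain condition on left (right) $R$-submodules of $S$ immediately implies the ascending chain condition on left (right) ideals of $S$, so $S$ is left (right) noetherian. I do not expect a genuine obstacle here: essentially all of the work has been absorbed into Proposition \ref{prop:2}, and what remains is only the routine passage from $R$-submodules to $S$-ideals.
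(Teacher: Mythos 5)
Your proposal is correct and follows exactly the paper's own argument: apply Proposition \ref{prop:2} (using that the support is finite since $G$ is finite) to get that $S$ is a finitely generated $R$-module, conclude that $_RS$ (resp. $S_R$) is a noetherian $R$-module, and then pass from $R$-submodules to $S$-ideals. The only difference is that you spell out the final routine step, which the paper compresses into ``in particular, $_SS$ is left (right) noetherian.''
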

\begin{proof}
Since the support of $S=\bigoplus_{g \in G} S_g$ is finite, Proposition \ref{prop:2} implies that $_RS$ is finitely generated. Therefore, $_RS$ is left (right) noetherian and, in particular, $_SS$ is left (right) noetherian.
\end{proof}
 
Next, we shall prove the special case of $G = \mathbb{Z}$. Indeed, let $S=\bigoplus_{g \in G} S_g$ be an epsilon-strongly $\mathbb{Z}$-graded ring with principal component $R$. We can define a \emph{length} for each element $s \in S$ in the following way. Any element $s \in S$ decomposes as a finite sum $s = \sum_{i \in \text{Supp}(s)} s_i$ where $0 \ne s_i \in S_i$. We note that the set $\text{Supp}(s) \subset \mathbb{Z}$ is finite, hence we can let $a$ and $b$ be the least and greatest integers of $\text{Supp}(s)$ respectively. Then,  
$s = \sum_{i=a}^b s_i'$  where $s_i' = s_i$ if $i \in \text{Supp}(s)$ and $s_i' = 0$ otherwise. We note that $s_a' \ne 0$ and $s_b' \ne 0$. The strictly positive integer $b-a+1$ is called the \emph{length} of $s$.

Now, let $I$ be a right ideal of $S$. For $n \ge 1$, let,
\begin{equation*}
\text{Id}_n(I) = \Big \{ r \in R \mid r+ \sum_{i=-n+1}^{-1} s_i \in I, s_i \in S_i \Big \},
\end{equation*}
where the sum from $i=-n+1$ to $-1$ is to be understood as empty for $n=1$. Informally, $\text{Id}_n(I)$ is the leading coefficients of elements of $I$ with length at most $n$.

\begin{lemma}
Let $S$ be an epsilon-strongly $\mathbb{Z}$-graded ring.
With the notation as above, the following assertions hold.

\begin{enumerate}[(a)]
\begin{item}
For $n \geq 1$, $\text{Id}_n(I)$ is a right ideal of $R$ and $\text{Id}_{n}(I) \subseteq \text{Id}_{m}(I)$ if $1 \le n \le m$.
\end{item}
\begin{item}
Let $J \subseteq I$ be right ideals of $S$ such that $\text{Id}_n(I)=\text{Id}_n(J)$ for all $n \geq 1$. Then, $J=I$. 
\end{item}
\end{enumerate}

\label{lem:2}
\end{lemma}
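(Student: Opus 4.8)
The plan is to prove the two assertions of Lemma \ref{lem:2} separately, with part (b) being the substantive one. Throughout I would use the epsilon-strong structure via the distinguished idempotents $\epsilon_n \in S_n S_{-n}$ satisfying $x = \epsilon_n x = x \epsilon_{-n}$ for all $x \in S_n$, since these are what let us manipulate leading coefficients.

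For part (a), the containment $\text{Id}_n(I) \subseteq \text{Id}_m(I)$ for $n \le m$ is immediate, since an element witnessing membership in $\text{Id}_n(I)$ (with homogeneous tail running from $-n+1$ to $-1$) is also a valid witness for $\text{Id}_m(I)$ by padding the extra lower-degree slots with zeros. To see that $\text{Id}_n(I)$ is a right ideal of $R$, I first check it is an additive subgroup: if $r, r'$ have witnesses $r + \sum_{i=-n+1}^{-1} s_i$ and $r' + \sum_{i=-n+1}^{-1} s_i'$ in $I$, their sum witnesses $r + r' \in \text{Id}_n(I)$, and similarly for negatives. For the right $R$-action, given $r \in \text{Id}_n(I)$ with witness $w = r + \sum s_i \in I$ and any $t \in R = S_e$, the product $wt$ lies in $I$ and has the form $rt + \sum_{i=-n+1}^{-1}(s_i t)$ with $s_i t \in S_i$, so $rt \in \text{Id}_n(I)$.

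The main work is part (b). The natural approach is a downward induction on the length of a putative element of $I \setminus J$. Suppose for contradiction that $I \ne J$ and pick $x \in I \setminus J$ of minimal length, say with top degree $b$ and bottom degree $a$, so its length is $b - a + 1$. The leading coefficient idea is to normalize: the degree-$b$ component $x_b \in S_b$ can be pulled down to degree $e$ by multiplying on the right by something in $S_{-b}$; concretely, using that $S_b S_{-b}$ is unital with identity $\epsilon_b$ and the pairing $S_b \cong \text{Hom}_R(S_{-b}, R)$, I would produce from $x_b$ an element of $\text{Id}_{\ell}(I)$ (where $\ell = b-a+1$) representing its leading coefficient. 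Here is where I must be careful, because the definition of $\text{Id}_n$ is anchored at degree $0$ with tail going down to $-n+1$, whereas $x$ sits at arbitrary degrees $[a,b]$; I expect to first multiply $x$ by a homogeneous element of $S_{-b}$ on the right to shift the top to degree $0$, producing $x' \in I$ whose leading coefficient $r \in R$ lies in $\text{Id}_\ell(I) = \text{Id}_\ell(J)$. By hypothesis there is then $y \in J$ with the same leading coefficient $r$ and length at most $\ell$; shifting $y$ back up by an element of $S_b$ (using the epsilon relation $\epsilon_b x_b = x_b$ to recover exactly the top component) yields $y' \in J$ whose degree-$b$ component agrees with that of $x$. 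Then $x - y' \in I$ has strictly smaller length, so by minimality $x - y' \in J$, whence $x \in J$, a contradiction.

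The hard part will be executing the shifting maneuvers so that multiplying down to degree $0$ and back up to degree $b$ genuinely reconstructs the leading coefficient without losing information — in the strong-graded case one has $S_b S_{-b} = R$ so an element of $S_b$ times a suitable element of $S_{-b}$ recovers any prescribed coefficient, but in the epsilon-strong case the products only fill the unital ideal $S_b S_{-b}$ rather than all of $R$. I therefore expect to need the projectivity statement $S_b \cong \text{Hom}_R(S_{-b},R)$ together with the symmetry relation $S_b S_{-b} S_b = S_b$ to guarantee that the leading coefficient $r$, which a priori lives in the ideal $S_b S_{-b}$, can be faithfully transported and matched between $I$ and $J$. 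Handling this compatibility of leading coefficients with the epsilon-idempotents $\epsilon_b$ is the crux, and it is the point where the proof must genuinely depart from the classical strongly-graded argument.
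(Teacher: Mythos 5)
Your part (a) is fine. In part (b) your overall scheme --- take $x = \sum_{i=a}^b s_i \in I \setminus J$ of minimal length $\ell = b-a+1$, match its leading coefficient through $\text{Id}_\ell(I)=\text{Id}_\ell(J)$, subtract, and invoke minimality --- is the same as the paper's, but the step you yourself flag as ``the hard part'' is a genuine gap, and the specific maneuver you propose for it fails. You want to pick a single $y_{-b} \in S_{-b}$, pass to $xy_{-b}$, find $y \in J$ with leading coefficient $s_b y_{-b}$, and then multiply by a single $z_b \in S_b$ to produce $y' \in J$ whose degree-$b$ component is exactly $s_b$. But the degree-$b$ component of $yz_b$ is $s_b y_{-b} z_b$, and no single pair $(y_{-b}, z_b)$ gives $s_b y_{-b} z_b = s_b$ in general: the identity you need is $s_b = s_b \epsilon_{-b}$, and $\epsilon_{-b}$ is an element of the ideal $S_{-b}S_b$, i.e.\ a finite \emph{sum} $\epsilon_{-b} = \sum_j u_j v_j$ with $u_j \in S_{-b}$, $v_j \in S_b$, not a single product. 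So the round trip as described loses precisely the information it was meant to preserve. (The duality $S_b \cong \text{Hom}_R(S_{-b},R)$ you invoke does not rescue this; it is also not needed.)

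There are two ways to close the gap. The paper avoids the round trip entirely: it shows a minimal-length element of $I \setminus J$ may be chosen with top degree $b=0$, by a case analysis on whether $xS_{-b} \subseteq J$. If $xS_{-b} \not\subseteq J$, choose $y_{-b}$ with $xy_{-b} \in I\setminus J$; this element is supported in degrees $[a-b,0]$, and minimality forces its length to be exactly $\ell$, hence its top degree is $0$, and one runs the leading-coefficient argument directly on it --- no shift back up ever occurs. If instead $xS_{-b} \subseteq J$, then $x\epsilon_{-b} \in xS_{-b}S_b \subseteq J$, and $x - x\epsilon_{-b} = \sum_{i=a}^{b-1} s_i(1-\epsilon_{-b})$ lies in $I$ and has length at most $b-a < \ell$, so by minimality it lies in $J$, forcing $x \in J$, a contradiction. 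Alternatively, your round-trip idea can be repaired by summing over the decomposition of the idempotent: write $\epsilon_{-b} = \sum_j u_j v_j$ as above; for each $j$ the element $xu_j \in I$ is supported in $[a-b,0]$ with degree-zero coefficient $s_b u_j \in \text{Id}_\ell(I) = \text{Id}_\ell(J)$, so choose $y_j \in J$ with the same degree-zero coefficient and support in $[a-b,0]$; then $y' := \sum_j y_j v_j \in J$ has degree-$b$ component $\sum_j s_b u_j v_j = s_b\epsilon_{-b} = s_b$, so $x - y' \in I$ has length less than $\ell$, and minimality gives $x - y' \in J$, hence $x \in J$, a contradiction. This repaired version is correct and even dispenses with the paper's case analysis, but as written your proposal stops short of it.
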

\begin{proof}
(a): Straightforward.

\smallskip

(b): Seeking a contradiction, suppose that $I \ne J$. Then, we can choose, 
\begin{equation*}
x = \sum_{i=a}^b s_i \in I \setminus J, 
\end{equation*}
with $s_i \in S_i$ such that the length $b-a+1$ is minimal. For the moment let us assume that we can require $b=0$ without loss of generality. Write $x = s_0 + \sum_{i=c}^{-1} s_i$ and note that $x$ has length $1-c$. Then $s_0 \in \text{Id}_{1-c}(I)=\text{Id}_{1-c}(J)$, which implies that there exists some $y = s_0 +\sum_{i=c}^{-1} \tilde{s_i} \in J$. Note that $x-y = \sum_{i=c}^{-1} (s_i - \tilde {s_i} )$ has length $-c$. Since $s_i - \tilde{s_i} \in S_i$ and by the choice of $x$ as the element of minimal length in $I \setminus J$, we have $x-y \in J$. Hence, $x \in J$, which is a contradiction.  

\smallskip

To finish the proof we need to show that we can assume $b=0$. Again, let $x=\sum_{i=a}^b s_i$ be a fixed element in $I \setminus J$ of minimal length. Using that $I$ is a right ideal of $S$, we get that $x S_{-b} \subseteq I.$  For the moment assume that $xS_{-b} \not \subseteq J$. This implies that there exists some $y_{-b} \in S_{-b}$ such that $x y_{-b} \in I \setminus J$. Therefore, $ x y_{-b} =  \sum_{i=a}^b s_i y_{-b}$ with $s_i y_{-b} \in S_{i}S_{-b} \subseteq S_{i-b}$. Hence, letting $s_i' = s_{i+b}y_{-b} \in S_i$ we see that $x y_{-b} = \sum_{i=a-b}^0 s_i' = s_0' + \sum_{i=a-b}^{-1} s_i'$ has length $b-a+1$. In other words, we can choose $x$ with $b=0$ in the decomposition.  

Finally, we prove that $x S_{-b} \not \subseteq J$ for any $x \in I \setminus J$ of minimal length. Suppose to get a contradiction that $xS_{-b} \subseteq J$. Then $x S_{-b} S_b \subseteq J$. Furthermore, since, $\epsilon_{-b} \in S_{-b}S_b$, 
\begin{equation*}
x \epsilon_{-b} = \sum_{i=a}^b s_i \epsilon_{-b} = s_b + \sum_{i=a}^{b-1} s_i \epsilon_{-b} \in J\subseteq I.
\end{equation*}
Thus, 
\begin{equation*}
x - x \epsilon_{-b} = \Big ( s_b + \sum_{i=a}^{b-1} s_i \Big ) - \Big ( s_b + \sum_{i=a}^{b-1} s_i \epsilon_{-b} \Big )= \sum_{i=a}^{b-1} s_i(1-\epsilon_{-b}) \in I.
\end{equation*}
Since $(1-\epsilon_{-b} ) \in R$ we have that $s_i (1-\epsilon_{-b} ) \in S_i$. But since the length of $x -x \epsilon_{-b}$ is $b-a$, which is strictly less than the the length of $x$, the choice of $x$ implies that $x - x \epsilon_{-b} \in J$. Hence $x \in J$, which is a contradiction.

\end{proof}


\begin{proposition}
Let $S=\bigoplus_{i \in \mathbb{Z}} S_i$ be an epsilon-strongly $\mathbb{Z}$-graded ring and let $R$ be the principal component of $S$. If $R$ is right (left) noetherian, then $S$ is right (left) noetherian. 
\label{prop:case2}
\end{proposition}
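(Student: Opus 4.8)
The plan is to adapt the Hilbert Basis Theorem argument from \cite{bell1987localization} to our setting, using Lemma \ref{lem:2} as the essential ingredient that replaces the classical manipulation of leading coefficients. Since the two-sided statement is symmetric, I would treat the right-noetherian case; the left case then follows from the obvious left-handed analogues of $\text{Id}_n$ and of Lemma \ref{lem:2}. So assume $R$ is right noetherian and let
\[
I_1 \subseteq I_2 \subseteq I_3 \subseteq \cdots
\]
be an ascending chain of right ideals of $S$. The goal is to show that this chain stabilizes, for which, by Lemma \ref{lem:2}(b), it suffices to produce an index $K$ such that $\text{Id}_n(I_k) = \text{Id}_n(I_K)$ for all $n \geq 1$ and all $k \geq K$.

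Next I would organize the data into the doubly-indexed family $A_{n,k} := \text{Id}_n(I_k)$ of right ideals of $R$ (right ideals by Lemma \ref{lem:2}(a)). This family is monotone in both indices: $A_{n,k} \subseteq A_{n+1,k}$ by Lemma \ref{lem:2}(a), and $A_{n,k} \subseteq A_{n,k+1}$ because $I_k \subseteq I_{k+1}$ immediately gives $\text{Id}_n(I_k) \subseteq \text{Id}_n(I_{k+1})$. I would then invoke the noetherianity of $R$ twice. First, set $B_n := \bigcup_{k \geq 1} A_{n,k}$, a right ideal of $R$; these form an ascending chain $B_1 \subseteq B_2 \subseteq \cdots$, which stabilizes at some $N$, so that $B_n = B_N$ for all $n \geq N$. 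Second, for each of the finitely many $n$ with $1 \leq n \leq N$, the column chain $A_{n,1} \subseteq A_{n,2} \subseteq \cdots$ stabilizes; taking a common bound $K$ (enlarged so that also $B_N = A_{N,K}$) gives a single index that simultaneously terminates all $N$ of these chains.

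Finally I would verify that $A_{n,k} = A_{n,K}$ for every $n \geq 1$ and every $k \geq K$, splitting on the size of $n$. For $n \leq N$ this is immediate from the choice of $K$. For $n > N$ one argues $A_{n,K} \subseteq A_{n,k} \subseteq B_n = B_N = A_{N,K} \subseteq A_{n,K}$, forcing equality throughout; here the stabilization of the limit chain $B_\bullet$ is exactly what controls the otherwise unbounded index $n$. Applying Lemma \ref{lem:2}(b) with $J = I_K \subseteq I_k = I$ then yields $I_k = I_K$ for all $k \geq K$, so the chain stabilizes and $S$ is right noetherian. I expect the only real subtlety to be this last bookkeeping step: the family $A_{n,k}$ is genuinely two-dimensional, so a naive single application of the ascending chain condition is insufficient, and the crux is combining the stabilization of the horizontal columns for small $n$ with the stabilization of the vertical limit chain $B_n$ to extract one uniform $K$ valid across all infinitely many $n$.
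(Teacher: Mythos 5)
Your proposal is correct and takes essentially the same route as the paper: a Hilbert-basis-style argument that applies the ascending chain condition in $R$ twice to the doubly-indexed family $\text{Id}_n(I_k)$ and then concludes via Lemma \ref{lem:2}(b). The only difference is bookkeeping: where you stabilize the chain of unions $B_n = \bigcup_k \text{Id}_n(I_k)$ to control large $n$, the paper stabilizes the diagonal sequence $\text{Id}_k(I_k)$, and both then finish by stabilizing the finitely many remaining column chains.
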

\begin{proof}
We will only show the right-handed case as the left-handed case can be proved analogously. 

Assume that $I_1 \subseteq I_2 \subseteq I_3 \subseteq \dots $ is an ascending sequence of right ideals in $S$. Then the diagonal sequence,
\begin{equation*}
\text{Id}_1(I_1) \subseteq \text{Id}_2(I_2) \subseteq \text{Id}_3(I_3) \subseteq \dots,
\end{equation*} is an ascending sequence of right ideals in $R$. Since $R$ is right noetherian by assumption there exists some $n$ such that $\text{Id}_n(I_n) = \text{Id}_k(I_k)$ for any $k \ge n$. 

Moreover, for $1 \le i \le n-1$, consider the sequence,
\begin{equation*}
\text{Id}_i(I_1) \subseteq \text{Id}_i(I_2) \subseteq \text{Id}_i(I_3) \subseteq \dots.
\end{equation*} For each $i$ there exists $n_i$ such that $\text{Id}_i(I_{n_i})=\text{Id}_i(I_m)$ for $m \ge n_i$. Taking, \begin{equation*}
n' = \text{max}_{1\le i \le n -1}(n, n_i),
\end{equation*} we have that $\text{Id}_i(I_{n'})=\text{Id}_i(I_m)$ for $m \ge n'$ and all $i$. By Lemma \ref{lem:2}, $I_m=I_{n'}$. Hence, the original sequence stabilizes and $S$ is right noetherian.   
\end{proof}

We now consider the general case of polycyclic-by-finite groups.

\begin{theorem}
Let $G$ be a polycyclic-by-finite group and let $S=\bigoplus_{g \in G} S_g$ be an epsilon-strongly $G$-graded ring with principal component $R$. If $R$ is right (left) noetherian, then $S$ is right (left) noetherian. 
\label{thm:noeth}
\end{theorem}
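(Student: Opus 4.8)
The plan is to reduce the general polycyclic-by-finite case to the two special cases already established: finite groups (Proposition \ref{prop:finite}) and the infinite cyclic group $G = \mathbb{Z}$ (Proposition \ref{prop:case2}). The natural strategy is an induction along the subnormal series guaranteed by Definition \ref{def:1}. I would prove by induction on the length of such a series that if $R = S_e$ is right (left) noetherian and $S$ is epsilon-strongly graded by a polycyclic-by-finite group $G$, then $S$ is right (left) noetherian.

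First I would handle the base case. When $G$ is trivial, $S = R$ and there is nothing to prove; more usefully, Proposition \ref{prop:finite} settles the case of any finite $G$, which will serve as one of the two anchors of the induction. For the inductive step, I would use the subnormal series $1 = G_0 \lhd \dots \lhd G_n \lhd G_{n+1} = G$ in which $G/G_n$ is finite and each successive quotient $G_{i+1}/G_i$ is cyclic. The idea is to peel off one factor at a time. Concretely, pick a normal subgroup $N$ of $G$ (coming from the series) such that $G/N$ is either finite or infinite cyclic, and such that $S(N) = \bigoplus_{g \in N} S_g$ is epsilon-strongly $N$-graded (Proposition \ref{prop:half}) with the \emph{same} principal component $R = S_e$. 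By the inductive hypothesis applied to the smaller group $N$, the ring $T := S(N)$ is right (left) noetherian.

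Now I would pass to the induced $G/N$-grading on $S$, namely $S = \bigoplus_{C \in G/N} S_C$ with $S_C = \bigoplus_{g \in C} S_g$ and new principal component $S_{[e]} = S(N) = T$. The crucial input here is Proposition \ref{prop:induced_epsilon}: since $R$ is right (left) noetherian, the induced $G/N$-grading is again epsilon-strong. Thus $S$ is an epsilon-strongly $(G/N)$-graded ring whose principal component $T$ is right (left) noetherian (by the previous paragraph). If $G/N$ is finite, Proposition \ref{prop:finite} applies directly to conclude $S$ is right (left) noetherian; if $G/N \cong \mathbb{Z}$, Proposition \ref{prop:case2} applies instead. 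Either way the inductive step closes, and induction on the series length completes the proof. Since $\mathbb{Z}$ and finite groups are exactly the two building blocks of polycyclic-by-finite groups, and we have both anchors available, this covers all cases.

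I expect the main obstacle to be organizing the induction cleanly so that the hypotheses of Proposition \ref{prop:induced_epsilon} are genuinely available at each stage. The subtlety is that Proposition \ref{prop:induced_epsilon} requires the principal component $R = S_e$ of the \emph{original} $G$-grading to be noetherian in order to guarantee that the induced $G/N$-grading is epsilon-strong. One must therefore be careful to always invoke this proposition relative to the bottom component $R$, which stays fixed throughout, rather than relative to the intermediate component $T = S(N)$. Once the bookkeeping of which group, which grading, and which principal component is being used at each step is made precise, the argument is a routine descent through the subnormal series, with Propositions \ref{prop:finite} and \ref{prop:case2} supplying the two atomic cases and Proposition \ref{prop:induced_epsilon} ensuring that epsilon-strongness is preserved at every reduction.
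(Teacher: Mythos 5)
Your proposal is correct and follows essentially the same route as the paper: induction on the length of the subnormal series, with Proposition \ref{prop:half} giving the epsilon-strong $G_k$-grading on $S(G_k)$, the inductive hypothesis making $S(G_k)$ noetherian, Proposition \ref{prop:induced_epsilon} (invoked for the fixed bottom component $R$, exactly as you caution) making the induced $G/G_k$-grading epsilon-strong, and Propositions \ref{prop:finite} and \ref{prop:case2} handling the finite and infinite cyclic quotients. The subtlety you flag about applying Proposition \ref{prop:induced_epsilon} relative to $R$ rather than the intermediate component is precisely how the paper's proof is organized.
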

\begin{proof}
We only prove the right-handed case since the left-handed case can be treated analogously. Suppose $R$ is right noetherian. We reduce the general proof to the cases where $G=\mathbb{Z}$ or $G$ is finite. The proof goes by induction on the length $n$ of the subnormal series. 

For $n=1$, we have $1 = G_0 \lhd G_1 = G$, i.e. $G=G/1$ is finite. Thus, $S$ is right noetherian by Proposition \ref{prop:finite}.

Next, assume that the theorem holds for subnormal series of length $k$ for some $k > 0$. Let $G$ be a polycyclic-by-finite group with subnormal series, 
\begin{equation*}
1 = G_0 \lhd G_1 \lhd \dots \lhd G_k \lhd G_{k+1}=G, 
\end{equation*}
as in Definition \ref{def:1}. By Proposition \ref{prop:half}, $S(G_k)$ is epsilon-strongly $G_k$-graded with principal component $R$. By the induction hypothesis, $S(G_k)$ is right noetherian. Furthermore, $S(G_{k+1})$ is epsilon-strongly $G_{k+1}$-graded with principal component $R$. Since $R$ is right noetherian, Proposition \ref{prop:induced_epsilon} implies that, the induced $G_{k+1}/G_{k}$-grading is epsilon-strong with principal component $S(G_k)$. By the assumption on $G$, $G_{k+1}/G_k$ is either (i) isomorphic to $\mathbb{Z}$ or (ii) finite. In the first case, Proposition \ref{prop:case2} implies that $S(G_{k+1})$ is right noetherian. In the second case, Proposition \ref{prop:finite} implies that $S(G_{k+1})$ is right noetherian.   

Hence, $S=S(G)=S(G_{k+1})$ is right noetherian and the theorem follows by the induction principle.
\end{proof}

We can now establish our characterization of noetherian epsilon-strongly graded rings.

\begin{proof}[Proof of Theorem \ref{thm:n}]
Assume that $S$ is left (right) noetherian. Then, Proposition \ref{prop:going_down1} implies that $R=S_e$ is left (right) noetherian. Conversely, let the principal component $R=S_e$ be left (right) noetherian. Then, Theorem \ref{thm:noeth} implies that $S$ is left (right) noetherian.
\end{proof}

\subsection{The Descending Chain Condition}

Given a torsion-free group $G$ we characterize when an epsilon-strongly $G$-graded ring is one-sided artinian.

\smallskip

The following well-known result gives a necessary condition for a group graded ring to be one-sided artinian (see e.g. \cite[Prop. 5.4.2]{nastasescu2004methods}).

\begin{proposition}
Let $G$ be an arbitrary group and let $S=\bigoplus_{g \in G} S_g$ be a $G$-graded ring with principal component $R$. If $S$ is left (right) artinian, then $R$ is left (right) artinian. 
\label{prop:artinian}
\end{proposition}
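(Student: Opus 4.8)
The plan is to exhibit an order-preserving injection from the lattice of left ideals of $R$ into the lattice of left ideals of $S$, so that the descending chain condition on the latter forces it on the former. I would treat only the left-handed case, the right-handed one being symmetric (using right ideals and $IS$ in place of $SI$).

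First I would fix the canonical projection $\pi_e \colon S \to R$ onto the principal component, sending $x = \sum_{g} s_g$ to its degree-$e$ part $s_e$; this is a homomorphism of $(R,R)$-bimodules. To each left ideal $I$ of $R$ I would associate the left ideal $SI$ of $S$. The key observation is that $SI$ is a \emph{graded} left ideal: since $I \subseteq S_e = R$, any product $s_g x$ with $s_g \in S_g$ and $x \in I$ lies in $S_g S_e \subseteq S_g$, so $SI = \bigoplus_{g \in G} S_g I$ with $S_g I \subseteq S_g$. Consequently its degree-$e$ component is $(SI)_e = S_e I = RI = I$, where the last equality uses $1 \in R$ together with $I$ being a left ideal of $R$. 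In particular $I$ is recoverable from $SI$ via $\pi_e(SI) = (SI)_e = I$, so the assignment $I \mapsto SI$ is injective, and it is clearly inclusion-preserving.

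Given a descending chain $I_1 \supseteq I_2 \supseteq \dots$ of left ideals of $R$, the chain $SI_1 \supseteq SI_2 \supseteq \dots$ of left ideals of $S$ stabilizes by the hypothesis that $S$ is left artinian, say $SI_n = SI_m$ for all $m \ge n$. Applying $\pi_e$ yields $I_n = (SI_n)_e = (SI_m)_e = I_m$ for all $m \ge n$, so the original chain stabilizes and $R$ is left artinian.

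There is no serious obstacle here; the one point requiring care is the verification that $SI$ is graded with $(SI)_e = I$ exactly, which is precisely what makes $I \mapsto SI$ injective. Note that this argument is verbatim the same as the one establishing the noetherian Proposition~\ref{prop:going_down1}, with ascending chains replaced by descending chains; indeed, both statements are contained in the cited \cite[Prop. 5.4.2]{nastasescu2004methods}.
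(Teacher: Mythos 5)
Your proof is correct: $SI$ is a graded left ideal of $S$ with $(SI)_e = S_e I = I$, so $I \mapsto SI$ is an inclusion-preserving injection from left ideals of $R$ to left ideals of $S$, and the descending chain condition passes from $S$ to $R$ exactly as you describe. The paper gives no proof of its own, citing the result as well known from \cite[Prop.~5.4.2]{nastasescu2004methods}, and your argument is precisely the standard one behind that citation, so there is nothing to fix.
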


First, we prove the sufficiency of the conditions in Theorem \ref{thm:a}.

\begin{proposition}
Let $G$ be an arbitrary group and let $S=\bigoplus_{g \in G} S_g$ be an epsilon-strongly $G$-graded ring. If $\text{Supp}(S) < \infty$ and $R$ is left (right) artinian, then $S$ is left (right) artinian.\label{prop:suff}
\end{proposition}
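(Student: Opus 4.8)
The plan is to exploit the structural fact, established in Proposition~\ref{prop:2}, that a finite support forces $S$ to be finitely generated as a module over its principal component $R$. Specifically, since $\text{Supp}(S) < \infty$, Proposition~\ref{prop:2} gives that $_R S$ (respectively $S_R$) is a finitely generated left (right) $R$-module. The overarching strategy is then to transfer the descending chain condition from $R$ to $S$ via this finite generation, mirroring exactly the noetherian argument in Proposition~\ref{prop:finite} but with ``artinian'' in place of ``noetherian'' throughout.

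The key steps, in order, are as follows. First I would invoke Proposition~\ref{prop:2} to conclude that $_R S$ is finitely generated, using the hypothesis $\text{Supp}(S) < \infty$. Second, I would use the standard module-theoretic fact that a finitely generated module over a left artinian ring is itself a left artinian module: a finitely generated module is a quotient of a finite free module $R^n$, and $R^n$ is artinian as a finite direct sum of copies of the artinian module $_R R$, and artinianity passes to quotients. Hence $_R S$ is a left artinian $R$-module. Third, I would observe that every left $S$-submodule of $S$ is in particular a left $R$-submodule of $S$, so any descending chain of left $S$-ideals is a descending chain of $R$-submodules of the artinian module $_R S$, and therefore stabilizes. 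This shows $_S S$ is left artinian, i.e. $S$ is left artinian. The right-handed case is entirely analogous, using that $S_R$ is finitely generated and $R_R$ is artinian.

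The proof is genuinely short and there is no serious obstacle, since the heavy lifting (finite generation of $S$ over $R$) has already been packaged into Proposition~\ref{prop:2}, whose own content rests on \cite[Prop.~7]{nystedt2016epsilon} that each $S_g$ is a finitely generated projective $R$-module. The only point requiring a moment's care is the asymmetry between left and right: finite generation of $_R S$ as a \emph{left} module is what one needs for the left-artinian conclusion, and one must confirm that Proposition~\ref{prop:2} indeed supplies finite generation on both sides (it does, as stated). Thus the single genuine ingredient beyond the cited propositions is the elementary lemma that finitely generated modules over one-sided artinian rings are one-sided artinian, which I would either cite or dispatch in one line via the surjection $R^n \twoheadrightarrow S$.
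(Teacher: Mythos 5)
Your proposal is correct and follows essentially the same route as the paper: the paper's proof likewise applies Proposition~\ref{prop:2} to get that $_RS$ is finitely generated, concludes $_RS$ is left artinian, and then observes that $_SS$ is left artinian; you have merely spelled out the two standard facts (finitely generated modules over artinian rings are artinian, and $S$-submodules are $R$-submodules) that the paper leaves implicit, exactly as in the noetherian analogue Proposition~\ref{prop:finite}.
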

\begin{proof}
By Proposition \ref{prop:2}, $_RS$ is finitely generated and thus $_RS$ is left artinian. In particular, $_SS$ is left artinian.
\end{proof}
Considering the case where $G$ is torsion-free we can use the following general theorem (see \cite[Thm. 9.6.1]{nastasescu2004methods}) to get a converse to Proposition \ref{prop:suff}.
\begin{theorem}
Let $G$ be a torsion-free group and let $S=\bigoplus_{g \in G} S_g$ be a $G$-graded group. If $S$ is a left (right) artinian, then $\text{Supp}(S) < \infty$. 
\label{thm:big}
\end{theorem}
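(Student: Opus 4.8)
The statement is classical; it is recorded as \cite[Thm. 9.6.1]{nastasescu2004methods}, which the paper invokes directly. Nevertheless, here is how I would prove it. The plan is to argue the contrapositive: assuming $\text{Supp}(S)$ is infinite, I would produce an infinite strictly descending chain of left ideals, contradicting the DCC. Torsion-freeness of $G$ enters through the single fact that every $g \neq e$ has infinite order, so that $e, g, g^2, \dots$ are pairwise distinct; the guiding example is the Laurent polynomial ring $K[t,t^{-1}]$, graded by $\mathbb{Z}$ with $t$ a homogeneous unit of degree $1$, which has infinite support and fails the DCC via the strictly descending chain $\big(K[t,t^{-1}]\big)(1-t)^n$. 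The aim is to show that such ``Laurent-type'' behaviour is forced whenever the support is infinite.

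First I would reduce to a semisimple situation. By the Hopkins--Levitzki theorem, a left Artinian ring is left Noetherian, hence $S$ has finite length as a left module over itself; in particular every finitely generated graded left $S$-module is generated by finitely many homogeneous elements, so its support is contained in finitely many translates of $\text{Supp}(S)$. Using Bergman's observation recalled in the introduction---that the Jacobson radical $J=J(S)$ of a $\mathbb{Z}$-graded ring is a graded ideal, and that it is nilpotent since $S$ is Artinian---I would filter $S$ by the powers $S \supseteq J \supseteq J^2 \supseteq \dots \supseteq J^m = 0$. Each layer $J^i/J^{i+1}$ is a finitely generated graded module over the graded semisimple ring $\bar S = S/J$. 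Consequently, once one knows that $\text{Supp}(\bar S)$ is finite, each layer has finite support, and therefore so does $S$. Thus the whole problem reduces to the semisimple case.

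For the semisimple ring $\bar S$ I would show that every homogeneous element $x \in \bar S_g$ with $g \neq e$ is nilpotent. If not, then applying Fitting's lemma to right multiplication by $x$ on the finite-length module $\bar S$ yields a nonzero graded direct summand $U = \bar S x^N = \bar S \bar f$, with $\bar f \in \bar S_e$ a homogeneous idempotent, on which right multiplication by $x$ is bijective. Equivalently, the corner ring $\bar f \bar S \bar f$ (again graded, again semisimple Artinian) contains a homogeneous unit of the infinite-order degree $g$; passing to a simple summand with division-ring diagonal, this produces a skew-Laurent subring, and hence exactly the ``Laurent-type'' configuration whose strictly descending chain $(1-u)^n$ contradicts the DCC. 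Once all homogeneous elements of degree $\neq e$ are nilpotent, the off-diagonal components generate nilpotent ideals; since $J(\bar S)=0$, this forces $\text{Supp}(\bar S)=\{e\}$, which is certainly finite.

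The main obstacle is the passage from $\mathbb{Z}$ to an arbitrary torsion-free group. For $G=\mathbb{Z}$ the reduction above is clean, because Bergman's theorem guarantees that the radical is graded and the single infinite-order direction is visible. For a general torsion-free $G$ neither of these is automatic: such groups need not be orderable nor admit any nontrivial homomorphism to $\mathbb{Z}$, so one cannot globally linearise the grading. The correct remedy is to localise the argument to the infinite cyclic subgroup $\langle g \rangle \cong \mathbb{Z}$ generated by a nonidentity degree $g$ in the support and to run the ``homogeneous unit of infinite order forces non-Artinianity'' argument there. Making this local-to-global step rigorous for every torsion-free $G$ is precisely the technical heart of \cite[Thm. 9.6.1]{nastasescu2004methods}, which for that reason I would (as the paper does) invoke as a black box.
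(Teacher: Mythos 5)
Your bottom line---invoking \cite[Thm.~9.6.1]{nastasescu2004methods} as a black box---coincides exactly with what the paper does: the paper offers no proof of Theorem~\ref{thm:big} beyond that citation and a remark reducing the right-handed case to the left-handed one via the opposite ring. So on that score you are fine. The problem is that the sketch you offer in between contains a step that is simply false, namely the endgame of your semisimple case: ``once all homogeneous elements of degree $\neq e$ are nilpotent, the off-diagonal components generate nilpotent ideals; since $J(\bar S)=0$, this forces $\text{Supp}(\bar S)=\{e\}$.'' Take $\bar S = M_2(K)$ with the good $\mathbb{Z}$-grading $\deg E_{11}=\deg E_{22}=0$, $\deg E_{12}=1$, $\deg E_{21}=-1$. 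This is a simple artinian ring graded by a torsion-free group; every homogeneous element of nonzero degree is square-zero, yet the ideal generated by the off-diagonal components contains $E_{12}E_{21}+E_{21}E_{12}=1$, so it is the whole ring and certainly not nilpotent, and $\text{Supp}(\bar S)=\{-1,0,1\}\neq\{0\}$. Indeed the conclusion $\text{Supp}(\bar S)=\{e\}$ \emph{cannot} be what comes out of a correct argument, since the theorem asserts only finiteness of the support, and graded semisimple rings with nontrivial support abound---the paper's own Corollary~\ref{cor:full_semi} manufactures semisimple Leavitt path algebras whose canonical $\mathbb{Z}$-grading has support strictly larger than $\{0\}$. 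So even granting everything that precedes it, your reduction does not prove the semisimple case: nilpotency of all homogeneous elements of non-identity degree is true there but insufficient.

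Two secondary points. First, the part of your argument that actually works can be done more cleanly and with fewer hypotheses than you use: if $u\in S_g$ is a homogeneous unit and $g$ has infinite order, then by comparing components along the coset decomposition of $G$ modulo $\langle g\rangle$ (using that right multiplication by the unit $u$ is injective), one checks that $1-u$ is not left invertible and has zero left annihilator; hence $S(1-u)^n$ is a \emph{strictly} descending chain of left ideals of $S$ itself. This needs no domain, no ``division-ring diagonal'' (which is not available anyway: the degree-zero part of a graded simple artinian ring need not be a division ring, again by $M_2(K)$), and no passage to a skew-Laurent \emph{subring}---which is important, because the DCC does not pass to subrings, so a strictly descending chain of ideals of a subring would not by itself contradict artinianity of $\bar S$. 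Second, as you acknowledge, Bergman's theorem that $J(S)$ is graded is a $\mathbb{Z}$-graded fact; for a general torsion-free $G$ it is not available, and that local-to-global passage is precisely the content of the cited theorem. Net assessment: citing \cite[Thm.~9.6.1]{nastasescu2004methods}, as the paper does, is acceptable; the sketch as written is not a proof, and its semisimple endgame is irreparable in its current form.
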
 
\begin{remark}
We show that the right case of Theorem \ref{thm:big} follows from the left case. Letting $S$ be a right artinian ring, the opposite ring $S^o$ is left artinian. Furthermore, the ring $S^o$ is $G$-graded by $(S^o)_g = S_{g^{-1}}$ (cf. \cite[Rmk. 1.2.4]{nastasescu2004methods}). With this grading, $|\text{Supp}(S^o) | = | \text{Supp}(S) |$. 
But the left case of Theorem \ref{thm:big} implies $\text{Supp}(S^o) < \infty$ and thus $\text{Supp}(S) < \infty$.
\end{remark}

We are now ready to prove our characterization of artinian epsilon-strongly graded rings.

\begin{proof}[Proof of Theorem \ref{thm:a}]
The `if' direction follows from Proposition \ref{prop:suff}. For the other direction, assume $S$ is a left (right) artinian epsilon-strongly graded ring. First note that by Proposition \ref{prop:artinian}, the principal component $R=S_e$ is left (right) artinian. Secondly, by Theorem \ref{thm:big}, $\text{Supp}(S) < \infty$, which is equivalent to $S_g = \{ 0 \}$ for all but finitely many $g \in G$. 
\end{proof}

\begin{remark}
Passman's example \cite{passman1970radicals} of an artinian twisted group rings by an infinite $p$-group shows that Theorem \ref{thm:a} and Theorem \ref{thm:big} do not hold for arbitrary groups.
\end{remark}

\section{Applications to Leavitt path algebras}
\label{sec:4}

A \emph{directed graph} $E=(E^0, E^1, s, r)$ consists of a set of vertices $E^0$, a set of edges $E^1$ and maps $s \colon E^1 \to E^0$, $r \colon E^1 \to E^0$ specifying the source $s(f)$ and range $r(f)$ vertex for each edge $f \in E^1$. We call a graph $E$ \emph{finite} if $E^0$ and $E^1$ are finite sets. A \emph{path} is a sequence of edges $\alpha=f_1 f_2 \dots f_n$ such that $s(f_{i+1})=r(f_i)$ for $1 \leq i \leq n-1$. We write $s(\alpha)=s(f_1)$ and $r(\alpha)=r(f_n)$. A \emph{cycle} is a path such that $s(f_1)=r(f_n)$ and $s(f_i) \neq s(f_1)$ for $2 \leq i \leq n$. 

\begin{definition}
For a directed graph $E= (E^0, E^1, s, r)$ and a ring $R$, the \textit{Leavitt path algebra with coefficients in $R$} is the $R$-algebra generated by the symbols $\{ v \mid v \in E^0 \}$, $\{ f \mid f \in E^1 \}$ and $\{ f^* \mid f \in E^1 \}$ subject to the following relations,
\begin{enumerate}[(a)]
\begin{item}
$ v_i v_j = \delta_{i,j} v_i$ for all $v_i, v_j \in E^0$,
\end{item}
\begin{item}
$s(f) f = f r(f)=f$ and $r(f)f^* = f^*s(f)=f^*$  for all $f \in E^1$,
\end{item}
\begin{item}
$f^* f' = \delta_{f, f'} r(f)$ for all $f, f' \in E^1$,
\end{item}
\begin{item}
$\sum_{f \in E^1, s(f)=v}f f^* =v $ for all $v \in E^0$ for which $s^{-1}(v)$ is non-empty and finite. 
\end{item}
\end{enumerate}
We let $R$ commute with the generators. 
\label{def:lpa}
\end{definition}

The symbols $f \in E^1$ are called \emph{real edges} and the symbols $f^*$ for $f \in E^1$ are called \emph{ghost edges}. For a real path $\alpha= f_1 f_2 f_3 \dots f_n$ the corresponding \emph{ghost path} is $\alpha^* = f_n^* f_{n-1}^* \dots f_3^* f_2^* f_1^*.$ In particular, $s(\alpha)=r(\alpha^*)$ and $r(\alpha)=s(\alpha^*)$. 
Using the relations above it can be proved that a general element in $L_R(E)$ has the form of a finite sum $\sum r_i \alpha_i \beta_i^*$ where $r_i \in R$, $\alpha_i$ and $\beta_i$ are real paths such that $r(\alpha_i)=s(\beta_i^*)=r(\beta_i)$ for every $i$. For a path $\alpha= f_1 f_2 f_3 \dots f_n$, let $\text{len}(\alpha)=n$ denote the length of $n$.  

For any group $G$ there is a class of $G$-gradings called the standard gradings of $L_R(E)$ (see for example \cite{nystedt2017epsilon}). Taking $G=\mathbb{Z}$ we obtain the so-called canonical $\mathbb{Z}$-grading of $L_R(E)$. In this case, the homogeneous components are given by, 
\begin{equation}
 (L_R(E))_n = \Big \{ \sum r_i \alpha_i \beta_i^* \mid r_i \in R, \text{len}(\alpha_i)-\text{len}(\beta_i) = n \Big \},
\label{eq:2}
\end{equation}
for $n \in \mathbb{Z}$. 
It is proved in \cite[Thm. 3.3]{nystedt2017epsilon} that every standard $G$-grading of $L_R(E)$ is symmetric and furthermore when $E$ is finite, every standard $G$-grading is epsilon-strong. In the sequel, we will consider the Leavitt path algebra $L_R(E)$ equipped with the canonical $\mathbb{Z}$-grading. If $E$ is finite, then $L_R(E)$ is an epsilon-strongly $\mathbb{Z}$-graded ring. 

\bigskip

In this section, we will characterize when $L_R(E)$ is noetherian and artinian under the assumption that $E$ is a finite graph.

\begin{definition}
A graph $E$ is said to satisfy \emph{Condition (NE)} if there is no cycle with an exit. 
\end{definition}

This condition is in fact a large restriction on the paths in the graph.

\begin{lemma}(\cite[Lem. 1.2]{abrams2008locally})
If $E$ is a finite graph satisfying Condition (NE), then any path $\mu$ with $| \mu | > | E^0 |$ ends in a cycle. 
\label{lem:fin_ends}
\end{lemma}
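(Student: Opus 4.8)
The plan is to combine a pigeonhole argument on the vertices traversed by $\mu$ with the rigidity forced by Condition (NE), namely that a vertex lying on a cycle must have out-degree one. Concretely, write $\mu = f_1 f_2 \cdots f_n$ with $n = |\mu| > |E^0|$ and record the sequence of vertices $v_0 = s(f_1)$ together with $v_k = r(f_k)$ for $1 \le k \le n$. This is a list of $n+1 > |E^0|$ elements of the finite set $E^0$, so by the pigeonhole principle two of them coincide: there exist indices $i < j$ with $v_i = v_j$. Since $s(f_{i+1}) = v_i$ and $r(f_j) = v_j = v_i$, the edges $f_{i+1} \cdots f_j$ form a closed path based at $v_i$.

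First I would show that the existence of a closed path at a vertex $w$ forces $w$ to lie on an honest cycle. The argument is to take a closed path at $w$ of \emph{minimal} length: if some interior vertex equalled $w$ one could truncate to a strictly shorter closed path at $w$, and if two interior vertices coincided one could excise the intervening segment, in either case contradicting minimality. Hence a minimal closed path at $w$ has pairwise distinct vertices and is a cycle through $w$ in the strict sense of Definition of a cycle. Applying this to $v_i$ yields a cycle $c$ having $v_i$ among its vertices.

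Next I would propagate forward along $\mu$ using Condition (NE). Since $v_i$ lies on the cycle $c$ and $c$ has no exit, the only edge with source $v_i$ is the edge of $c$ leaving $v_i$; as $s(f_{i+1}) = v_i$, this forces $f_{i+1}$ to be that cycle edge and $v_{i+1} = r(f_{i+1})$ to be the next vertex of $c$. Iterating, an easy induction shows that each of $v_i, v_{i+1}, \dots, v_n$ lies on $c$ and each of $f_{i+1}, \dots, f_n$ is an edge of $c$. Because $i \le j-1 \le n-1$, the terminal segment $f_{i+1} \cdots f_n$ is nonempty and lies entirely on the single cycle $c$, which is exactly the assertion that $\mu$ ends in a cycle.

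The main obstacle — and the only step requiring genuine care — is the extraction of a bona fide cycle (with distinct intermediate vertices) from the closed subpath produced by pigeonhole, since an arbitrary closed walk need not itself be a cycle; the minimal-length trick above handles this cleanly. A secondary point to state explicitly before the forward induction is the identification of ``$c$ has no exit'' with ``every vertex of $c$ has out-degree one,'' which is immediate from the definition of an exit but is the precise form of Condition (NE) that the induction consumes.
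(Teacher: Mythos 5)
Your proof is correct. Note that the paper itself gives no argument for this lemma --- it is quoted directly from \cite[Lem.~1.2]{abrams2008locally} --- so there is nothing internal to compare against; your argument is the standard one (pigeonhole on the $|\mu|+1$ visited vertices, then Condition (NE) forcing out-degree one along the cycle), and you correctly handle the one genuine subtlety, namely extracting a bona fide cycle with distinct vertices from the closed walk produced by pigeonhole, which is needed before Condition (NE) can be invoked and which also yields the forward propagation step $f_{i+1},\dots,f_n$ being cycle edges, i.e.\ precisely the sense of ``ends in a cycle'' that the paper consumes in the proof of Proposition \ref{prop:fin_gen}.
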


We will begin by showing that Condition (NE) allows us to lift algebraic properties of the base ring $R$ to the principal component $(L_R(E))_0$. 

\smallskip

Following \cite[Thm. 1.4]{abrams2008locally}, for an integer $m \geq 0$, let,
\begin{equation*}
C_m = \text{Span}_R \{ \alpha \beta^* \mid \text{len}(\alpha)=\text{len}(\beta)=m \}.
\end{equation*}
Note that by (\ref{eq:2}), $(L_R(E))_0 = \bigcup_{m=0}^\infty C_m$ for any graph $E$. If $E$ is a finite graph, then there are only finitely many paths of a fixed length. Hence, if $E$ is a finite graph, then $C_m$ is a finitely generated $R$-module for each non-negative integer $m$.

\begin{lemma}
(cf. \cite[Thm. 1.4]{abrams2008locally}) If for some integer $t \ge 0$, $C_{t+1} \subseteq \bigcup_{m=0}^t C_m$, then $\bigcup_{m=0}^\infty C_m = \bigcup_{m=0}^t C_m.$
\label{lem:c_inc}
\end{lemma}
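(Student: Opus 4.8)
The plan is to prove, by induction on $s$, that $C_s \subseteq \bigcup_{m=0}^t C_m$ for every $s \ge 0$, from which the asserted equality $\bigcup_{m=0}^\infty C_m = \bigcup_{m=0}^t C_m$ is immediate. Here it is cleanest to read $\bigcup_{m=0}^t C_m$ as the $R$-submodule $D_t := \sum_{m=0}^t C_m$ (this is also the sense in which $(L_R(E))_0 = \bigcup_{m=0}^\infty C_m$ should be understood, since the left-hand side is a module). The $D_t$ form an ascending chain $D_0 \subseteq D_1 \subseteq \cdots$ of $R$-modules, and the hypothesis $C_{t+1} \subseteq D_t$ says precisely that $D_{t+1} = D_t$; the goal is to show the chain is then constant from $t$ onward, i.e. $C_s \subseteq D_t$ for all $s$. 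For $s \le t$ this is trivial and for $s = t+1$ it is the hypothesis, so the whole content sits in the inductive step: assuming $C_{s-1} \subseteq D_t$ for some $s \ge t+2$, deduce $C_s \subseteq D_t$.

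The engine of the induction is a length-lowering factorisation of the spanning monomials together with its reverse. Given a generator $\alpha\beta^*$ of $C_s$ with $\text{len}(\alpha)=\text{len}(\beta)=s \ge 2$, I would peel off the initial edges, writing $\alpha = f\alpha'$ and $\beta = g\beta'$ with $f,g \in E^1$ and $\text{len}(\alpha')=\text{len}(\beta')=s-1$. Since $\beta^* = (\beta')^* g^*$, this yields $\alpha\beta^* = f\bigl(\alpha'(\beta')^*\bigr)g^*$, where $\alpha'(\beta')^*$ is a generator of $C_{s-1}$. By the induction hypothesis $\alpha'(\beta')^* \in D_t$, so I may write $\alpha'(\beta')^* = \sum_{m=0}^t x_m$ with $x_m \in C_m$. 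Re-multiplying by $f$ on the left and $g^*$ on the right gives $\alpha\beta^* = \sum_{m=0}^t f x_m g^*$, and the identity $f\gamma\delta^*g^* = (f\gamma)(g\delta)^*$ places each term $f x_m g^*$ in $C_{m+1}$; hence $\alpha\beta^* \in \sum_{m=1}^{t+1} C_m \subseteq D_{t+1}$. It is exactly at this last point that the hypothesis is invoked to fold the top layer back down, $D_{t+1} = D_t$, so that $\alpha\beta^* \in D_t$ and therefore $C_s \subseteq D_t$, completing the step.

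The step I expect to demand the most care is the interplay between the union and the module structure, because the $C_m$ do \emph{not} in general form an increasing chain: at a sink $v$ one has $v \in C_0$ but $v \notin C_1$, so $C_m \not\subseteq C_{m+1}$. This is why $\bigcup_{m=0}^t C_m$ must be handled as the submodule $D_t = \sum_{m=0}^t C_m$, and why I would isolate, once and for all, the single structural fact that both operations used above respect the filtration, namely $f\,C_m\,g^* \subseteq C_{m+1}$ for all edges $f,g$ and all $m$. This containment follows cleanly from $f\gamma\delta^*g^* = (f\gamma)(g\delta)^*$: the product is either $0$ (when $r(f)\ne s(\gamma)$ or $r(g)\ne s(\delta)$) or a genuine monomial of length $m+1$, and in either case it lies in the $R$-module $C_{m+1}$. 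With this observation recorded, everything else in the argument is bookkeeping on path lengths together with the single use of the hypothesis $C_{t+1}\subseteq D_t$ to collapse the $(t{+}1)$-st layer.
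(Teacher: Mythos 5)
Your proof is correct and follows essentially the same route as the paper's: induction on the length $s$, peeling the first edge off each path so that the inner monomial lands in $C_{s-1}$, applying the induction hypothesis, and using the containment $f\,C_m\,g^* \subseteq C_{m+1}$ together with the hypothesis $C_{t+1} \subseteq \bigcup_{m=0}^t C_m$ to fold the top layer back down. Your extra care in reading $\bigcup_{m=0}^t C_m$ as the submodule sum $\sum_{m=0}^t C_m$ is a sensible refinement (the $C_m$ are indeed not nested, e.g.\ at sinks), a point the paper's notation glosses over, but it does not change the substance of the argument.
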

\begin{proof}
Assume that $C_{t+1} \subseteq \bigcup_{m=0}^t C_m$. We show that $C_{t+i} \subseteq \bigcup_{m=0}^t C_m$ for any $i$ by induction.

Now, assume that $C_{t+k} \subseteq \bigcup_{m=0}^t C_m$ for some $k$ and let $\alpha = e_1 e_2 \dots e_{t+k+1} f_{t+k+1}^* f_{k}^*\dots f_2^* f_1^*$. Letting $\beta = e_2 \dots e_{t+k+1} f_{t+k+1}^* \dots f_2^*$ we have that,
\begin{equation*}
\alpha = e_1 \beta f_1^* \in e_1 C_{t+k} f_1^* \subseteq e_1 \Big ( \bigcup_{m=0}^t C_m \Big ) f_1^* \subseteq \bigcup_{m=1}^{t+1} C_m \subseteq \bigcup_{m=0}^t C_m,
\end{equation*} where the last inclusion is a consequence of the assumption $C_{t+1} \subseteq \bigcup_{m=0}^t C_m$.
\end{proof}

\begin{proposition}
If $E$ is a finite graph satisfying Condition (NE), then there exists some non-negative integer $k$ such that
$(L_R(E))_0= \bigcup_{m=0}^{k} C_m$. In particular, $(L_R(E))_0$ is a finitely generated $R$-module.
\label{prop:fin_gen}
\end{proposition}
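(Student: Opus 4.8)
The plan is to show that under Condition (NE), the ascending chain $C_0 \subseteq \bigcup_{m=0}^1 C_m \subseteq \bigcup_{m=0}^2 C_m \subseteq \dots$ must stabilize, and then invoke Lemma \ref{lem:c_inc} to conclude that $(L_R(E))_0 = \bigcup_{m=0}^\infty C_m$ collapses to a finite union $\bigcup_{m=0}^k C_m$. Once we have such a $k$, finite generation is immediate: each $C_m$ is spanned by the finitely many products $\alpha\beta^*$ with $\mathrm{len}(\alpha)=\mathrm{len}(\beta)=m$ (there are finitely many paths of each fixed length since $E$ is finite), so a finite union of finitely generated $R$-modules is finitely generated. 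Thus the crux is producing the integer $k$, equivalently verifying the hypothesis $C_{t+1} \subseteq \bigcup_{m=0}^t C_m$ of Lemma \ref{lem:c_inc} for some $t$.

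To produce $k$, I would take $t = |E^0|$ and aim to show $C_{t+1} \subseteq \bigcup_{m=0}^t C_m$. A spanning element of $C_{t+1}$ has the form $\alpha\beta^*$ with $\mathrm{len}(\alpha)=\mathrm{len}(\beta)=t+1$, and by the relation $\beta^*\beta' = \delta r(\beta)$ together with $r(\alpha)=r(\beta)$, the only surviving products are those where the ranges match. The key structural input is Lemma \ref{lem:fin_ends}: since $\mathrm{len}(\alpha) = t+1 > |E^0|$, the path $\alpha$ eventually enters a cycle, and similarly for $\beta$. Condition (NE) guarantees this cycle has no exit, so once a path reaches the cycle it is forced to traverse it with no branching. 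The plan is to use this rigidity to rewrite $\alpha\beta^*$ in terms of paths of strictly smaller length: if both $\alpha$ and $\beta$ run along the same exit-free cycle in their tails, then a full loop of the cycle acts as (a multiple of) an idempotent vertex, allowing cancellation of one period and reducing the common length below $t+1$, landing the element in $\bigcup_{m=0}^t C_m$.

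The main obstacle I anticipate is the careful bookkeeping of the cycle-reduction step: I must handle the cases where $\alpha$ and $\beta$ end in the \emph{same} cycle versus different cycles (in the latter case the ranges $r(\alpha), r(\beta)$ differ or the tails are incompatible, so many products vanish), and I must make precise how a no-exit cycle $c$ of length $\ell$ satisfies $cc^* = s(c)$ or an analogous collapse that shortens the expression. This is exactly where Condition (NE) is essential — without it, an exit would allow genuinely new paths of every length and the chain $\bigcup_{m=0}^t C_m$ would not stabilize (which is consistent with the non-noetherian examples). I would organize the argument so that Lemma \ref{lem:fin_ends} supplies the eventual-cycle structure and Condition (NE) supplies the branch-freeness, reducing every length-$(t+1)$ generator to a combination of shorter ones, thereby verifying the hypothesis of Lemma \ref{lem:c_inc} and completing the proof.
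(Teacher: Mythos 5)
Your proposal is correct and takes essentially the same route as the paper: both arguments verify the hypothesis of Lemma \ref{lem:c_inc} by combining Lemma \ref{lem:fin_ends} with the rigidity of no-exit cycles (a vertex lies on at most one cycle, and tails with equal range must coincide) and the CK-relation collapse at single-exit vertices. The only difference is cosmetic: the paper cancels just the final shared edge, using $e_n e_n^* = s(e_n)$ to get $C_k \subseteq C_{k-1}$, whereas you cancel a full loop $\gamma$ via $\gamma\gamma^* = s(\gamma)$; both reductions rest on the same observation about the shared terminal segment of $\alpha$ and $\beta$.
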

\begin{proof}
Assume that $E$ satisfies condition (NE). We will show that the condition in Lemma \ref{lem:c_inc} holds. 
By Lemma \ref{lem:fin_ends} there is an integer $k > 1$ such that any path $\mu$ in $E$ with $\text{len}(\mu) \geq k$ ends in a cycle. Let $\alpha \beta^* \ne 0 \in C_k$. Then since $r(\alpha)=r(\beta)$ and condition (NE) holds, $\alpha$ and $\beta$ both ends in the same cycle. Let, $\alpha = \alpha' e_1 e_2 \dots e_n$ and $\beta = \beta' e_1 e_2 \dots e_n$ for some edges $e_1, \dots, e_n$. Furthermore, since the cycle does not have an exit, it follows from Definition 4.1(d) that $e_n e_n^* = s(e_n)$. Hence, 
\begin{equation*}
\alpha \beta^* = \alpha' e_1 e_2 \dots e_{n-1} (e_n e_n^*) e_{n-1}^* \dots e_1^* (\beta')^* = \alpha' e_1 \dots e_{n-1} e_{n-1}^* \dots e_1^* (\beta')^* \in C_{k-1}.
\end{equation*} This proves that $C_{k} \subseteq C_{k-1}$. 

Now, Lemma \ref{lem:c_inc} yields, $(L_R(E))_0 = \bigcup_{m=0}^{k-1} C_m$. In particular,  $(L_R(E))_0$ is finitely generated since each $C_m$ is finitely generated.
\end{proof}

As a corollary we obtain that noetherianity lifts from the base ring $R$ to $(L_R(E))_0$. 

\begin{corollary}
Let $E$ be a finite graph and let $R$ be a left (right) noetherian ring. If $E$ satisfies Condition (NE), then $(L_R(E))_0$ is left (right) noetherian as a ring.
\label{cor:noeth_ring}
\end{corollary}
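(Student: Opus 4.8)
The plan is to combine the previous proposition with the standard fact that noetherianity passes to modules and submodules. By Proposition~\ref{prop:fin_gen}, Condition (NE) together with finiteness of $E$ guarantees that $(L_R(E))_0 = \bigcup_{m=0}^k C_m$ for some $k \geq 0$, and that this is a \emph{finitely generated} $R$-module. The key observation is that $(L_R(E))_0$ is simultaneously an $R$-module and a ring containing $R$ (via the coefficient embedding), so a chain of left (right) ideals of the ring $(L_R(E))_0$ is in particular a chain of $R$-submodules.

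First I would invoke Proposition~\ref{prop:fin_gen} to conclude that $M := (L_R(E))_0$ is a finitely generated left (right) $R$-module. Next, using the standard characterization recalled at the start of Section~\ref{sec:3}---that over a left (right) noetherian ring $R$, every finitely generated left (right) $R$-module is left (right) noetherian---I would conclude that $M$ satisfies the ascending chain condition on its $R$-submodules. Finally, since every left (right) ideal of the ring $(L_R(E))_0$ is, a fortiori, a left (right) $R$-submodule of $M$ (because $R$ sits inside $(L_R(E))_0$ and acts by the ring multiplication, which restricts to the $R$-module structure), any ascending chain of left (right) ideals of the ring $(L_R(E))_0$ is an ascending chain of $R$-submodules and hence stabilizes. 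Therefore $(L_R(E))_0$ is left (right) noetherian as a ring.

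I expect no serious obstacle here; the corollary is essentially a formal consequence of Proposition~\ref{prop:fin_gen}. The only point requiring a moment's care is the compatibility between the ring structure on $(L_R(E))_0$ and its $R$-module structure---one must check that $R$-submodules contain the ring-theoretic ideals, i.e.\ that scalar multiplication by $R$ agrees with left (right) multiplication by the image of $R$ inside $(L_R(E))_0$. This is immediate from the construction of $L_R(E)$ in Definition~\ref{def:lpa}, where $R$ is required to commute with the generators and thereby embeds centrally, so every ring ideal is automatically an $R$-submodule. With that observation in place the proof is a one-line application of the finitely-generated-implies-noetherian principle.
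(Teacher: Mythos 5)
Your proof is correct and follows the same route as the paper: apply Proposition~\ref{prop:fin_gen} to see that $(L_R(E))_0$ is a finitely generated $R$-module, use that finitely generated modules over a left (right) noetherian ring are noetherian, and observe that ring ideals of $(L_R(E))_0$ are $R$-submodules. The paper leaves the last compatibility check implicit, which you spell out; otherwise the arguments are identical.
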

\begin{proof}
Assume that $E$ satisfies Condition (NE). By Proposition \ref{prop:fin_gen}, $(L_R(E))_0$ is finitely generated. Since $R$ is assumed to be left (right) noetherian, $_R (L_R(E))_0, (((L_R(E))_0)_R)$ is noetherian and, in particular $(L_R(E))_0$ is a left (right) noetherian ring. 
\end{proof}

Next, we will show that Condition (NE) in some sense restricts the number of idempotents in $(L_R(E))_0$. In particular, we show that Condition (NE) is a necessary condition for $(L_R(E))_0$ to be one-sided noetherian.

\begin{proposition}
If $E$ does not satisfy Condition (NE), then there exists an infinite sequence of distinct, pairwise orthogonal idempotents $\{ \mu_n \mu_n^* \}_{n \in \mathbb{N}} $  in $(L_R(E))_0$.
If this is the case, then $(L_R(E))_0$ is not left or right noetherian.
\label{prop:idem_exist}
\end{proposition}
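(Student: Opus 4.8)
The plan is to treat the two assertions of the statement separately. The first is an explicit construction of the idempotent family from a cycle with an exit, carried out entirely with the Leavitt relations of Definition \ref{def:lpa}; the second is a standard consequence of the existence of infinitely many orthogonal idempotents.

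To begin, I would unpack the failure of Condition (NE): there is a cycle $c = f_1 f_2 \cdots f_k$ together with an exit, i.e.\ an index $i$ and an edge $g$ with $s(g) = s(f_i)$ and $g \neq f_i$. Rotating $c$ so that it is based at $v := s(f_i)$, I may assume $s(c) = r(c) = v = s(g)$ and that $g$ differs from the first edge $f_1$ of $c$. I then set $\mu_n := c^n g$ for $n \in \mathbb{N}$; each $\mu_n$ is a genuine real path from $v$ to $r(g)$, and since $\text{len}(\mu_n) - \text{len}(\mu_n) = 0$, formula (\ref{eq:2}) places each $\mu_n \mu_n^*$ in $(L_R(E))_0$.

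The computational core is verifying idempotency, nonvanishing, and orthogonality. Iterating relation (c) yields $\nu^* \nu = r(\nu)$ for every real path $\nu$; applied to $\nu = \mu_n$ this gives $\mu_n \mu_n^* \mu_n \mu_n^* = \mu_n\, r(\mu_n)\, \mu_n^* = \mu_n \mu_n^*$, so each $\mu_n \mu_n^*$ is idempotent, and it is nonzero because $\mu_n \mu_n^* = 0$ would force $r(\mu_n) = (\mu_n^* \mu_n)^2 = 0$, contradicting the standard fact that vertices of $L_R(E)$ are nonzero. For orthogonality with $n < m$ I would compute $\mu_n^* \mu_m = g^* (c^*)^n c^m g = g^*\, r(c^n)\, c^{m-n} g = g^* c^{m-n} g$; since $c^{m-n}$ begins with $f_1$, the leftmost factor $g^* f_1 = \delta_{g,f_1} r(g) = 0$ annihilates the product, and the mirror computation gives $\mu_m^* \mu_n = g^* (c^*)^{m-n} g = 0$ via $f_1^* g = 0$. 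Hence $\mu_n \mu_n^*\, \mu_m \mu_m^* = \mu_n (\mu_n^* \mu_m) \mu_m^* = 0$ and likewise with the indices swapped, so the idempotents are pairwise orthogonal; being nonzero and orthogonal, they are automatically distinct.

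For the second assertion I would invoke the textbook fact that a ring $A$ possessing nonzero pairwise orthogonal idempotents $\{e_n\}_{n}$ satisfies neither chain condition. Explicitly, the left ideals $Ae_1 \subseteq Ae_1 + Ae_2 \subseteq \cdots$ strictly ascend: from $e_{n+1} \in \sum_{i \le n} Ae_i$ one would get, after right multiplication by $e_{n+1}$ and using $e_i e_{n+1} = 0$, that $e_{n+1} = e_{n+1}^2 = 0$, a contradiction. This shows $(L_R(E))_0$ is not left noetherian, and the symmetric chain $\sum_{i \le n} e_i A$ shows it is not right noetherian. The only place demanding genuine care is the orthogonality bookkeeping in the preceding paragraph --- in particular justifying the rotation that puts the exit at the base vertex (so that $g \neq f_1$) and handling the telescoping $(c^*)^n c^m = r(c^n)\, c^{m-n}$ cleanly; the remaining steps are direct applications of the defining relations or the standard orthogonal-idempotent argument.
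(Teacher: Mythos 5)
Your proposal is correct and takes essentially the same approach as the paper: both rotate the cycle so the exit sits at its base vertex and form the idempotents $\mu_n\mu_n^* = c^n g\, g^*(c^*)^n$, which the paper writes as $\gamma^n\alpha\alpha^*(\gamma^*)^n$ for an exit path $\alpha$. The only cosmetic differences are that you deduce distinctness from nonzero-ness plus pairwise orthogonality, whereas the paper gets it by multiplying the supposed equality on the left by $\alpha^*(\gamma^*)^m$ (both ultimately resting on vertices being nonzero), and you write out the standard strictly-ascending-chain argument that the paper merely cites as clear.
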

\begin{proof}
Following \cite[Prop. 3.1(ii)]{ArandaPino2013}, assume that there is a cycle $\gamma$ is a with an exit path $\alpha$. By shifting the cycle, we may assume that the base vertex of the cycle is the starting point of the exit. More precisely, we may assume that $\gamma = e_1 e_2 \dots e_n$ for edges $e_i$ such that $s(\gamma)=s(e_1)=s(\alpha)$ and $e_i \ne e_1$ for $2 \leq i \leq n$. Consider the set $\{ \gamma^n \alpha \alpha^* (\gamma^*)^n  \mid n \in \mathbb{N} \}$. It is straightforward to prove these are orthogonal idempotents in $(L_R(E))_0$. To see that they are distinct, suppose to get a contradiction that, 
\begin{equation*}
\gamma^n \alpha \alpha^* (\gamma^*)^n = \gamma^m \alpha \alpha^* (\gamma^*)^m,
\end{equation*} for $n > m$. Then, multiplying with $\alpha^* (\gamma^*)^m$ on the left, 
\begin{equation*}
0 = (\alpha^* \gamma^{n-m}) \alpha \alpha^* (\gamma^*)^n = \alpha^* (\gamma^*)^m \gamma^n \alpha \alpha^* (\gamma^*)^n = \alpha^* (\gamma^*)^m \gamma^m \alpha \alpha^* (\gamma^*)^m = \alpha^* (\gamma^*)^m,
\end{equation*} which is a contradiction. 

The last statement is clear since a one-sided noetherian ring does not contain infinitely many orthogonal idempotents.
\end{proof}

\smallskip

We can now prove our main characterization of noetherian Leavitt path algebras, generalizing \cite[Thm. 3.10]{abrams2008locally}.

\begin{corollary}
Let $E$ be a finite graph and let $R$ be a left (right) noetherian ring. Let the Leavitt path algebra $L_R(E)$ be graded with the canonical $\mathbb{Z}$-grading. Then the following are equivalent:
\begin{enumerate}[(a)]
\begin{item}
$E$ satisfies Condition (NE),
\end{item}
\begin{item}
$(L_R(E))_0$ is left (right) noetherian,
\end{item}
\begin{item}
$L_R(E)$ is left (right) noetherian.
\end{item}
\end{enumerate}
\label{cor:lpa_noeth}
\end{corollary}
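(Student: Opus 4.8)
The plan is to prove the three-way equivalence by first establishing (b) $\Leftrightarrow$ (c) from the general theory, and then closing the loop through (a) by means of the two propositions describing the principal component $(L_R(E))_0$.

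First I would record that, since $E$ is a finite graph, the Leavitt path algebra $L_R(E)$ equipped with the canonical $\mathbb{Z}$-grading is epsilon-strongly $\mathbb{Z}$-graded, with principal component $(L_R(E))_0$, as noted after (\ref{eq:2}). Because $\mathbb{Z}$ is polycyclic-by-finite, Theorem \ref{thm:n} applies directly and shows that $L_R(E)$ is left (right) noetherian if and only if $(L_R(E))_0$ is left (right) noetherian. This yields (b) $\Leftrightarrow$ (c) immediately, using neither Condition (NE) nor any chain condition on $R$.

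It remains to connect (a) with (b). For (a) $\Rightarrow$ (b), I would invoke Corollary \ref{cor:noeth_ring}: under the standing hypotheses that $E$ is finite and $R$ is left (right) noetherian, Condition (NE) forces $(L_R(E))_0$ to be a finitely generated $R$-module by Proposition \ref{prop:fin_gen}, and hence left (right) noetherian as a ring. For the converse (b) $\Rightarrow$ (a), I would argue contrapositively: if $E$ fails Condition (NE), then Proposition \ref{prop:idem_exist} produces an infinite sequence of distinct, pairwise orthogonal idempotents in $(L_R(E))_0$, which violates both the ascending chain condition on left ideals and on right ideals; thus $(L_R(E))_0$ is neither left nor right noetherian, so (b) fails.

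Assembling these implications gives (a) $\Leftrightarrow$ (b) $\Leftrightarrow$ (c). Since each ingredient is already in place, I do not expect a genuine obstacle; the only point worth emphasizing is the division of labour between the steps, namely that (b) $\Leftrightarrow$ (c) rests purely on the epsilon-strong $\mathbb{Z}$-grading (hence on finiteness of $E$ via \cite{nystedt2017epsilon}), while the link to (a) is precisely where the noetherianity of $R$ and the graph-theoretic Condition (NE) enter, through Corollary \ref{cor:noeth_ring} and Proposition \ref{prop:idem_exist}.
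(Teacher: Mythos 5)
Your proposal is correct and uses exactly the same ingredients as the paper's proof: Corollary \ref{cor:noeth_ring} for the link from Condition (NE) to noetherianity of $(L_R(E))_0$, Theorem \ref{thm:n} (via the epsilon-strong $\mathbb{Z}$-grading of \cite{nystedt2017epsilon}) for the passage between $(L_R(E))_0$ and $L_R(E)$, and Proposition \ref{prop:idem_exist} for the converse direction. The only difference is organizational — the paper proves the cycle $(a) \Rightarrow (b) \Rightarrow (c) \Rightarrow (a)$ while you prove $(b) \Leftrightarrow (c)$ and $(a) \Leftrightarrow (b)$ separately — which is immaterial.
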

\begin{proof}
$(a) \implies (b):$  Assume that $E$ has no cycle with an exit. Since $R$ is left (right) noetherian, by Corollary \ref{cor:noeth_ring}, $(L_R(E))_0$ is left (right) noetherian.

$(b) \implies (c):$ Since the canonical $\mathbb{Z}$-grading of $L_R(E)$ is epsilon-strong (see \cite[Thm. 3.3]{nystedt2017epsilon}), this follows from Theorem \ref{thm:n}.

$(c) \implies (a):$ Assume that $L_R(E)$ is left (right) noetherian. Then by Theorem \ref{thm:n}, $(L_R(E))_0$ is left (right) noetherian. Therefore, by Proposition \ref{prop:idem_exist}, $E$ satisfies Condition (NE).
\end{proof}

\begin{remark}
Note that if $R$ is two-sided noetherian in Corollary \ref{cor:lpa_noeth}, then we get equivalences between the following assertions:
\begin{enumerate}[(a)]
\begin{item}
$E$ satisfies Condition (NE),
\end{item}
\begin{item}
$(L_R(E))_0$ is left noetherian,
\end{item}
\begin{item}
$(L_R(E))_0$ is right noetherian,
\end{item}
\begin{item}
$L_R(E)$ is left noetherian,
\end{item}
\begin{item}
$L_R(E)$ is right noetherian.
\end{item}
\end{enumerate}
\end{remark}
\begin{remark}
Taking $R$ to be a field we obtain a new proof of Abrams, Aranda Pino and Siles Molina's characterization of noetherian Leavitt path algebras of finite graphs with coefficients in a field (cf. \cite{abrams2008locally}).
\end{remark}

\bigskip

Next, we will similarly apply the characterization of artinian epsilon-strongly graded rings. We first show that Condition (NE) will allow us to lift additional algebraic properties from $R$ to $(L_R(E))_0$. This follows as a corollary to a more general structure theorem for the principal component $(L_R(E))_0$. Define a filtration of $(L_R(E))_0$ as follows. For $n \geq 0 $, put,
\begin{equation*}
D_n = \text{Span}_R \{ \alpha \beta^* \mid \text{len}(\alpha) = \text{len}(\beta) \leq n \}.
\end{equation*} It is straightforward to show that $D_n$ is an $R$-subalgebra of $(L_R(E))_0$. For $v \in E^0$ and $n > 0$ let $P(n,v)$ denote the set of paths $\gamma$ with $\text{len}(\gamma)=n$ and $r(\gamma)=v$. Let $\text{Sink}(E)$ denote the set of sinks in $E$. 

\begin{theorem}(\cite[Cor. 2.1.16]{abrams2017leavitt})
For a non-negative integer $n$, let $M_n(R)$ denote the full $n \times n$-matrix ring. Then, for a finite graph $E$, 
\begin{equation*}
D_0  \simeq \prod_{v \in E^0} R ,
\label{eq:u5}
\end{equation*} 
\begin{equation*}
D_n \simeq \prod_{\substack{0 \leq i \leq n-1 \\ v \in \text{Sink}(E)}} M_{| P(i, v) |}(R) \times \prod_{v \in E^0} M_{ | P(n, v) | }(R),
\label{eq:u6}
\end{equation*}
as $R$-algebras.
\label{thm:dn}
\end{theorem}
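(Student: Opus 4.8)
The plan is to exhibit the claimed isomorphisms through systems of matrix units built from the elements $\alpha\beta^*$. First I would record the basic multiplication rule: by induction on length, relation (c) of Definition \ref{def:lpa} together with relation (b) yields $\beta^*\gamma = \delta_{\beta,\gamma}\, r(\beta)$ for any two real paths $\beta,\gamma$ of the same length, and consequently
\begin{equation*}
(\alpha\beta^*)(\gamma\delta^*) = \delta_{\beta,\gamma}\,\alpha\delta^*
\end{equation*}
whenever all four paths have matching lengths. Fixing a length $m$ and a vertex $v \in E^0$, this shows that the finite family $\{\alpha\beta^* \mid \alpha,\beta \in P(m,v)\}$ is a complete set of $|P(m,v)|^2$ matrix units over the central subring $R$, so its $R$-span $B_{m,v}$ is isomorphic to $M_{|P(m,v)|}(R)$; moreover $B_{m,v}$ and $B_{m,v'}$ are orthogonal when $v \ne v'$, since then $\beta^*\gamma = 0$ for $\beta \in P(m,v)$, $\gamma \in P(m,v')$. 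For $n=0$ the paths of length $0$ are exactly the vertices, which by relation (a) are orthogonal idempotents, giving $D_0 = \bigoplus_{v \in E^0} Rv \simeq \prod_{v \in E^0} R$ at once.

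For general $n$ I would argue by induction, the essential input being relation (d), which governs how the filtration levels interact. If $v$ is \emph{not} a sink then $s^{-1}(v)$ is nonempty and (as $E$ is finite) finite, so $v = \sum_{s(f)=v} ff^*$ and hence
\begin{equation*}
\alpha\beta^* = \alpha\Big(\sum_{s(f)=v} ff^*\Big)\beta^* = \sum_{s(f)=v} (\alpha f)(\beta f)^*
\end{equation*}
for $\alpha,\beta \in P(m,v)$, where the paths $\alpha f,\beta f$ have length $m+1$. Thus every non-sink block $B_{m,v}$ with $m < n$ is absorbed into the level-$(m+1)$ matrix units and contributes no independent summand. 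By contrast, if $v$ is a sink then $s^{-1}(v) = \emptyset$, relation (d) imposes nothing, and the block $B_{m,v}$ cannot be refined. This is exactly the dichotomy appearing in the statement: the surviving summands are the sink blocks $B_{i,v}$ for $0 \le i \le n-1$, together with the full top level $\bigoplus_{v \in E^0} B_{n,v} \simeq \prod_{v \in E^0} M_{|P(n,v)|}(R)$.

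Concretely, I would set up the induction so that in passing from $D_{n-1}$ to $D_n$ the top factor $\prod_{v \in E^0} M_{|P(n-1,v)|}(R)$ splits into its sink part, which joins the accumulated sink blocks to extend the range from $i \le n-2$ to $i \le n-1$, and its non-sink part, which is absorbed into $\prod_{v \in E^0} M_{|P(n,v)|}(R)$ via the displayed expansion; the bijection $\mu \leftrightarrow (\mu',f)$ between length-$n$ paths and (length-$(n-1)$ prefix, terminal edge) pairs matches the matrix sizes. \textbf{The main obstacle} is establishing directness, i.e.\ that the retained matrix units are $R$-linearly independent and that the surviving blocks are mutually orthogonal, so that the natural surjection of $D_n$ onto the displayed product of matrix rings is injective. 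I would handle orthogonality by the computation in the first paragraph (distinct ranges kill cross terms, and a sink block at level $i$ is orthogonal to every length-$n$ block since a sink admits no outgoing extension), and handle independence by checking that the only $R$-relations among the $\alpha\beta^*$ are those generated by relation (d); this can be secured by comparing against a known $R$-basis of $(L_R(E))_0$, after which a rank count forces the surjection to be an isomorphism.
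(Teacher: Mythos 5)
A remark on the comparison first: the paper does not prove this statement at all --- it is quoted with a citation to \cite[Cor. 2.1.16]{abrams2017leavitt}, where it is established for field coefficients. So your argument is measured against the standard textbook proof rather than anything in the paper. Your overall architecture --- the matrix units $\alpha\beta^*$ indexed by $P(m,v)$, absorption of non-sink blocks into the next level via relation (d), and orthogonality of the surviving blocks --- is exactly that standard route, and those computations are correct.

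The genuine gap is in your final step, and it matters because the statement here is asserted for an \emph{arbitrary} unital, possibly noncommutative $R$. First, the proposed ``rank count'' cannot force the surjection to be injective: over a general unital ring, free modules of equal finite rank can admit non-injective surjections onto one another (for $R=\mathrm{End}_K(V)$ with $V$ infinite-dimensional one has $R\cong R\oplus R$ as left $R$-modules, so $R$ surjects onto itself with nonzero kernel), so equality of ranks proves nothing; the dimension count that finishes the field case does not transfer. Second, the retained monomials are not literally a subset of the standard monomial $R$-basis of $L_R(E)$: that basis excludes every $\alpha\beta^*$ in which $\alpha$ and $\beta$ end in the same distinguished edge, and sink-block monomials of exactly this form do occur in your generating set, so ``comparing against a known basis'' needs an additional rewriting (triangularity) argument that you have not supplied. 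The good news is that your own orthogonality computations already contain the correct fix: given a relation $\sum r_{\alpha\beta}\,\alpha\beta^*=0$ among the retained monomials, multiplying by $\gamma^*$ on the left and by $\delta$ on the right kills every cross term (same-level cross terms by $\delta_{\beta,\gamma}$, different-level ones because a sink admits no outgoing extension) and leaves $r_{\gamma\delta}\,r(\gamma)=0$. Thus injectivity reduces precisely to vertex faithfulness, $r\neq 0\Rightarrow rv\neq 0$, which holds for every unital $R$ because $L_R(E)\cong R\otimes_{\mathbb{Z}}L_{\mathbb{Z}}(E)$ and $L_{\mathbb{Z}}(E)$ is free as a $\mathbb{Z}$-module on a set of monomials containing the vertices. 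That faithfulness statement is the irreducible external input; once it is invoked, your proof closes.
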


We now show that we can lift even more properties from the base ring $R$ to $(L_R(E))_0$ when Condition (NE) is satisfied.

\begin{corollary}
Let $R$ be a ring and let $E$ be a finite graph that satisfies Condition (NE). Then $(L_R(E))_0$ is Morita equivalent to $R^k$ for some $k > 0$ and, in particular, the following assertions hold:
\begin{enumerate}[(a)]
\begin{item}
$R$ is semisimple if and only if $(L_R(E))_0$ is semisimple.
\end{item}
\begin{item}
$R$ is von Neumann regular if and only if $(L_R(E))_0$ is von Neumann regular.
\end{item}
\begin{item}
$R$ is left (right) noetherian if and only if $(L_R(E))_0$ is left (right) noetherian.
\end{item}
\begin{item}
$R$ is left (right) artinian if and only if $(L_R(E))_0$ is left (right) artinian.
\end{item}
\end{enumerate} 
\label{cor:ne_art}
\end{corollary}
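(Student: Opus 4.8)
The plan is to show that Condition (NE) forces the increasing filtration $D_0 \subseteq D_1 \subseteq D_2 \subseteq \cdots$ of $(L_R(E))_0$ to stabilize, so that $(L_R(E))_0$ coincides with a single $D_k$, which Theorem \ref{thm:dn} identifies with a finite direct product of full matrix rings over $R$. Morita theory then reduces each of the four listed properties to the corresponding property of $R$.

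First I would establish that $(L_R(E))_0 = D_k$ for some $k \geq 0$. By Proposition \ref{prop:fin_gen} there is a non-negative integer $k$ with $(L_R(E))_0 = \bigcup_{m=0}^{k} C_m$. Since $D_k = \mathrm{Span}_R \{ \alpha\beta^* \mid \mathrm{len}(\alpha) = \mathrm{len}(\beta) \leq k \} = \sum_{m=0}^{k} C_m$, we have the sandwich $\bigcup_{m=0}^{k} C_m \subseteq \sum_{m=0}^{k} C_m = D_k \subseteq (L_R(E))_0 = \bigcup_{m=0}^{k} C_m$, whence all three coincide and $(L_R(E))_0 = D_k$. Next, Theorem \ref{thm:dn} gives an $R$-algebra isomorphism of $D_k$ with a finite direct product $\prod_{0 \le i \le k-1,\, v \in \mathrm{Sink}(E)} M_{|P(i,v)|}(R) \times \prod_{v \in E^0} M_{|P(k,v)|}(R)$. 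Discarding the trivial factors $M_0(R) = \{0\}$, I may write this as $\prod_{j=1}^{N} M_{m_j}(R)$ with every $m_j \geq 1$, and with $N \geq 1$ since $(L_R(E))_0 \ne 0$ (e.g.\ the unit $\sum_{v \in E^0} v$ is nonzero and $E^0 \ne \emptyset$).

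Now I would invoke that each $M_{m_j}(R)$ is Morita equivalent to $R$ (via the bimodule $R^{m_j}$, valid for any unital ring $R$), together with the standard fact that Morita equivalence respects finite direct products; hence $(L_R(E))_0 \cong \prod_{j=1}^{N} M_{m_j}(R)$ is Morita equivalent to $\prod_{j=1}^{N} R = R^{N}$. This establishes the headline claim with $k := N > 0$. Finally, for parts (a)--(d) I would appeal to the fact that each of semisimplicity, von Neumann regularity, being left (right) noetherian, and being left (right) artinian is a (one-sided-respecting) Morita invariant: thus $(L_R(E))_0$ enjoys the property if and only if $R^{N}$ does. Combining this with the elementary componentwise observation that a finite direct product $R^{N}$ has any one of these properties precisely when $R$ does, I obtain the four stated equivalences.

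The main obstacle is the stabilization step: recognizing that under Condition (NE) the union $\bigcup_{m} C_m$ collapses onto a single filtration piece $D_k$, so that the a priori infinite-dimensional-looking principal component is in fact a \emph{finite} product of matrix rings over $R$. This is precisely where Proposition \ref{prop:fin_gen} and Theorem \ref{thm:dn} must be combined, and care is needed only to pass correctly between the set-theoretic union of the $C_m$ and the $R$-module sum $D_k$. Everything afterward is a routine application of Morita invariance and of the behaviour of these ring-theoretic finiteness conditions under finite direct products, all of which are standard.
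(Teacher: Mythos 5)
Your proposal is correct and follows essentially the same route as the paper: Proposition \ref{prop:fin_gen} to collapse $(L_R(E))_0$ onto a single filtration piece $D_k$, Theorem \ref{thm:dn} to identify $D_k$ with a finite direct product of matrix rings over $R$, and then Morita equivalence of $M_m(R)$ with $R$ together with Morita invariance and the componentwise behaviour of the four properties under finite products. Your two extra touches of care --- distinguishing the union $\bigcup_m C_m$ from the sum $\sum_m C_m = D_k$, and discarding the trivial factors $M_{|P(i,v)|}(R)$ with $|P(i,v)|=0$ --- are points the paper glosses over, but they do not change the argument.
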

\begin{proof}
Assuming that $E$ is finite and satisfies Condition (NE), Proposition \ref{prop:fin_gen} implies that $(L_R(E))_0=\bigcup_{m=0}^k C_m = D_k$, which is a finite direct product of full matrix rings by Theorem \ref{thm:dn}. That is, $(L_R(E))_0 = M_{n_1}(R) \times M_{n_2}(R) \times \dots \times M_{n_k}(R)$ for some positive integers $n_1, n_2, \dots, n_k$. 

Recall that $R$ and the full matrix ring $M_m(R)$ are Morita equivalent for each $m$ (see for example \cite[18.6]{lam2012lectures}), i.e. there is an equivalence of categories $M_m(R)\textrm{-}\textsf{Mod} \approx R\textrm{-}\textsf{Mod}$. In particular, we have equivalences $M_{n_i}(R)\textrm{-}\textsf{Mod} \approx R\textrm{-}\textsf{Mod}$ for each $1 \leq i \leq k$. Forming the product categories, it is not hard to see that, 
\begin{equation*}
M_{n_1}(R)\textrm{-}\textsf{Mod} \times M_{n_2}(R)\textrm{-}\textsf{Mod} \times \dots \times M_{n_k}(R)\textrm{-}\textsf{Mod} \approx  R\textrm{-}\textsf{Mod} \times R\textrm{-}\textsf{Mod} \times \dots \times R\textrm{-}\textsf{Mod}.
\end{equation*}
For any rings $R,S$ it can be shown that $R\textrm{-}\textsf{Mod} \times S\textrm{-}\textsf{Mod} \approx (R \times S) \textrm{-}\textsf{Mod}$. Hence, 
\begin{equation*}
(M_{n_1}(R) \times \dots \times M_{n_k}(R))\textrm{-}\textsf{Mod} \approx R^k \textrm{-}\textsf{Mod},
\end{equation*}
and thus $(L_R(E))_0$ and $R^k$ are Morita equivalent. 

Furthermore, it is well-known that $R^k$ is semisimple/von Neumann regular/one-sided artinian/one-sided noetherian if and only if $R$ is semisimple/von Neumann regular/one-sided artinian/one-sided noetherian. Moreover, these properties are Morita invariant, which is enough to establish the corollary. 
\end{proof}

\smallskip

We will begin by showing a partial result concerning the characterization of artinian Leavitt path algebras. 

\begin{lemma}
Let $E$ be a finite graph. Then $E$ is acyclic if $E$ satisfies Condition (NE) and contains no infinite paths.
\label{lem:acyclic_noexit}
\end{lemma}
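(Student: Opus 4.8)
The plan is to argue by contradiction, turning a hypothetical cycle into an infinite path. So I would assume that $E$ is a finite graph satisfying Condition (NE) with no infinite path, and suppose toward a contradiction that $E$ is \emph{not} acyclic. Then $E$ contains a cycle $c = e_1 e_2 \cdots e_n$, which by the definition of a cycle satisfies $s(e_1) = r(e_n)$ (and $s(e_i) \neq s(e_1)$ for $2 \le i \le n$).

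The key step is to manufacture an infinite path from this single cycle. Since $s(e_1) = r(e_n)$, the final edge of one copy of $c$ composes with the first edge of the next copy; combined with the path condition $s(e_{i+1}) = r(e_i)$ holding \emph{inside} $c$, this means that for every $k \ge 1$ the concatenation $c^k = e_1 \cdots e_n e_1 \cdots e_n \cdots e_1 \cdots e_n$ ($k$ copies) is a legitimate path in $E$. Passing to the limit, the infinite sequence obtained by traversing $c$ repeatedly is a sequence of edges $f_1 f_2 f_3 \cdots$ with $s(f_{i+1}) = r(f_i)$ at every index, i.e.\ an infinite path in $E$. This directly contradicts the hypothesis that $E$ contains no infinite path, and the contradiction forces $E$ to be acyclic.

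I expect this to involve essentially no genuine obstacle; the only points requiring care are definitional. First, one must phrase the construction against whatever notion of \emph{infinite path} the paper adopts (a sequence $f_1 f_2 \cdots$ with $r(f_i) = s(f_{i+1})$) and verify that the cycle's closing condition $s(e_1) = r(e_n)$ is exactly what makes the repeated concatenation composable — this is the crux, since it is what guarantees that even a periodic repetition counts as an infinite path. Condition (NE) is carried along as the standing hypothesis of this part of the paper and is not strictly needed for the argument above, since one cycle already produces an infinite path. If one prefers a route that uses (NE) explicitly, an alternative is to invoke Lemma \ref{lem:fin_ends}: a cycle yields paths of arbitrary length, and once a path exceeds $|E^0|$ it must end in a cycle whose absence of exits (from (NE)) forces the traversal to continue around that cycle indefinitely, again producing an infinite path. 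Either way, the essential content is that the existence of a cycle in a finite graph is incompatible with the no-infinite-path hypothesis.
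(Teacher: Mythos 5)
Your proposal is correct and takes essentially the same approach as the paper: the paper's one-line proof reduces the lemma to the equivalence ``$E$ contains infinite paths if and only if $E$ contains a cycle,'' and the direction actually needed (a cycle yields an infinite path by repeated traversal) is precisely what you spell out. Your observation that Condition (NE) is not strictly needed for this implication is also accurate --- the paper invokes it only to get the converse direction via Lemma \ref{lem:fin_ends}, which the lemma as stated never uses.
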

\begin{proof}
Assume that $E$ satisfies Condition (NE). Then any infinite path must end in a cycle, hence $E$ contains infinite paths if and only if $E$ contains a cycle. 
\end{proof}

The following is our first result concerning artinian Leavitt path algebras. 

\begin{proposition}
Let $E$ be a finite graph and let $R$ be a left (right) artinian ring. Let $L_R(E)$ be the Leavitt path algebra graded by the canonical $\mathbb{Z}$-grading. Then the following assertions are equivalent:
\begin{enumerate}[(a)]
\begin{item}
$E$ is acyclic, 
\end{item} 
\begin{item}
$(L_R(E))_0$ is left (right) artinian and $\text{Supp}(L_R(E))$ is finite,
\end{item}
\begin{item}
$L_R(E)$ is left (right) artinian.
\end{item}
\end{enumerate}
\label{prop:lpa_left}
\end{proposition}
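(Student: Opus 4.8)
The plan is to prove Proposition \ref{prop:lpa_left} by establishing the cycle of implications $(a) \implies (b) \implies (c) \implies (a)$, leveraging the general theory already developed for epsilon-strongly graded rings together with the structural results for $(L_R(E))_0$.

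For $(a) \implies (b)$: Assume $E$ is acyclic. Then in particular $E$ satisfies Condition (NE) (a graph with no cycles trivially has no cycle with an exit), so Corollary \ref{cor:ne_art}(d) immediately gives that $(L_R(E))_0$ is left (right) artinian, using the hypothesis that $R$ is left (right) artinian. It remains to show $\text{Supp}(L_R(E))$ is finite. Since $E$ is finite and acyclic, there is a uniform bound $N = |E^0|$ on the length of any path (an acyclic finite graph has no arbitrarily long paths, since a path longer than the number of vertices would repeat a vertex and thus contain a cycle). Consequently, for the homogeneous component in degree $n$ described in (\ref{eq:2}), every generator $\alpha_i \beta_i^*$ has $\text{len}(\alpha_i), \text{len}(\beta_i) \leq N$, forcing $|n| = |\text{len}(\alpha_i) - \text{len}(\beta_i)| \leq N$. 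Hence $(L_R(E))_n = \{0\}$ whenever $|n| > N$, so the support is finite.

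For $(b) \implies (c)$: This is precisely where the main machinery enters. Recall from \cite[Thm. 3.3]{nystedt2017epsilon} that since $E$ is finite, the canonical $\mathbb{Z}$-grading on $L_R(E)$ is epsilon-strong, and $\mathbb{Z}$ is torsion-free. Given that the principal component $(L_R(E))_0$ is left (right) artinian and the support is finite, Theorem \ref{thm:a} applies directly to yield that $L_R(E)$ is left (right) artinian. The implication $(c) \implies (a)$ goes by contraposition: if $E$ is not acyclic, then $E$ contains a cycle. I would split into two cases. If the cycle has an exit, then $E$ fails Condition (NE), so Proposition \ref{prop:idem_exist} furnishes infinitely many pairwise orthogonal idempotents in $(L_R(E))_0$; since a left (right) artinian ring cannot contain an infinite set of orthogonal idempotents (and artinianity of $L_R(E)$ descends to $(L_R(E))_0$ by Proposition \ref{prop:artinian}), this contradicts $L_R(E)$ being artinian. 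If instead the cycle has no exit, then $E$ contains an infinite path (traversing the cycle repeatedly), which produces an infinite strictly descending chain of ideals or, more cleanly, again an infinite family of orthogonal idempotents of the form $\{\gamma^n (\gamma^*)^n\}$ witnessing failure of the descending chain condition; alternatively, one observes that a cycle forces $\text{Supp}(L_R(E))$ to be infinite (powers $\gamma^n$ live in degree $n \cdot \text{len}(\gamma)$), and then Theorem \ref{thm:a} rules out artinianity since the support-finiteness condition fails.

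The step I expect to be the main obstacle is the case analysis in $(c) \implies (a)$, specifically handling a cycle \emph{without} an exit. Proposition \ref{prop:idem_exist} only provides orthogonal idempotents when Condition (NE) \emph{fails}, i.e. when there is a cycle \emph{with} an exit, so it does not directly cover the no-exit case. The cleanest route is to argue via the support: a cycle (with or without exit) yields nonzero homogeneous elements $\gamma^n$ in every degree $n \cdot \text{len}(\gamma)$ for $n \geq 1$, so $\text{Supp}(L_R(E))$ is infinite, and then Theorem \ref{thm:a} (whose hypotheses are met since $\mathbb{Z}$ is torsion-free and the grading is epsilon-strong) shows $L_R(E)$ cannot be artinian. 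This unifies both cases and sidesteps the limitation of Proposition \ref{prop:idem_exist}; I would therefore prefer the support-infiniteness argument for $(c) \implies (a)$ over the orthogonal-idempotents argument.
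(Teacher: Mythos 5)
Your proof is correct, and for the implication $(c) \implies (a)$ it takes a genuinely different --- and in fact shorter --- route than the paper. Your $(a) \implies (b)$ and $(b) \implies (c)$ coincide with the paper's proof (Condition (NE) plus Corollary \ref{cor:ne_art} for artinianity of the principal component, the bound on path lengths for finiteness of the support, and Theorem \ref{thm:a} for lifting artinianity to $L_R(E)$). For $(c) \implies (a)$, however, the paper proceeds differently: from Theorem \ref{thm:a} it deduces that $(L_R(E))_0$ is artinian, hence noetherian, so that $E$ satisfies Condition (NE) by Corollary \ref{cor:lpa_noeth} (which rests on Proposition \ref{prop:idem_exist}); it then deduces from finiteness of the support that $E$ has no infinite paths, and concludes via Lemma \ref{lem:acyclic_noexit}. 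Your support-infiniteness argument short-circuits all of this: a cycle $\gamma$ yields nonzero homogeneous elements $\gamma^n \in (L_R(E))_{n \cdot \text{len}(\gamma)}$ (nonzero since $(\gamma^*)^n \gamma^n = r(\gamma)$), so $\text{Supp}(L_R(E))$ is infinite, contradicting Theorem \ref{thm:a} directly. This avoids Hopkins--Levitzki, Corollary \ref{cor:lpa_noeth}, Proposition \ref{prop:idem_exist} and Lemma \ref{lem:acyclic_noexit} entirely, and it exposes that the paper's detour through Condition (NE) is logically redundant, since a cycle by itself already produces paths of unbounded length; what each approach ``buys'' is therefore mostly on your side (economy), while the paper's route makes the link to Condition (NE) explicit. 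Note that both arguments rest on the same implicit standard fact that vertices, hence paths, are nonzero in $L_R(E)$ over a unital ring. One caveat: your parenthetical fallback for the no-exit case --- that $\{\gamma^n (\gamma^*)^n\}$ would be infinitely many orthogonal idempotents --- is false, since for a cycle without an exit relation (d) of Definition \ref{def:lpa} gives $\gamma \gamma^* = s(\gamma)$, so all of these elements collapse to the single vertex $s(\gamma)$; but you correctly flagged this and discarded it in favor of the support argument, so your final proof stands.
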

\begin{proof}
$(a) \implies (b):$ Assume that $E$ is acyclic. Then, in particular, $E$ trivially satisfies Condition (NE), so by Corollary \ref{cor:ne_art}, $(L_R(E))_0$ is left (right) artinian. 
We claim that there exists some $n$ satisfying $(L_R(E))_k = \{ 0 \}$ for all $k \in \mathbb{Z}$ such that $|k| > n$. 
Since $E$ is finite and acyclic there is a maximal length of paths in $E$. By (\ref{eq:2}), a monomial $\alpha \beta^* \in (L_R(E))_k$ if and only if $\text{len}(\alpha) - \text{len}(\beta)=k$. Taking $n$ larger than the maximal path length in $E$, it follows that $(L_R(E))_k = \{ 0 \}$ for all $k$ such that $|k| > n$, thus proving the claim.

$(b) \implies (c):$ Since the canonical $\mathbb{Z}$-grading on $L_R(E)$ is epsilon-strong (see \cite[Thm. 3.3]{nystedt2017epsilon}), Theorem \ref{thm:a} implies that $L_R(E)$ is left (right) artinian.

$(c) \implies (a):$ Assume that $L_R(E)$ is left (right) artinian. Then Theorem \ref{thm:a} implies that (i) $(L_R(E))_0$ is left (right) artinian and (ii) there is $n$ such that $(L_R(E))_k= \{ 0 \}$ for all $k \in \mathbb{Z}$ such that $k > n$. In particular, (i) implies that $(L_R(E))_0$ is left (right) noetherian, so $E$ satisfies Condition (NE) by Corollary \ref{cor:lpa_noeth}. We claim that (ii) implies that $E$ does not contain any infinite paths. Assuming that the claim hold, Lemma \ref{lem:acyclic_noexit} proves that $E$ is acyclic. 
Suppose to get a contradiction that $E$ contains an infinite path. Then, we can construct finite paths of arbitrary length by taking initial subpaths of the infinite path. Hence, $(L_R(E))_k \ne \{ 0 \}$ for arbitrarily large $k$, contradicting (ii). 
\end{proof}
\begin{remark}
By making the stronger assumption that $R$ is semisimple we see by Corollary \ref{cor:ne_art} that condition $(b)$ can be replaced by,
\begin{enumerate}[(b')]
\begin{item}
$(L_R(E))_0$ is semisimple and $\text{Supp}(L_R(E))$ finite.
\end{item}
\end{enumerate}
\end{remark}

To get our full characterization we will apply a theorem by Nystedt, Öinert and Pinedo, which will allow us to lift semisimplicity from the principal component to the whole ring. Let $S=\bigoplus_{i \in \mathbb{Z}} S_i$ be an arbitrary epsilon-strongly $\mathbb{Z}$-graded ring and let $Z(S_0)_{\text{fin}}$ denote the elements $r \in Z(S_0)$ such that $r \epsilon_i = 0$ for all but finitely many integers $i$. There is a way to define a trace function $\text{tr}_\gamma \colon Z(S_0)_{\text{fin}} \to Z(S_0)$  (see \cite[Def. 14]{nystedt2016epsilon}) such that the following theorem holds.

\begin{theorem}(\cite[Thm. 23]{nystedt2016epsilon})
Let $S$ be an epsilon-strongly $\mathbb{Z}$-graded ring. Assume that $\epsilon_i = 0$ for all but finitely many integers $i$ and that $\text{tr}_\gamma(1)$ is invertible in $S_0$. If $S_0$ is semisimple, then $S$ is semisimple.
\label{thm:semisimple}  
\end{theorem}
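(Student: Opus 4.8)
The plan is to prove semisimplicity by a Maschke-type averaging argument, with the trace function $\operatorname{tr}_\gamma$ playing the role of the normalized sum over a finite group. First I would record the structural consequences of the hypotheses. Since the grading is symmetric, $\epsilon_i = 0$ forces $S_i S_{-i} = \{0\}$ and hence $S_i = S_{-i} = \{0\}$; thus the assumption that $\epsilon_i = 0$ for all but finitely many $i$ is equivalent to $\operatorname{Supp}(S)$ being finite, and by Proposition \ref{prop:2}, $S$ is then finitely generated as a left $S_0$-module. To prove that $S$ is semisimple it is enough to show that every short exact sequence of left $S$-modules splits, equivalently that every $S$-submodule of an $S$-module is a direct summand.

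Next I would set up the averaging. Let $N$ be a left $S$-submodule of a left $S$-module $M$. Since $S_0 = R$ is semisimple, there is an $S_0$-linear projection $\pi \colon M \to N$ with $\pi|_N = \operatorname{id}_N$. For each $i$ in the (finite) support, the relation $S_i S_{-i} S_i = S_i$ together with unitality of $S_i S_{-i}$ yields a finite expression $\epsilon_i = \sum_k u_{i,k} v_{-i,k}$ with $u_{i,k} \in S_i$ and $v_{-i,k} \in S_{-i}$. Using these I would define
\[
\widetilde{\pi}(m) = \operatorname{tr}_\gamma(1)^{-1} \sum_i \sum_k u_{i,k}\, \pi\bigl(v_{-i,k}\, m\bigr),
\]
the sum being finite because the support is finite. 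The factor $\operatorname{tr}_\gamma(1)^{-1} \in S_0$, available by hypothesis, plays the role of $|G|^{-1}$ in the classical Maschke theorem.

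The main work, and the chief obstacle, is to verify that $\widetilde{\pi}$ is left $S$-linear and restricts to the identity on $N$. In the classical setting one conjugates by invertible homogeneous elements, but in the epsilon-strong (partial) situation the components $S_i$ need not contain invertible elements, so the argument must instead exploit that the family $\{(u_{i,k}, v_{-i,k})\}$ behaves like a partial separability system for $S$ over $S_0$. Concretely, one needs the centrality and idempotency of the $\epsilon_i$ (see \cite[Prop. 5]{nystedt2016epsilon}), the fact that left multiplication by a homogeneous $s \in S_j$ carries $S_i$ into $S_{j+i}$, and the defining properties of $\operatorname{tr}_\gamma$ from \cite[Def. 14]{nystedt2016epsilon}, to combine so that passing a homogeneous $s$ through $\widetilde{\pi}$ merely reindexes the double sum. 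Pinning down that $\operatorname{tr}_\gamma(1)^{-1}$ is exactly the normalization making $\widetilde{\pi}(sm) = s\,\widetilde{\pi}(m)$ and $\widetilde{\pi}|_N = \operatorname{id}_N$ hold is the delicate computational heart of the proof. Once $\widetilde{\pi}$ is produced, $N$ is a direct summand of $M$, so every left $S$-module is semisimple and hence $S$ is semisimple.
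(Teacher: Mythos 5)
First, a point of reference: this paper never proves Theorem \ref{thm:semisimple} at all --- it is imported verbatim from \cite[Thm.~23]{nystedt2016epsilon} and used as a black box in Corollary \ref{cor:full_semi}. The only proof to compare with is therefore the one in that reference, where invertibility of $\text{tr}_\gamma(1)$ is used to show that $S$ is a \emph{separable} extension of $S_0$, and semisimplicity then follows from the classical fact that a separable extension of a semisimple ring is semisimple; the proof of that classical fact is exactly the Maschke-type averaging you outline. So your plan is, in substance, the same argument, carried out directly on modules rather than factored through the notion of separability, and it can indeed be made to work.

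However, as written your proposal stops precisely where the content of the theorem begins: you declare the $S$-linearity of $\widetilde{\pi}$ to be ``the delicate computational heart'' and do not carry it out, and one issue that your normalization silently relies on is never addressed. Two concrete facts are needed. (i) The element $c = \sum_{i}\sum_{k} u_{i,k} \otimes v_{-i,k} \in S \otimes_{S_0} S$ is $S$-central: for homogeneous $s \in S_j$, write $s u_{i,k} = \epsilon_{j+i}\, s u_{i,k} = \sum_l u_{j+i,l}\,(v_{-(j+i),l}\, s\, u_{i,k})$, slide the degree-zero factor $v_{-(j+i),l}\, s\, u_{i,k} \in S_0$ across the tensor sign so that $\sum_k u_{i,k}v_{-i,k} = \epsilon_i$ reappears on the right, use $(v_{-(j+i),l}\, s)\,\epsilon_i = v_{-(j+i),l}\, s$ (valid because $v_{-(j+i),l}\, s \in S_{-i}$), and finally reindex $i \mapsto j+i$; this reindexing is exactly where one uses that the sum runs over all of $\mathbb{Z}$ and has finite support. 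Applying to the identity $sc = cs$ the map $S \otimes_{S_0} S \to M$, $x \otimes y \mapsto x\,\pi(ym)$ (well defined because $\pi$ is $S_0$-linear) gives $\Phi(sm) = s\,\Phi(m)$ for the unnormalized average $\Phi(m) = \sum_{i,k} u_{i,k}\,\pi(v_{-i,k}m)$. (ii) Your prefactor must commute with all of $S$, not just with $S_0$: a priori $\text{tr}_\gamma(1)^{-1}$ lies only in $Z(S_0)$, and $\widetilde{\pi} = \text{tr}_\gamma(1)^{-1}\Phi$ would not be $S$-linear without more. This also follows from (i): the multiplication map $\mu \colon S \otimes_{S_0} S \to S$ is an $(S,S)$-bimodule map and $\mu(c) = \sum_i \epsilon_i = \text{tr}_\gamma(1)$, so centrality of $c$ gives $s\,\text{tr}_\gamma(1) = \mu(sc) = \mu(cs) = \text{tr}_\gamma(1)\,s$, whence $\text{tr}_\gamma(1) \in Z(S)$ and so is its inverse. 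With (i) and (ii) in hand, $\widetilde{\pi}$ is an $S$-linear projection of $M$ onto $N$ --- note that $\widetilde{\pi}|_N = \text{id}_N$ is immediate from $\mu(c)=\text{tr}_\gamma(1)$, not delicate --- and your conclusion follows. (A small remark: the finite generation of $S$ over $S_0$ via Proposition \ref{prop:2} is never needed in this argument.)
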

\begin{remark}
Note that Nystedt, Öinert and Pinedo show Theorem \ref{thm:semisimple} for an epsilon-strongly $G$-graded ring, where $G$ is an arbitrary group.
\end{remark}
The theorem only gives sufficient conditions: $\text{tr}_\gamma(1)$ is not necessarily invertible in $S_0$ if $S$ is semisimple. We will need the following to be able to apply the theorem.

\begin{lemma}
Let $E$ be a finite graph and $R$ be a ring such that $n \cdot 1_R$ is invertible for every integer $n \ne 0$. If $\text{Supp}(L_R(E))$ is finite, then $\text{tr}_\gamma(\sum_{ v \in E^0 } v)$ is invertible in $(L_R(E))_0$.\label{lem:invertible}
\end{lemma}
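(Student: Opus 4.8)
The plan is to unwind the definition of the trace so as to rewrite $\text{tr}_\gamma(\sum_{v\in E^0} v)$ as an explicit finite sum of epsilon-units, and then to use the matrix description of the principal component $(L_R(E))_0$ to recognise this sum as a block-diagonal scalar matrix with positive integer entries, which is invertible by the hypothesis on $R$.

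First I would observe that $\sum_{v\in E^0} v$ is just the multiplicative identity $1$ of $L_R(E)$, and that it lies in $Z((L_R(E))_0)_{\text{fin}}$ precisely because $\text{Supp}(L_R(E))$ is finite (so $1\cdot\epsilon_i=\epsilon_i=0$ for all but finitely many $i$). Evaluating the trace of \cite[Def. 14]{nystedt2016epsilon} at $1$ then gives $\text{tr}_\gamma(1)=\sum_{i\in\mathbb{Z}}\epsilon_i$, where $\epsilon_i$ is the epsilon-unit of $(L_R(E))_i(L_R(E))_{-i}$, a finite sum. Moreover, a cycle in $E$ would create real paths of unbounded length and hence nonzero components $(L_R(E))_i$ for arbitrarily large $i$ by \eqref{eq:2}, contradicting the finiteness of the support; thus $E$ is acyclic with some maximal path length $N$, and the sum reduces to $\sum_{i=-N}^{N}\epsilon_i$.

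Next I would bring in the structure theorem for the principal component. Since $E$ is finite and acyclic we have $(L_R(E))_0=D_N$, so Theorem \ref{thm:dn} supplies an $R$-algebra isomorphism $(L_R(E))_0\cong\prod_{j}M_{n_j}(R)$ for finitely many positive integers $n_j$. By \cite[Prop. 5]{nystedt2016epsilon} each $\epsilon_i$ is a \emph{central} idempotent of $(L_R(E))_0$. The claim I would establish is that, because each $\epsilon_i$ is a combination of graph monomials $\alpha\beta^*$ with coefficients in $\{0,1_R\}$, its image in each factor $M_{n_j}(R)$ is a central idempotent defined over the prime subring, hence either $0$ or the identity matrix $I_{n_j}$. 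Granting this, one obtains $\text{tr}_\gamma(1)=\sum_{j}m_j\,I_{n_j}$, where $m_j$ counts the indices $i$ for which $\epsilon_i$ restricts to the identity on the $j$-th block.

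Finally, since $\epsilon_0=\sum_{v\in E^0}v=1$ restricts to the identity on every block, each $m_j\ge 1$; in particular $m_j$ is a nonzero integer, so $m_j 1_R$ is invertible by hypothesis and each block component $m_j I_{n_j}$ is invertible in $M_{n_j}(R)$. Hence $\text{tr}_\gamma(\sum_{v}v)$ is invertible in $(L_R(E))_0$, as required. The step I expect to be the main obstacle is the one in the previous paragraph: while $\epsilon_i=\sum_{\text{len}(\mu)=i}\mu\mu^*$ for $i\ge 0$ is transparently a $\{0,1_R\}$-combination of monomials, the negative-degree units $\epsilon_{-i}$ are not simply the images of the positive ones under the involution, and one must control them carefully enough to see that each $\epsilon_i$ restricts to $0$ or a full identity block — equivalently, that $\text{tr}_\gamma(1)$ is genuinely scalar on each block rather than a more general central idempotent combination whose invertibility would not follow merely from $n\cdot 1_R$ being invertible.
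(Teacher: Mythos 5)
Your route is genuinely different from the paper's. The paper never invokes the structure theorem: it uses \cite[Thm.~3.3]{nystedt2017epsilon} to write $\text{tr}_\gamma(\epsilon_0)=\sum_{i=1}^{|E^0|}n_i v_i+\sum_{i=1}^{c'}n_i'\alpha_i\alpha_i^*$ with positive integer coefficients, makes an Ansatz for a right inverse of the same shape, and solves the resulting linear system for the unknown coefficients by exploiting the partial order ``initial subpath'' on the paths $\alpha_i$: the system is triangular along the chains of this poset, so one first sets $m_i=1/n_i$ at the vertices and then solves successively along each chain; centrality of the trace then upgrades the explicit right inverse to a two-sided inverse. Your argument replaces this computation with structure theory: finite support forces $E$ to be acyclic (correct, since a cycle yields nonzero components in arbitrarily high degrees), hence $(L_R(E))_0=D_N\cong\prod_j M_{n_j}(R)$ by Theorem \ref{thm:dn}, and the problem reduces to showing that each central idempotent $\epsilon_i$ restricts to $0$ or $I_{n_j}$ on each block, so that $\text{tr}_\gamma(1)=\sum_j m_j I_{n_j}$ with each $m_j\geq 1$ an integer, which is invertible by the hypothesis on $R$.

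The obstacle you flag is real but closes with exactly the citation the paper's own proof rests on: \cite[Thm.~3.3]{nystedt2017epsilon} gives, for \emph{every} $i\in\mathbb{Z}$ (negative degrees included), that $\epsilon_i$ is a coefficient-$1_R$ sum of vertices and monomials $\alpha\alpha^*$. Under the isomorphism of Theorem \ref{thm:dn}, vertices and such monomials expand (via relation (d) of Definition \ref{def:lpa}) into sums of matrix units with nonnegative integer coefficients, so the image of $\epsilon_i$ in each block $M_{n_j}(R)$ is an integer matrix; being central it equals $m\cdot I_{n_j}$ for some integer $m$, and idempotency gives $m(m-1)1_R=0$, whence $m\in\{0,1\}$ because $R$ has characteristic zero by your hypothesis. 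With that step supplied, your proof is complete and, to my mind, more conceptual: it explains the invertibility (the trace is a positive-integer scalar on each block) and shows precisely where the hypothesis on $R$ enters, at the cost of the acyclicity detour and reliance on Theorem \ref{thm:dn}; the paper's proof is longer and purely computational but stays at the level of generators and relations and produces the inverse explicitly.
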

\begin{proof}
Assume that $\text{Supp}(L_R(E))$ is finite.  
Note that the multiplicative identity of $(L_R(E))_0$ is $\epsilon_0 = \sum_{v \in E^0} v$ where the sum is well-defined since $E$ is a finite graph. By the definition of the trace (\cite[Def. 14]{nystedt2016epsilon}), $\text{tr}_\gamma(\epsilon_0) = \sum_{i \in \mathbb{Z}} \epsilon_i$. Indeed, this expression is valid since $\epsilon_i = 0$ for all but finitely many integers $i$ by the assumption that the support is finite.

By \cite[Thm. 3.3]{nystedt2017epsilon}, every non-zero $\epsilon_i$ can be written as a finite sum,
\begin{equation*}
\epsilon_i = \sum_i v_i + \sum_i \alpha_i \alpha_i^*,
\end{equation*} 
where $\alpha_i$ are paths. Let $v_1, \dots, v_{| E^0 |}$ be the vertices of $E$. Since $\epsilon_0$ is a term in the trace, there are two sequences of positive integer $(n_i)_{i=1}^{| E^0 |}, (n_i')_{i=1}^{c'}$ and a sequence of paths $(\alpha_i)_{i=1}^{c'}$ such that,
\begin{equation*}
\text{tr}_\gamma(\epsilon_0) = \sum_{i=1}^{| E^0 |} n_i v_i + \sum_{i=1}^{c'} n_i' \alpha_i \alpha_i^*.
\end{equation*}
We will show that the right-hand side is invertible in $(L_R(E))_0$. Recall that $v_i v_j = \delta_{i,j}v_i$ and $v_i \alpha_j = \delta_{v_i, s(\alpha_j)} \alpha_j$ for all indices $i,j$. For $1 \leq i,j \leq c'$, write $\alpha_i \leq \alpha_j$ if $\alpha_i$ is an initial subpath of $\alpha_j$. A moment's thought yields that $(\alpha_i \alpha_i^*)(\alpha_j \alpha_j^*)=(\alpha_j \alpha_j^*)(\alpha_i \alpha_i^*) = \alpha_j \alpha_j^*$ if $\alpha_i \leq \alpha_j$ and $0$ otherwise. Let $T$ be the set of triples $(i,j,t)$ of indices such that $\alpha_i \leq \alpha_j$ or $\alpha_j \leq \alpha_i$ and let $t$ be the index of the longer path. Finally, let $f \colon \{ 1, 2, \dots, c' \} \to \{ 1, 2, \dots, |E^0| \}$ be the function satisfying $s(\alpha_i)=v_{f(i)}$ for all $1 \leq i \leq c'$. In other words, $f$ maps the path index $i$ to the index of the start vertex $s(\alpha_i)$. 

Now, making an Ansatz for a right inverse and calculating gives,
\begin{align*}
\Big ( \sum_{i=1}^{| E^0 |} n_i v_i + \sum_{i=1}^{c'} n_i' \alpha_i \alpha_i^* \Big ) \Big ( \sum_{i=1}^{| E^0 |} m_i v_i + \sum_i^{c'} m_i' \alpha_i \alpha_i^* \Big ) \\ 
= \sum_{i=1}^{| E^0 |} n_i m_i v_i + \sum_{i=1}^{c'} (n_{f(i)} m_i' + n_i' m_{f(i)}) \alpha_i \alpha_i^* + \sum_{ (i,j,t) \in T } n_i' m_j' \alpha_t \alpha_t^*. 
\end{align*}
Letting $m_i = 1/n_i$ for all $i$ we see that the first sum equals $\epsilon_0 = \sum_{i=1}^{| E^0 |} v_i$. Next, we will solve for $m_i'$ with the aim to kill the second and third sums. Consider the coefficient of $\alpha_{t} \alpha_{t}^*$ for some index $t$. We require that,
\begin{align*}
0 &= n_{f(t)}m_{t}' + n_{t}' m_{f(t)} + \sum_{\substack{ (i,j,t) \in T }} n_i' m_j'   \\
 &= n_{f(t)}m_{t}' + n_{t}' m_{f(t)} + \Big ( \sum_{ \substack{ (t,j,t) \in T \\ j \ne t }} n_i' m_j' + \sum_{  \substack{ (i,t,t) \in T } } n_i' m_{t}' \Big ) \\
 &= \Big ( n_{f(t)} + \sum_{ \substack{ (i,t,t) \in T }} n_i' \Big ) m_{t}' + n_{t}' m_{f(t)} + \sum_{ \substack{ (t,j,t) \in T \\ j \ne t }} n_i' m_j'.
\end{align*}

Rearranging, this means we need to solve for $(m_i')_{i=1}^{c'}$ in the following set of simultaneous equations,
\begin{equation}
m_{t}' = - \Big ( n_{f(t)} + \sum_{(i,t,t) \in T} n_i' \Big )^{-1} \Big ( \frac{n_{t}'}{n_{f(t)}} + \sum_{ \substack{(t,j,t) \in T \\ j \ne t }} n_{t}' m_j' \Big ),
\label{eq:bm4}
\end{equation}
where $1 \leq t \leq c'$. 

We will find a solution to this system of equations by solving a set of independent subsystems. Consider the chain decomposition of the poset $A = \{ \alpha_1, \alpha_2, \dots, \alpha_{c'} \}$ equipped with the ordering defined above. More precisely, let $S$ be the minimal elements of $A$ and for each $\alpha_k \in S$, let $P_k : \alpha_k \leq \alpha_{k_1} \leq \dots \leq \alpha_{k_n}$ be the longest chain starting at $\alpha_k$. Note that the family of chains $(P_k)_{\alpha_k \in S}$ is a partition of $A$.

Now, consider a fixed chain $P_{k_0} : \alpha_{k_0} \leq \alpha_{k_1} \leq \dots \leq \alpha_{k_n}$ and put $I = \{ k_0, k_1, \dots, k_n \}$. We claim that the set of equations (\ref{eq:bm4}) where $t \in I$ is an independent subsystem. Indeed, a moment's thought yields that this is a system in the variables $\{ m_t' \mid t \in I \}$ and conversely these variables only appear in the equations for which $t \in I$. More precisely, since $\alpha_{k_0}$ is minimal, there is no triple $(k_0,j,k_0) \in T$ such that $j \ne k_0$. Hence, taking $t=k_0$ in (\ref{eq:bm4}), the second sum is empty and we can solve directly for $m_{k_0}'$. Moreover, for $0 \leq i \leq n$ note that $(k_i, j, k_i) \in T$ if and only if $j \in \{ k_0, k_1, \dots, k_i \}$. Hence, taking $t=k_i$ in (\ref{eq:bm4}), we can solve for $m_{k_i}'$ in terms of $m_{k_0}', m_{k_1}', \dots, m_{k_{i-1}}'.$ Going along the chain and successively solving the equations, we obtain a solution to the subsystem (\ref{eq:bm4}) for the indices in $P_{k_0}$. Repeating this process for each chain, we obtain a solution to the whole system. 

Thus, we obtain a right inverse of $\text{tr}_\gamma(\epsilon_0)$. Since $\text{tr}_\gamma(\epsilon_0) \in Z((L_R(E))_0)$, this is enough to establish the lemma.

\end{proof}

%

The following is one of our main result:

\begin{corollary}
Let $E$ be a finite graph and let $R$ be a semisimple ring such that $n \cdot 1_R$ is invertible for every integer $n \ne 0$. Let $L_R(E)$ be the corresponding Leavitt path algebra graded by the canonical $\mathbb{Z}$-grading. Then the following assertions are equivalent:
\begin{enumerate}[(a)]
\begin{item}
$E$ is acyclic,
\end{item}
\begin{item}
$(L_R(E))_0$ is semisimple and $\text{Supp}(L_R(E))$ is finite,
\end{item}
\begin{item}
$L_R(E)$ is left artinian, 
\end{item}
\begin{item}
$L_R(E)$ is right artinian, 
\end{item}
\begin{item}
$L_R(E)$ is semisimple, 
\end{item}
\end{enumerate}
\label{cor:full_semi}
\end{corollary}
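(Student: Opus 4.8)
The plan is to derive this five-way equivalence by combining Proposition \ref{prop:lpa_left}, applied in both its left and right versions (which is legitimate since a semisimple ring $R$ is in particular both left and right artinian), with the semisimplicity-lifting result of Theorem \ref{thm:semisimple}. Since the only genuinely new assertion compared to Proposition \ref{prop:lpa_left} is (e), the core of the argument is the single implication producing semisimplicity of the whole algebra; everything else is assembled from results already at hand.

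First I would dispatch the equivalences among (a), (b), (c), (d). As $R$ is semisimple it is left artinian, so the left version of Proposition \ref{prop:lpa_left} yields $(a) \Leftrightarrow (c)$, and applying the right version yields $(a) \Leftrightarrow (d)$. For $(a) \Leftrightarrow (b)$ I would invoke the Remark following Proposition \ref{prop:lpa_left}: under acyclicity, which implies Condition (NE), Corollary \ref{cor:ne_art} allows one to replace ``$(L_R(E))_0$ left (right) artinian'' by ``$(L_R(E))_0$ semisimple'', precisely because $R$ is semisimple. This gives $(a) \Leftrightarrow (b)$, so (a)--(d) are mutually equivalent.

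The key step is $(b) \Rightarrow (e)$, for which I would verify the hypotheses of Theorem \ref{thm:semisimple} applied to $S = L_R(E)$ equipped with its canonical epsilon-strong $\mathbb{Z}$-grading. Assertion (b) supplies both the finiteness of $\text{Supp}(L_R(E))$---equivalently, $\epsilon_i = 0$ for all but finitely many $i$, since $x = \epsilon_i x$ forces $S_i = \{0\}$ whenever $\epsilon_i = 0$---and the semisimplicity of $S_0 = (L_R(E))_0$. The remaining hypothesis, invertibility of $\text{tr}_\gamma(1)$ in $S_0$, is exactly the conclusion of Lemma \ref{lem:invertible}: the multiplicative identity of $(L_R(E))_0$ is $\epsilon_0 = \sum_{v \in E^0} v$, and since $n \cdot 1_R$ is invertible for every $n \ne 0$, finiteness of the support forces $\text{tr}_\gamma(1) = \text{tr}_\gamma(\sum_{v \in E^0} v)$ to be invertible. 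Theorem \ref{thm:semisimple} then gives that $L_R(E)$ is semisimple, establishing (e).

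Finally, to close the loop I would observe that $(e) \Rightarrow (c)$ holds trivially, as every semisimple ring is left artinian. Together with the equivalences already proved this produces $(a) \Leftrightarrow (b) \Rightarrow (e) \Rightarrow (c) \Leftrightarrow (a)$ alongside $(a) \Leftrightarrow (d)$, so all five assertions are equivalent. The main obstacle is really localized in Lemma \ref{lem:invertible}, whose somewhat intricate explicit construction of an inverse is what makes Theorem \ref{thm:semisimple} applicable; once that invertibility is secured, the present corollary follows purely by citing and assembling the earlier results.
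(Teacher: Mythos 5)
Your proof is correct, and its core coincides with the paper's: the only genuinely new implication, $(b) \Rightarrow (e)$, is handled exactly as in the paper, by checking the hypotheses of Theorem \ref{thm:semisimple} (finiteness of the support giving $\epsilon_i = 0$ for almost all $i$, semisimplicity of $(L_R(E))_0$, and invertibility of $\mathrm{tr}_\gamma(1)$ supplied by Lemma \ref{lem:invertible}), and the equivalences $(a) \Leftrightarrow (b) \Leftrightarrow (c)$ come from Proposition \ref{prop:lpa_left} together with its Remark, just as in the paper. Where you genuinely diverge is in how assertion $(d)$ enters the cycle. The paper proves $(d) \Rightarrow (c)$ by a detour: Hopkins--Levitzki converts right artinianity of $L_R(E)$ into right noetherianity, Corollary \ref{cor:lpa_noeth} (using that a semisimple $R$ is two-sided noetherian) transfers this to left noetherianity via Condition (NE), and Hopkins--Levitzki is invoked again to return to left artinianity. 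You instead observe that a semisimple $R$ is also \emph{right} artinian, so the right-handed version of Proposition \ref{prop:lpa_left} applies verbatim and yields $(a) \Leftrightarrow (d)$ directly. Your route is more symmetric and more elementary: it avoids Hopkins--Levitzki and the noetherian machinery altogether, and it shows the paper's detour is not logically necessary. What the paper's argument buys is independence from the two-sidedness of Proposition \ref{prop:lpa_left}; it would still work if that proposition were available only in a one-sided form. Both ways of closing the implication cycle are valid, so your proposal stands as a complete proof.
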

\begin{proof}
$(a) \iff (b) \iff (c):$ Proposition \ref{prop:lpa_left}.
 
$(b) \implies (e):$ Assume that $(L_R(E))_0$ is semisimple and $\text{Supp}(L_R(E))$ is finite. By Lemma \ref{lem:invertible} and Theorem \ref{thm:semisimple}, $L_R(E)$ is semisimple.

$(d) \implies (c):$ Assume that $L_R(E)$ is right artinian. By the Hopkins–Levitzki theorem, $L_R(E)$ is left artinian if and only if $L_R(E)$ is left noetherian. Since $R$ is semisimple, it is two-sided noetherian. Hence, since $L_R(E)$ is right noetherian, Corollary \ref{cor:lpa_noeth} implies $L_R(E)$ is also left noetherian. 

$(e) \implies (c), (e) \implies (d):$ Standard consequences of the Artin-Wedderburn theorem. 
\end{proof}

\begin{remark}
As a special case of Corollary \ref{cor:full_semi} we obtain a characterization of Leavitt path algebras over a field of characteristic $0$. This is consistent with Abrams, Aranda Pino and Siles Molina's characterization of artinian and semisimple Leavitt path algebras with coefficients in a field, cf. \cite{ABRAMS2007753,abrams2010chain,abrams2017leavitt}. However, their characterization does not require that the base field has characteristic $0$. In particular, this means that the assumption on the base ring $R$ in Corollary \ref{cor:full_semi} is not a necessary condition. 
\end{remark}

\smallskip

Finally, we will prove Theorem \ref{thm:s}, which is a generalization of Steinberg's characterization of Leavitt path algebras (cf. \cite{steinberg2018chain}). We shall first show that noetherian unital Leavitt path algebras come from finite graphs. 

\begin{lemma}
If $(L_R(E))_0$ is left (right) noetherian, then $E$ is finite and satisfies Condition (NE).
\label{lem:e_finite}
\end{lemma}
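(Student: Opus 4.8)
The plan is to prove the contrapositive componentwise, exploiting throughout the fact (already invoked in the proof of Proposition~\ref{prop:idem_exist}) that a one-sided noetherian ring cannot contain an infinite set of nonzero pairwise orthogonal idempotents. Observe first that nonzero pairwise orthogonal idempotents are automatically distinct: if $e_i = e_j$ while $e_i e_j = 0$, then $e_i = e_i^2 = e_i e_j = 0$, a contradiction. Hence in each case it suffices to exhibit infinitely many \emph{nonzero} orthogonal idempotents lying in $(L_R(E))_0$.

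First I would dispatch Condition (NE). This is immediate from Proposition~\ref{prop:idem_exist}, whose proof nowhere assumes that $E$ is finite: if $E$ fails Condition (NE), that proposition produces an infinite family of orthogonal idempotents in $(L_R(E))_0$, so $(L_R(E))_0$ is neither left nor right noetherian. Thus noetherianity of $(L_R(E))_0$ forces Condition (NE).

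Next I would show $E^0$ is finite. The vertices $\{ v \mid v \in E^0 \}$ are nonzero idempotents in $C_0 \subseteq (L_R(E))_0$, and by relation (a) of Definition~\ref{def:lpa} they are pairwise orthogonal. Therefore, if $E^0$ were infinite, $(L_R(E))_0$ would contain infinitely many nonzero orthogonal idempotents, contradicting the hypothesis; so $E^0$ is finite. Finally I would show $E^1$ is finite. Assuming $E^0$ finite but $E^1$ infinite, the pigeonhole principle yields a vertex $v$ emitting infinitely many distinct edges $f_1, f_2, \dots \in s^{-1}(v)$. Each $f_i f_i^*$ is homogeneous of degree $0$, hence lies in $(L_R(E))_0$, and is idempotent by relations (b) and (c), since $(f_i f_i^*)^2 = f_i (f_i^* f_i) f_i^* = f_i\, r(f_i)\, f_i^* = f_i f_i^*$. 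It is nonzero because $f_i^*(f_i f_i^*) f_i = (f_i^* f_i)(f_i^* f_i) = r(f_i) \neq 0$, and the family is orthogonal because relation (c) gives $f_i^* f_j = \delta_{i,j}\, r(f_i)$, which kills all cross terms $(f_i f_i^*)(f_j f_j^*)$ for $i \neq j$. This again produces infinitely many nonzero orthogonal idempotents in $(L_R(E))_0$, a contradiction; so $E^1$ is finite.

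The argument is essentially routine once Proposition~\ref{prop:idem_exist} is in hand. The only point requiring care is that $(L_R(E))_0$ may be non-unital when $E$ is infinite (the vertices furnish only local units), so I deliberately avoid any use of a global identity and rely solely on the orthogonal-idempotent obstruction, which remains valid for non-unital rings.
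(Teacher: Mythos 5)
Your proof is correct and takes essentially the same approach as the paper's: both reduce everything to the fact that a one-sided noetherian ring has no infinite family of pairwise orthogonal idempotents, using Proposition~\ref{prop:idem_exist} for Condition (NE) and the families $\{ v \}_{v \in E^0}$ and $\{ f f^* \}_{f \in E^1}$ to force $E^0$ and $E^1$ to be finite. Your pigeonhole step is harmless but unnecessary, since $\{ f f^* \}_{f \in E^1}$ is already pairwise orthogonal for arbitrary distinct edges by relation (c) of Definition~\ref{def:lpa}.
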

\begin{proof}
Assume that $(L_R(E))_0$ is left (right) noetherian. If we can prove that $E$ is finite, then the statement follows from Proposition \ref{prop:idem_exist}.

Suppose to get a contradiction that $E$ is not finite. 
If $E^0$ is an infinite set, we can find an infinite sequence of pairwise orthogonal idempotents $\{ v_i \}_{i \in \mathbb{N}}$ in $(L_R(E))_0$ contradicting that $(L_R(E))_0$ is left (right) noetherian. Similarly, if $E^1$ is infinite, it is straightforward to check that $\{ e e^* \}_{e \in E^1}$ is an infinite sequence of orthogonal idempotents. Hence, $E$ is finite.
\end{proof}

Before finishing the proof we recall the following well-known property of group graded rings. Let $G$ be an arbitrary group and let $S=\bigoplus_{g \in G} S_g$ be a $G$-graded ring with principal component $R$. If $S$ is semisimple, then $R$ is semisimple.

\begin{proof}[Proof of Theorem \ref{thm:s}]
$(a):$ Assume that $L_R(E)$ is left (right) noetherian. By Theorem \ref{thm:n}, $(L_R(E))_0$ is left (right) noetherian and hence by Lemma \ref{lem:e_finite}, $E$ is finite and satisfies Condition (NE). Furthermore, by Corollary \ref{cor:ne_art}, $R$ is left (right) noetherian. The converse follows directly from Corollary \ref{cor:lpa_noeth}.

$(b):$ Assume that $L_R(E)$ is left (right) artinian. By Theorem \ref{thm:a}, in particular, $(L_R(E))_0$ is left (right) artinian and therefore also left (right) noetherian. By Lemma \ref{lem:e_finite}, $E$ is finite and satisfies Condition (NE). This means we can apply Corollary \ref{cor:ne_art} to get that $R$ is left (right) artinian. Hence, Proposition \ref{prop:lpa_left} proves $E$ is acyclic. The converse follows directly from Proposition \ref{prop:lpa_left}.

$(c):$ Assume that $L_R(E)$ is semisimple. This implies that $(L_R(E))_0$ is semisimple. On the other hand, $L_R(E)$ is, in particular, left artinian, hence by $(b)$, $E$ is finite acyclic and hence satisfies Condition (NE). Thus, Corollary \ref{cor:ne_art} implies that $R$ is semisimple. The converse follows directly from Corollary \ref{cor:full_semi}. 
\end{proof}

\section{Applications to unital partial crossed products}
\label{sec:5}

Let $R$ be an associative, non-trivial unital ring and let $G$ be a group with neutral element $e$. A \emph{unital twisted partial action of $G$ on $R$} (see \cite[pg. 2]{nystedt2016epsilon}) is a triple, 
\begin{equation*}
( \{\alpha_g \}_{g \in G}, \{ D_g \}_{g \in G}, \{ w_{g,h} \}_{(g,h) \in G \times G}),
\end{equation*}
where for each $g \in G$, the $D_g$'s are unital ideals of $R$, $\alpha_g \colon D_{g^{-1}} \to D_g$ are ring isomorphisms and for each $(g, h) \in G \times G$, $w_{g,h}$ is an invertible element in $D_g D_{g h}$. Let $1_g \in Z(R)$ denote the (not necessarily non-zero) multiplicative identity of the ideal $D_g$. We require that the following conditions hold for all $g, h \in G$:
\begin{itemize}
\item[(P1)] $\alpha_e = {\rm id}_R$;
\item[(P2)] $\alpha_g(D_{g^{-1}} D_h) = D_g D_{gh}$;
\item[(P3)] if $r \in D_{h^{-1}} D_{(gh)^{-1}}$, 
then $\alpha_g ( \alpha_h (r) ) =
w_{g,h} \alpha_{gh}(r) w_{g,h}^{-1}$;
\item[(P4)] $w_{e,g} = w_{g,e} = 1_g$;
\item[(P5)] if $r \in D_{g^{-1}} D_h D_{hl}$, then
$\alpha_g(r w_{h,l}) w_{g,hl} =
\alpha_g(r) w_{g,h} w_{gh,l}$.
\end{itemize}

Given a unital twisted partial action of $G$ on $R$, we can form the \emph{unital partial crossed product} $R \star_\alpha^w G = \bigoplus_{g \in G} D_g \delta_g$ where the $\delta_g$'s are formal symbols. For $g,h \in G, r \in D_g$ and $r' \in D_h$ the multiplication is defined by the rule:
\begin{itemize}
\item[(P6)] $(r \delta_g) (r' \delta_h) = r \alpha_g(r' 1_{g^{-1}}) w_{g,h} \delta_{gh}$.
\end{itemize}
Directly from the definition of the multiplication it follows that $1_R \delta_e$ is the multiplicative identity of $R \star_\alpha^w G$. It can also be proved that $R \star_\alpha^w G$ is an associative $R$-algebra (see \cite[Thm. 2.4]{dokuchaev2008crossed}). 
Moreover, as mentioned in the introduction, Nystedt, Öinert and Pinedo \cite{nystedt2016epsilon} shows that the natural $G$-grading is epsilon-strong. Furthermore, note that the principal component of $R \star_\alpha^\omega G$ can be identified with $R$.

\smallskip

Our characterization of noetherian epsilon-strongly graded rings (Theorem \ref{thm:n}) gives the following generalization of Theorem \ref{thm:partial_skew_groups_noeth}.

\begin{corollary}
If $G$ is a polycyclic-by-finite group, then $R \star_\alpha^\omega G$ is left (right) noetherian if and only if $R$ is left (right) noetherian.
\label{thm:crossed_noeth}
\end{corollary}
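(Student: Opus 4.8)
The plan is to recognize Corollary \ref{thm:crossed_noeth} as an immediate consequence of the general machinery developed earlier in the paper, rather than as a statement requiring fresh work. The unital partial crossed product $R \star_\alpha^\omega G = \bigoplus_{g \in G} D_g \delta_g$ is, as recalled in the paragraph immediately preceding the statement, an epsilon-strongly $G$-graded ring whose homogeneous component of degree $g$ is $D_g \delta_g$. Crucially, its principal component is the degree-$e$ component $D_e \delta_e$, which can be identified with $R$ (since $D_e = R$ by the unitality assumption and (P1)). Thus the hypotheses of Theorem \ref{thm:n} are met as soon as $G$ is polycyclic-by-finite, and the theorem applies verbatim.

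Concretely, I would structure the proof as follows. First, I would invoke the fact, established by Nystedt, Öinert and Pinedo \cite{nystedt2016epsilon} and recalled just above, that the natural $G$-grading on $R \star_\alpha^\omega G$ is epsilon-strong. Second, I would note that the principal component of this grading is (identified with) $R$. Third, with $G$ assumed polycyclic-by-finite, I would apply Theorem \ref{thm:n} directly: it states that an epsilon-strongly $G$-graded ring $S$ over a polycyclic-by-finite group is left (right) noetherian if and only if its principal component $S_e$ is left (right) noetherian. Setting $S = R \star_\alpha^\omega G$ and $S_e = R$ yields exactly the claimed equivalence.

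Honestly, there is no substantive obstacle here: the entire content of the corollary has been front-loaded into Theorem \ref{thm:n} and into the identification of the grading as epsilon-strong. The only minor bookkeeping point worth stating explicitly is the identification of the principal component with $R$, which follows from $D_e = R$ together with the multiplication rule (P6) specializing to the degree-$e$ multiplication. Everything else is a one-line citation of Theorem \ref{thm:n}. The proof should therefore read as little more than: the grading is epsilon-strong, the principal component is $R$, $G$ is polycyclic-by-finite, apply Theorem \ref{thm:n}.

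I would also remark, after the proof, that this genuinely generalizes Theorem \ref{thm:partial_skew_groups_noeth} of Carvalho, Cortes and Ferrero, since partial skew group rings $R \star_\alpha G$ arise as the special case where all twisting elements $w_{g,h}$ are trivial; moreover the present result improves on their statement by providing the full biconditional (both directions) rather than merely the sufficiency of noetherianity of $R$. This comparison is promised in the introduction and is the natural place to deliver it.
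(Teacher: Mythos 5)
Your proposal is correct and follows exactly the paper's own proof: cite Nystedt, Öinert and Pinedo for the epsilon-strong grading of $R \star_\alpha^\omega G$, identify the principal component with $R$, and apply Theorem \ref{thm:n}. Even your closing remark on recovering Theorem \ref{thm:partial_skew_groups_noeth} via the trivial twist mirrors the paper's own remark (which takes $w_{g,h} = 1_g 1_{gh}$).
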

\begin{proof}
Since $R \star_\alpha^\omega G$ is epsilon-strongly $G$-graded (see \cite{nystedt2016epsilon}) by the polycyclic-by-finite group $G$, the statement follows from Theorem \ref{thm:n}.
\end{proof}
\begin{remark}
We show that Theorem \ref{thm:partial_skew_groups_noeth} (cf. \cite[Cor. 3.4]{carvalho2011partial}) follows as a special case of Corollary \ref{thm:crossed_noeth}. Let $\alpha = \{ \alpha_g \colon D_{g^{-1}} \to D_g \}_{g \in G}$ be a partial action of $G$ on $R$ such that each ideal $D_g$ of $R$ is unital. Taking $w_{g,h}=1_g 1_{gh}$ it becomes a unital twisted partial action. The above theorem yields the required statement for the skew group ring $R \star_\alpha G$. 
\end{remark}

\smallskip

Applying our characterization of artinian epsilon-strongly graded rings (Theorem \ref{thm:a}) we obtain the following:

\begin{corollary}
Let $G$ be a torsion-free group. Then $R \star_\alpha^\omega G$ is left (right) artinian if and only if $R$ is left (rigth) artinian and $D_g = \{ 0 \}$ for all but finitely many $g \in G$. 
\label{thm:crossed_artinian}
\end{corollary}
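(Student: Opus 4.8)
The plan is to recognize that Corollary \ref{thm:crossed_artinian} is an immediate application of our general characterization of artinian epsilon-strongly graded rings (Theorem \ref{thm:a}) to the special case where the epsilon-strongly $G$-graded ring is a unital partial crossed product $S = R \star_\alpha^\omega G$. The key structural facts, which are all established earlier in the excerpt, are that $S = \bigoplus_{g \in G} D_g \delta_g$ carries a natural $G$-grading whose $g$-component is $S_g = D_g \delta_g$, that this grading is epsilon-strong (by \cite{nystedt2016epsilon}), and that the principal component $S_e = D_e \delta_e$ can be identified with $R$ (since $D_e = R$ by (P1) and (P2)). With these identifications in hand, the proof is essentially a translation of the hypotheses of Theorem \ref{thm:a} into the crossed-product language.

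First I would fix the group $G$ to be torsion-free, exactly matching the hypothesis of Theorem \ref{thm:a}, and observe that $S = R \star_\alpha^\omega G$ is an epsilon-strongly $G$-graded ring with principal component identified with $R$. Then I would simply invoke Theorem \ref{thm:a}: $S$ is left (right) artinian if and only if $R = S_e$ is left (right) artinian and $S_g = \{0\}$ for all but finitely many $g \in G$. The only remaining point is the elementary observation that the homogeneous component $S_g = D_g \delta_g$ vanishes precisely when $D_g = \{0\}$, since $\delta_g$ is a formal symbol and the map $r \mapsto r \delta_g$ is an additive bijection from $D_g$ onto $S_g$. Hence the condition ``$S_g = \{0\}$ for all but finitely many $g$'' is equivalent to ``$D_g = \{0\}$ for all but finitely many $g$,'' which yields exactly the stated characterization.

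I do not anticipate any genuine obstacle here: the entire content of the corollary has already been packaged into Theorem \ref{thm:a}, and this proof is a direct specialization. The most delicate (though still routine) point is the bookkeeping identification of the homogeneous components of the crossed product with the ideals $D_g$, together with the identification of the principal component with $R$; once these are noted, the result follows in a single line. Accordingly, the proof should read essentially as: ``Since $R \star_\alpha^\omega G$ is epsilon-strongly $G$-graded with $S_g = D_g \delta_g$ and principal component identified with $R$, and $G$ is torsion-free, the statement follows immediately from Theorem \ref{thm:a}, upon noting that $S_g = \{0\}$ if and only if $D_g = \{0\}$.''
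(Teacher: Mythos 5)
Your proposal is correct and matches the paper's own proof exactly: both invoke the fact from \cite{nystedt2016epsilon} that $R \star_\alpha^\omega G$ is epsilon-strongly $G$-graded with principal component identified with $R$, and then apply Theorem \ref{thm:a} directly. The identifications $S_g = D_g\delta_g$ and $S_g = \{0\} \iff D_g = \{0\}$ that you spell out are left implicit in the paper, but the argument is the same.
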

\begin{proof}
Since $R \star_\alpha^\omega G$ is epsilon-strongly $G$-graded (see \cite{nystedt2016epsilon}) by the torsion-free group $G$, the statement follows from Theorem \ref{thm:a}.
\end{proof}

\begin{remark}
Note that Passman's example \cite{passman1970radicals} of an artinian twisted group ring by an infinite $p$-group shows that Corollary \ref{thm:crossed_artinian} does not hold for an arbitrary group $G$. 
\end{remark}

\section*{acknowledgement}
The author is grateful to Johan Öinert, Stefan Wagner and Patrik Nystedt for giving comments and feedback that helped to improve this manuscript.

\bibliographystyle{plain}
\bibliography{chain}

\end{document}